\numberwithin{equation}{section}
\newtheorem{thm}{Theorem}[section]
\newtheorem*{theorem*}{Theorem}
\newtheorem{theorem}{Theorem}
\newtheorem{lemma}[thm]{Lemma}
\newtheorem{corollary}[thm]{Corollary}
\newtheorem{proposition}[thm]{Proposition}
\theoremstyle{definition}
\newtheorem{remark}[thm]{Remark}
\newtheorem{definition}[thm]{Definition}
\newtheorem{claim}[thm]{Claim}
\newtheorem{assumptions}[thm]{Assumptions}
\newtheorem{defn-thm}[thm]{Definition-Theorem}
\newcommand{\Teich}{\mathcal{T}}
\newcommand{\Sym}{\mathrm{Sym}}
\newcommand{\Curve}{\mathcal{V}}
\newcommand{\Mod}{\mathrm{Mod}}
\title{Constant energy families of harmonic maps}
\author{Ognjen To\v{s}i\'c}
\begin{document}
\maketitle

\begin{abstract}
    For a negatively curved manifold $M$ and a continuous map $\psi:\Sigma\to M$ from a closed surface $\Sigma$, we study complex submanifolds of Teichm\"uller space $\mathcal{S}\subset\Teich(\Sigma)$ such that the harmonic maps $\{h_X:X\to M\text{ for }X\in\mathcal{S}\}$ in the homotopy class of $\psi$ all have equal energy. When $M$ has negative Hermitian sectional curvature, we show that for any such $\mathcal{S}$, there exists a closed Riemann surface $Y$, such that any $h_X$ for $X\in\mathcal{S}$ factors as a holomorphic map $\phi_X:X\to Y$ followed by a fixed harmonic map $h:Y\to M$. This answers a question posed by both Toledo and Gromov. As a first application, we show a factorization result for harmonic maps from normal projective varieties to $M$. As a second application, we study homomorphisms from finite index subgroups of mapping class groups to $\pi_1(M)$.
\end{abstract}

\section{Introduction}
Let $M$ be a manifold with negative sectional curvatures, and $\psi:\Sigma_g\to M$ be a continuous map from a closed surface of genus $g$, denoted $\Sigma_g$. Then given any marked Riemann surface $f:\Sigma_g\to X$ in Teichm\"uller space $\mathcal{T}_g$, there exists a unique harmonic map $h:X\to M$ in the homotopy class of $\psi\circ f^{-1}$. The energy of the map $h$ then defines a function $\mathrm{E}_\psi:\mathcal{T}_g\to \mathbb{R}$. It was shown by Toledo \cite{Toledo2011} that when $M$ has strictly negative Hermitian sectional curvature, $\mathrm{E}_\psi$ is plurisubharmonic, and that it is strictly plurisubharmonic when the harmonic map $h$ is an immersion.
\par In the converse direction, Schiffer variations provide examples of complex submanifolds of $\Teich_g$ on which $\mathrm{E}_\psi$ is constant. Given a fixed Riemann surface $Y$ marked by $\Sigma_{\tilde{g}}$, and a ramified covering map $p:\Sigma_g\to\Sigma_{\tilde{g}}$ branched over some finite set $B\subset\Sigma_{\tilde{g}}$, by moving the branch set $B$ on $Y$, we obtain a complex submanifold $\mathrm{Schiff}_{Y, p}\subset\mathcal{T}_g$, consisting of marked Riemann surfaces that all admit a holomorphic map to $Y$ in the homotopy class of $p$. It is then easy to see that $\mathrm{E}_{\psi\circ p}$ is constant on $\mathrm{Schiff}_{Y,p}$, for an arbitrary continuous map $\psi:\Sigma_{\tilde{g}}\to M$. 
\par A natural question, posed by both Toledo \cite[Remark 3]{Toledo2011} and Gromov \cite[Remarks (a), \S 4.6, pp. 53]{Gromov2012}, is do all examples of families of homotopic harmonic maps with constant energy arise in this way? Our main result answers in the affirmative.
\begin{theorem}\label{thm:inframain}
    Let $\mathcal{S}\subset\Teich_g$ be a connected complex submanifold of Teichm\"uller space, and let $\psi:\Sigma_g\to M$ be a continuous map into a convex cocompact Riemannian manifold $M$ with negative Hermitian sectional curvature, such that $\psi$ is not homotopic into a curve. Then, if $\mathrm{E}_\psi$ is constant on $\mathcal{S}$, there exist a closed Riemann surface $Y$ marked by $\Sigma_h$, and a continuous map $p:\Sigma_g\to\Sigma_h$, such that any $X\in\mathcal{S}$ admits a holomorphic map to $Y$ homotopic to $p$, and such that $\psi\simeq \theta\circ p$ for some continuous map $\theta:\Sigma_h\to M$. 
\end{theorem}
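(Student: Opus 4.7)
The starting point is the first variation formula for the energy, namely $dE_\psi(X)(\mu) = -2\,\mathrm{Re}\int_X \Phi(h_X)\cdot\mu$, where $\Phi(h_X)\in H^0(X,K_X^{\otimes 2})$ is the Hopf differential of the harmonic map $h_X$, together with Toledo's expression for the Hessian $\partial\bar\partial E_\psi$ as a sum of nonnegative curvature-type integrands (this is how Toledo established plurisubharmonicity under negative Hermitian sectional curvature). The hypothesis that $E_\psi$ is constant on a \emph{complex} submanifold $\mathcal{S}$ thus gives, for every $X\in\mathcal{S}$ and every $\mu\in T_X\mathcal{S}$ represented by a harmonic Beltrami differential, two pieces of data: (i) the pairing $\int_X\Phi(h_X)\cdot\mu$ vanishes, and, more importantly, (ii) the curvature integrand appearing in $\partial\bar\partial E_\psi(\mu,\bar\mu)$ vanishes \emph{pointwise} on $X$.

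The plan is to convert the pointwise identities in (ii) into an actual factorization. Using the real analyticity of $M$, and hence of every $h_X$, one can invoke unique continuation to propagate any identity from a small open set where the calculation is transparent to all of $X$. The Hermitian sectional curvature condition, together with the vanishing of the Siu-type integrand, should force a dimension collapse on $\partial h_X$: at generic points the image of $\partial h_X$ must be isotropic in a strong sense. Standard bochner/Sampson--Siu--Carlson--Toledo technology then produces, for each $X$, a holomorphic map $\phi_X\colon X\to Y_X$ to a Riemann surface and a harmonic $h_{Y_X}\colon Y_X\to M$ with $h_X=h_{Y_X}\circ\phi_X$; the hypothesis that $\psi$ is not homotopic into a curve excludes the degenerate case where $h_X$ itself has one-dimensional image.

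The second main task is to make the target $(Y_X,h_{Y_X})$ independent of $X\in\mathcal{S}$. For this I would look at the image $h_X(X)\subset M$: convex cocompactness yields a convex core, real analyticity gives that the subvariety generated by this image is a well-defined real analytic subset of $M$ whose dimension is locally constant in $X$, and the harmonic map from $Y_X$ to $M$ is determined by its image together with the marking. Connectedness of $\mathcal{S}$ then forces a single harmonic map $h\colon Y\to M$ and a holomorphically varying family $\{\phi_X\}_{X\in\mathcal{S}}$. A continuity argument, using the connectedness of $\mathcal{S}$ and the discreteness of homotopy classes, produces a single continuous $p\colon\Sigma_g\to\Sigma_h$ to which every $\phi_X$ is homotopic, and the factorization $\psi\simeq\theta\circ p$ with $\theta=h$ follows.

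The main obstacle I anticipate is the first step, extracting a factorization of $h_X$ through a Riemann surface from the vanishing of the Hessian of $E_\psi$ in the directions of $T_X\mathcal{S}$ alone. The classical Siu/Carlson--Toledo results produce such factorizations when the harmonic map comes from a K\"ahler manifold of arbitrary dimension, so the pluriharmonic identities hold in \emph{all} directions; here $\mathcal{S}$ may be very low-dimensional and one only controls the identity along a restricted set of Beltrami differentials. Bridging this gap, presumably by combining real analyticity, the Hermitian sectional curvature bound, and a careful pointwise analysis of the curvature integrand, is where the technical work of the argument should concentrate.
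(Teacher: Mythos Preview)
Your outline has the right high-level shape, but the first step contains a genuine gap that you yourself flag without resolving. The Siu--Carlson--Toledo factorization applies to pluriharmonic maps from \emph{higher-dimensional} K\"ahler manifolds: a harmonic map $h_X\colon X\to M$ from a single Riemann surface is automatically pluriharmonic and automatically has $\dim_\mathbb{C}\partial h_X(T^{1,0}X)\leq 1$, so no nontrivial factorization can be extracted from identities on $X$ alone. The vanishing of $\partial\bar\partial E_\psi(\mu,\bar\mu)$ is a statement about the \emph{variation} of $h_X$ in the direction $\mu$, not a pointwise constraint on a fixed $h_X$; it does not produce a $Y_X$ through which $h_X$ factors. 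Your proposed fix (``careful pointwise analysis of the curvature integrand'') cannot succeed at the level of a single fibre, because a generic harmonic map from a Riemann surface to $M$ simply does not factor through anything.

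The paper's remedy is to change the domain: it studies the combined map $F_\psi\colon\mathcal{V}_\mathcal{S}\to M$ on the \emph{total space} of the universal curve over $\mathcal{S}$, a complex manifold of dimension $\dim_\mathbb{C}\mathcal{S}+1\geq 2$, and proves (Proposition~\ref{prop:energy-constant-iff-ph}) that constancy of $E_\psi$ on $\mathcal{S}$ is equivalent to $F_\psi$ being pluriharmonic with $\mathrm{rank}_\mathbb{C}\partial F_\psi\leq 1$ on $\mathcal{V}_\mathcal{S}$. Now the Siu--Sampson mechanism has teeth: one obtains a holomorphic codimension-one foliation $\mathcal{F}$ on $\mathcal{V}_\mathcal{S}$ (off a small set) tangent to $\ker\partial F_\psi$ and transverse to the fibres, and it is this foliation --- not any fibrewise data --- that links the different $X_t$. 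The quotient $Y_t=X_t/{\sim}$ is defined via germs of biholomorphisms of $X_t$ preserving $F_\psi$ (real analyticity of $h_{X_t}$ is used here to control accumulation of such germs, Lemma~\ref{lm:real-analytic}), and the foliation $\mathcal{F}$ is what shows that $Y_t$ maps holomorphically to $Y_s$ for nearby $s$ (Lemma~\ref{lm:semicontinuity}). Finally, constancy of $Y_t$ is upgraded from an open subset to all of $\mathcal{S}$ not via the image $h_X(X)\subset M$ as you propose, but via the independently-established fact that the Schiffer varieties $\Lambda_{Y,p}\subset\mathcal{T}_g$ are \emph{closed} complex submanifolds (Proposition~\ref{prop:schiffer}, proved through the non-abelian Hodge correspondence) combined with a Baire category argument.
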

Note that in the equivariant case, an example of constant energy families of harmonic maps was provided by Deroin and Tholozan \cite[Theorem 5]{Deroin2019}.
\par In the setting of Theorem \ref{thm:inframain}, for any $X\in\Teich_g$, the harmonic map $h:X\to M$ homotopic to $\psi$ factors as $h=h_\theta\circ h_p$, where $h_p:X\to Y$ is the holomorphic map homotopic to $p$, and $h_\theta:Y\to M$ is the harmonic map homotopic to $\theta$. More precisely, Theorem \ref{thm:inframain} is essentially equivalent to a factorization result for holomorphic fibrations with (complex) one-dimensonal fibres. 
\begin{definition}\label{dfn:admissible}
    We call a holomorphic map $p:E\to B$ between connected complex manifolds an \textit{admissible fibration} if there exists a proper analytic subset $A\subsetneq B$ such that $p:E\setminus p^{-1}(A)\to B\setminus A$ is a holomorphic submersion with connected, closed and (complex) one-dimensional fibres, such that $p:E\setminus p^{-1}(A)\to B\setminus A$ is topologically a product.
\end{definition}
\begin{theorem}\label{thm:supermain}
    Let $M$ be a convex cocompact Riemannian manifold with negative Hermitian sectional curvature, and $p:E\to B$ be an admissible fibration. Then for any smooth map $F:E\to M$ transverse to $p$ such that the maps $F_b=F|_{p^{-1}(b)}:p^{-1}(b)\to M$, for $b\in B\setminus A$, are all harmonic of equal energy, we have either 
    \begin{enumerate}
        \item $F$ has image in a closed geodesic in $M$, or 
        \item $F$ factors as $h\circ\phi$, where $\phi:E\to Y$ is a holomorphic map to a closed hyperbolic Riemann surface $Y$ transverse to $p$, and $h:Y\to M$ is a harmonic map.
    \end{enumerate}
\end{theorem}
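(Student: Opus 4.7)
The plan is to reduce Theorem \ref{thm:supermain} to Theorem \ref{thm:inframain} via the classifying map of the admissible fibration, treating the degenerate case separately. Fix $b_0\in B\setminus A$ together with a marking $\sigma_0:\Sigma_g\to p^{-1}(b_0)$, where $g$ is the genus of the fibre; since $p$ is topologically a product over $B\setminus A$, one extends $\sigma_0$ coherently to markings $\sigma_b:\Sigma_g\to p^{-1}(b)$ for all $b\in B\setminus A$, obtaining a holomorphic classifying map $\tau:B\setminus A\to\Teich_g$ and a distinguished free homotopy class $[\psi]$ of maps $\Sigma_g\to M$, namely that of $F\circ\sigma_b$. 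Under this dictionary the fibrewise map $F_b$ is the harmonic representative of $\psi$ on the marked Riemann surface $\tau(b)$, and the hypothesis that the $F_b$ all have equal energy is equivalent to $\mathrm{E}_\psi\circ\tau$ being constant on $B\setminus A$.

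If $\psi$ is homotopic into a closed curve of $M$, then on every $X=\tau(b)$ the harmonic representative $F_b$ factors through the unique closed geodesic $\gamma\subset M$ in that free homotopy class; since $F$ is continuous and $E$ is connected, $F(E)\subset\gamma$, yielding conclusion (1). Otherwise, one works locally: at each $b\in B\setminus A$ select a connected complex submanifold $\mathcal{S}\subset\Teich_g$ through $\tau(b)$ containing $\tau(U)$ for some neighborhood $U$ of $b$ (for instance the smooth locus of $\tau(B\setminus A)$). Theorem \ref{thm:inframain} then produces a closed Riemann surface $Y$ of some genus $h$, a continuous map $\pi:\Sigma_g\to\Sigma_h$, and $\theta:\Sigma_h\to M$ with $\psi\simeq\theta\circ\pi$, together with holomorphic maps $\phi_X:X\to Y$ homotopic to $\pi$ for every $X\in\mathcal{S}$. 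Transporting through $\tau$, we obtain, for every $b$ in a neighborhood of $b_0$, a holomorphic map $\phi_b:p^{-1}(b)\to Y$ satisfying $F_b=h_\theta\circ\phi_b$, where $h_\theta:Y\to M$ is the harmonic representative of $\theta$.

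The key step is to globalize these fibrewise data. Since $\theta$ is not homotopic into a curve, $h_\theta$ is generically a local embedding, so the equation $F_b=h_\theta\circ\phi_b$ determines $\phi_b$ up to finitely many deck symmetries of $Y$. This canonicity forces the Riemann surface $Y$ and the homotopy class of $\pi$ to be independent of the local choice of $\mathcal{S}$, so the $\phi_b$ patch into a well-defined smooth map $\phi:p^{-1}(B\setminus A)\to Y$ that is fibrewise holomorphic. Holomorphicity of $\phi$ in the transverse directions then follows from the holomorphic dependence of the factorization $\phi_X$ on $X\in\mathcal{S}$, which can be argued either by a parametrized version of Theorem \ref{thm:inframain} or by a direct uniqueness argument for the factorization of $F_b$ through $h_\theta$. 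Finally, since $Y$ is compact and $p^{-1}(A)$ is a proper analytic subset of $E$, the map $\phi$ extends holomorphically across $p^{-1}(A)$ by a Riemann extension theorem; the identity $F=h_\theta\circ\phi$ persists by continuity on all of $E$, and $\phi$ inherits transversality to $p$ from $F$, giving conclusion (2).

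The principal obstacle is precisely this globalization. One must verify that the Riemann surface $Y$ and the factorization produced by Theorem \ref{thm:inframain} are sufficiently canonical to align across different local choices of $\mathcal{S}$, and then promote fibrewise holomorphicity to holomorphicity on all of $E$. Canonicity rests on the rigidity of harmonic maps into $M$ afforded by negative Hermitian sectional curvature together with the real analyticity of $M$, which must be used to rule out the existence of genuinely inequivalent factorizations $F_b=h\circ\phi_b$ once $\theta$ is not homotopic into a curve. The analogous analyticity and curvature input, combined with convex cocompactness to keep harmonic maps bounded, is what permits the extension across the discriminant $p^{-1}(A)$ in the final step.
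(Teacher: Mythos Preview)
Your outline follows the same reduction to Theorem \ref{thm:inframain} as the paper, but the globalization step you correctly identify as the ``principal obstacle'' is exactly where the paper's argument is different and substantially simpler. The paper does \emph{not} apply Theorem \ref{thm:inframain} locally and then attempt to patch. Instead it first proves a lemma (Lemma \ref{lm:simply-connected-base-factor}) showing that $F=F_\psi\circ\iota$ holds \emph{globally} on $E\setminus p^{-1}(A)$, where $\iota:E\setminus p^{-1}(A)\to\Curve_g$ is the classifying map and $F_\psi:\Curve_g\to M$ is the universal fibrewise harmonic map in the class of $\psi$. The proof uses that the Jacobi operator of a harmonic map into a negatively curved target is injective, together with a constant-rank argument for $\iota$, to verify the identity on an open dense set and then extend by continuity. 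Once $F=F_\psi\circ\iota$ globally, $\mathrm{E}_\psi$ is constant on $\iota(B\setminus A)$; Theorem \ref{thm:inframain} combined with Proposition \ref{prop:schiffer} then places $\iota(B\setminus A)$ in a single closed Schiffer variety $\Lambda=\Lambda_{Y,q}\subset\Teich_g$ on which there is already a \emph{global} holomorphic map $\phi:\Curve_\Lambda\to Y$ with $F_\psi=h\circ\phi$. Composing with $\iota$ yields the factorization on $E\setminus p^{-1}(A)$ with no local-to-global argument required.

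Your proposed patching via canonicity of $(Y,\pi)$ is not wrong in spirit, but as written it is incomplete. The assertion that $F_b=h_\theta\circ\phi_b$ determines $\phi_b$ ``up to finitely many deck symmetries of $Y$'' does not address the real difficulty, which is that different local submanifolds $\mathcal{S}$ might a priori produce different targets $Y$ and different $h_\theta$; you would need to invoke the maximality of the quotient $Y_t=X_t/\!\sim$ built in \S\ref{subsec:holomorphic-equivalence-relation} to see that $Y$ is intrinsic to $\psi$ and $t$, and then argue constancy as in \S\ref{subsec:semicontinuity}--\S\ref{subsec:finishing-thm-inframain}. The paper absorbs all of this into the proof of Theorem \ref{thm:inframain} and into the global structure of Schiffer varieties, so that nothing further is needed at the level of $E$. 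Finally, for the extension across $p^{-1}(A)$ the paper uses the Schwarz lemma (available because $Y$ is hyperbolic) to check Andreotti's meromorphicity condition and then applies \cite[Theorem 4]{Andreotti1960}; your appeal to ``a Riemann extension theorem'' needs the same hyperbolicity input made explicit, since an arbitrary holomorphic map to a compact target does not extend across an analytic set of codimension one.
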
 
\subsection{Factorization of harmonic maps}\label{subsec:intro-ph}
As an application of Theorem \ref{thm:supermain}, we show the following.
\begin{theorem}\label{thm:ph-smooth-projective}
    Let $X$ be a normal projective variety equipped with the structure of a K\"ahler space, and let $M$ be a convex cocompact Riemannian manifold with negative Hermitian sectional curvature. Then for any non-constant continuous map $f:X\to M$, harmonic on the smooth locus of $X$, we have either 
    \begin{enumerate}
        \item $f$ is a pluriharmonic map to a closed geodesic in $M$, or 
        \item $f$ factors as $h\circ\phi$, where $\phi:X\to Y$ is a holomorphic map to a closed hyperbolic Riemann surface $Y$, and $h:Y\to M$ is harmonic.
    \end{enumerate}
\end{theorem}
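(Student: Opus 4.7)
The plan is to reduce Theorem~\ref{thm:ph-smooth-projective} to Theorem~\ref{thm:supermain} by slicing $X$ with a covering family of smooth curves. First, resolve singularities: let $\pi:\tilde{X}\to X$ be a Hironaka resolution and set $\tilde{f}=f\circ\pi$. Since $\tilde{f}$ is continuous, has finite energy, and targets the nonpositively curved $M$, the standard removable-singularities theorem across the exceptional divisor promotes $\tilde{f}$ to a harmonic map on all of the smooth compact K\"ahler manifold $\tilde{X}$. It thus suffices to prove the theorem for $\tilde{f}$ and check that the conclusions descend to $X$. By the Siu--Sampson theorem, $\tilde{f}$ is pluriharmonic and $\mathrm{rank}_\mathbb{R}\, d\tilde{f}\le 2$ pointwise.

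If the maximal rank is $0$, $f$ is constant, contradicting the hypothesis. If it is $1$, real analyticity of $\tilde{f}$ combined with the rank bound confines its image locally to a geodesic; analytic continuation together with compactness places the image in a closed geodesic, giving case~(1) (pluriharmonicity is automatic). Otherwise the maximal rank is $2$, and I produce an admissible fibration as follows. Fix a projective embedding $\tilde{X}\hookrightarrow\mathbb{P}^N$ and form, for $b$ in a Zariski-open subset $B$ of a product of dual projective spaces, the smooth connected complete-intersection curve $C_b=\tilde{X}\cap H_1^b\cap\cdots\cap H_{n-1}^b$ with $n=\dim\tilde{X}$ (Bertini supplies smoothness and connectedness). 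The incidence variety $p:E=\{(x,b):x\in C_b\}\to B$ is a proper holomorphic submersion with connected one-dimensional fibers, topologically a product over any contractible subset of $B$ by Ehresmann, so after shrinking $B$ it becomes an admissible fibration in the sense of Definition~\ref{dfn:admissible}.

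Let $F:E\to M$ denote the pullback of $\tilde{f}$. Pluriharmonicity descends under restriction to complex submanifolds, so $F|_{C_b}$ is harmonic for every $b$. Moreover, the Sampson identity shows that the real $(1,1)$-form on $\tilde{X}$ obtained from the $(1,1)$-part of the pulled-back metric is $d$-closed, and its period on a curve $C_b$ equals the energy of $\tilde{f}|_{C_b}$; since all $C_b$ are homologous (being intersections with linearly equivalent divisors), these energies coincide. Transversality of $F$ to $p$ follows from the generic-rank-$2$ condition on $\tilde{f}$. Theorem~\ref{thm:supermain} now applies: its first alternative forces $F$, and hence $\tilde{f}$ (since the $C_b$ cover $\tilde{X}$), into a closed geodesic of $M$, yielding case~(1). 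Its second alternative supplies a factorization $F=h\circ\phi$ with $\phi:E\to Y$ holomorphic, $Y$ a closed hyperbolic Riemann surface, and $h:Y\to M$ harmonic. To descend $\phi$ to $\tilde{X}$, fix $x\in\tilde{X}$ and observe that $\phi(x,\cdot)$ takes values in the discrete set $h^{-1}(\tilde{f}(x))$ on the connected parameter space $\{b\in B:C_b\ni x\}$, so it is constant; this defines the desired holomorphic $\tilde{\phi}:\tilde{X}\to Y$. Since exceptional fibers of $\pi$ are covered by rational curves, whose image in the hyperbolic $Y$ must collapse to a point, $\tilde{\phi}$ descends to $\phi:X\to Y$, producing case~(2).

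\textbf{Main obstacle.} The heart of the argument is the constant-energy claim for the family $\{F|_{C_b}\}$: one must produce a $d$-closed real $(1,1)$-form on $\tilde{X}$ whose period on each curve computes the harmonic-map energy. Because $M$ is only Riemannian (not K\"ahler), this requires the full Sampson--Siu identity applied to pluriharmonic maps, together with some care in identifying the period with the fiber energy (rather than a multiple thereof). A secondary delicate point is the bimeromorphic descent of $\phi$ through both the multiparameter family $E\to\tilde{X}$ and the resolution $\pi$; this relies on Kobayashi hyperbolicity of $Y$ and on generic discreteness of fibers of $h$ to rigidify the factorization.
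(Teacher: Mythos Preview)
Your overall architecture matches the paper's: resolve singularities, invoke Siu--Sampson, cover by a family of linear-section curves, feed this into Theorem~\ref{thm:supermain}, and then descend. Your constant-energy argument is in fact cleaner than the paper's. The paper deduces constancy from the ``if'' direction of Proposition~\ref{prop:energy-constant-iff-ph}; you instead observe that for a pluriharmonic map $u$ the real $(1,1)$-form $\eta=i\, g_{ij}\,\partial u^i\wedge\bar\partial u^j$ is $d$-closed (since $d^\nabla\partial u=d^\nabla\bar\partial u=0$) and that $\int_{C_b}\eta$ is a fixed multiple of the energy of $u|_{C_b}$. Because the $C_b$ are homologous, this settles constancy without any of the variational machinery of \S\ref{sec:ph}.

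However, there is a genuine gap at the step ``after shrinking $B$ it becomes an admissible fibration.'' Definition~\ref{dfn:admissible} requires the fibration to be \emph{topologically a product} over $B\setminus A$. The Bertini locus you remove is a hypersurface, so $B\setminus A$ has nontrivial fundamental group and the monodromy of the family of curves is the usual Lefschetz monodromy, generated by Dehn twists---decidedly nontrivial. If instead you shrink $B$ to a contractible analytic ball, the fibration becomes trivial, but then the curves $C_b$ no longer cover $\tilde X$: already for $\tilde X=\mathbb P^2$ and $C_b$ a line, a small ball of lines sweeps out only a tubular neighbourhood of one line. You invoke both properties (``the $C_b$ cover $\tilde X$'' and ``admissible fibration'') in the same breath, and they are incompatible. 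The paper confronts exactly this issue by passing to the universal cover $\tilde B\to B\setminus A$ before applying Theorem~\ref{thm:supermain}; but then the fibres of $q^*E\to\tilde X$ are no longer connected, and your descent argument (``$\phi(x,\cdot)$ is constant on the connected parameter space $\{b:C_b\ni x\}$'') breaks. The paper's substitute is a two-stage descent: De Franchis gives a finite-index subgroup $\Gamma\leq\pi_1(B\setminus A)$ under which $\tilde\phi$ is invariant, one then builds a map to $\mathrm{Sym}^{[\pi_1:\Gamma]}Y$, shows it has generic rank one, and normalises its image. This is the content of Claims~\ref{claim:cd-props} and~\ref{claim:hatphi-rank-one}, and it is not bypassed by your argument.

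A minor point: you assert that $h^{-1}(\tilde f(x))$ is discrete. It need not be---a non-constant real-analytic harmonic map $Y\to M$ can have one-real-dimensional fibres. What is true (and what the paper uses in \S\ref{subsec:pf-claim-cd-props}) is that the fibre has empty interior, so a holomorphic map from a connected manifold into it is constant. Your conclusion survives, but the justification should be rephrased.
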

When $X$ is a compact K\"ahler manifold, and when the universal cover of $M$ is a (real) hyperbolic space $\mathbb{H}^d$, this is the result of Carlson--Toledo \cite[Theorem 7.2 (a)]{Carlson1989}. Since in our case, the variety $X$ is allowed to have mild singularities, we work in the setting of K\"ahler spaces (for precise definitions, the reader can consult \cite{Varouchas1989}).
\par Since we make use of Theorem \ref{thm:supermain}, we require the variety we start with to have many (complex) one-dimensional submanifolds. In the smooth projective case, this is easily obtained through a modification of Bertini's theorem (see \S\ref{subsec:bertini}). Using Hironaka's resolution of singularities \cite{Hironaka1964,Hironaka1964a} this can be extended to arbitrary projective varieties.
\subsection{Mapping class groups} As a second application of Theorem \ref{thm:supermain}, we consider virtual properties of mapping class groups. We denote by $\Mod_{g,n}$ the pure mapping class group of a surface $\Sigma_{g,n}$ of genus $g$ with $n$ punctures. Recall the Birman exact sequence \cite[\S 4.2]{Farb2011}
\begin{align*}
    1\to \pi_1(\Sigma_{g,n})\stackrel{\iota}{\longrightarrow}\Mod_{g,n+1}\stackrel{\mathcal{F}}{\longrightarrow}\Mod_{g,n}\to 1.
\end{align*}
Here $\mathcal{F}$ is the forgetful homomorphism obtained by filling in a puncture, and $\iota$ realizes $\pi_1(\Sigma_{g,n})$ as {the point-pushing subgroup} $\Pi_{g,n}\leq\Mod_{g,n+1}$.
\begin{definition}
    Given a subgroup $\Gamma\leq\Mod_{g,n+1}$ and a group $G$, a homomorphism $\phi:\Gamma\to G$ is called \textit{strongly point-pushing} if $\phi(\Gamma\cap\Pi_{g,n})$ is not trivial or cyclic.
\end{definition}
\begin{theorem}\label{thm:mcg}
    Let $M$ be a convex cocompact Riemannian manifold with negative Hermitian sectional curvature, and let $\Gamma\leq\Mod_{g,n+1}$ be a finite index subgroup with $g\geq 3, n\geq 0$. Then for any strongly point-pushing homomorphism $\phi:\Gamma\to\pi_1(M)$, 
    \begin{enumerate}
        \item\label{item:geometry}there exists a closed hyperbolic Riemann surface $Y$, such that for any marked Riemann surface $X\in\Teich_{g,n}$, its cover $\hat{X}$ that corresponds to the subgroup $\iota^{-1}(\Pi_{g,n}\cap\Gamma)$ admits a non-constant holomorphic map to $Y$, and 
        \item\label{item:topology} there exist a finite index subgroup $\Theta\leq\Gamma$ and a homomorphism $\theta:\Theta\to\pi_1(Y)$ such that $\phi|_{\Theta}$ factors through $\theta$.
    \end{enumerate}
\end{theorem}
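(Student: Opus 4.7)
The plan is to apply Theorem \ref{thm:supermain} to the Teichm\"uller family arising from the Birman exact sequence. After replacing $\Gamma$ by a torsion-free normal finite-index subgroup (still called $\Gamma$), one forms the holomorphic fibration
\[p:E:=\Teich_{g,n+1}/\Gamma\longrightarrow B:=\Teich_{g,n}/\mathcal{F}(\Gamma),\]
whose fiber over $[X]\in B$ is the cover $\hat{X}=\tilde{X}/\iota^{-1}(\Pi_{g,n}\cap\Gamma)$ named in the theorem. Since $E$ is a $K(\Gamma,1)$, the homomorphism $\phi$ determines a continuous map $F:E\to M$ up to homotopy; I would adjust $F$ so that on each fiber it restricts to the unique harmonic representative $h_X:\hat{X}\to M$ of the class prescribed by $\phi|_{\Pi_{g,n}\cap\Gamma}$. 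The strongly point-pushing hypothesis guarantees $\phi(\Pi_{g,n}\cap\Gamma)$ is non-cyclic, so every $h_X$ has non-elementary image and in particular cannot have image contained in a closed geodesic of $M$.

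The central task is to verify the hypotheses of Theorem \ref{thm:supermain} for $p:E\to B$ and $F:E\to M$. When $n=0$ the fibers are already closed complex curves; when $n\ge 1$ one has to fill in the punctures of $\hat{X}$ and pass to a still deeper finite-index subgroup so that the puncture loops lie in $\ker\phi$, ensuring that $F$ extends continuously across the filled-in divisorial sections. The admissible fibration structure over the smooth locus then follows from Ehresmann's theorem applied to the universal family. The nontrivial input is that the fiberwise energy $[X]\mapsto E(h_X)$ is constant on $B$: it is plurisubharmonic on $B$ by Toledo's theorem, and I would extend it upper semi-continuously to a suitable modification of the Deligne--Mumford compactification, showing that the only boundary strata on which the energy remains bounded are those where the vanishing cycle has trivial $\phi$-image. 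On the resulting compact connected complex space the maximum principle for plurisubharmonic functions then forces the energy to be constant.

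With constant fiberwise energy in hand, Theorem \ref{thm:supermain} produces a factorization $F=h\circ\psi$ in which $\psi:E\to Y$ is holomorphic, transverse to $p$, and $h:Y\to M$ is harmonic on a closed hyperbolic Riemann surface $Y$. Restricting $\psi$ to an individual fiber $\hat{X}$ yields the non-constant holomorphic map $\hat{X}\to Y$ required in part (1). At the level of fundamental groups the factorization reads $\phi|_\Gamma=h_*\circ\psi_*$, so taking $\theta:=\psi_*:\Gamma\to\pi_1(Y)$ and $\Theta:=\Gamma$ (the torsion-free finite-index subgroup fixed at the outset, further shrunken if necessary for the puncture-filling step) yields part (2).

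The principal obstacle is the constant-energy step of the middle paragraph. A complete argument needs a compactification of the cover family $\hat{X}\to X$ adapted to $\phi$, together with sharp estimates on the behavior of $E(h_X)$ as $X$ degenerates toward the boundary of $\mathcal{M}_{g,n}$. I expect this is also where the assumption $g\ge 3$ enters, providing enough rigidity in the Teichm\"uller family for the compactness and plurisubharmonicity arguments to close.
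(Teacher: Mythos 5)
Your overall strategy---a fibration from the Birman sequence, constancy of fiberwise energy, factorization through a holomorphic map to a Riemann surface---matches the spirit of the paper, but there is a genuine gap in the way you set up the factorization step, and a corresponding omission in part (2).

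The gap is the admissibility of $p:E=\Teich_{g,n+1}/\Gamma\to B=\Teich_{g,n}/\mathcal{F}(\Gamma)$. Definition~\ref{dfn:admissible} requires that $p$ be \emph{topologically a product} off a proper analytic set, not merely locally trivial. Ehresmann's theorem gives local triviality but not a global product, and here the fibration genuinely has nontrivial monodromy: $\pi_1(B)=\mathcal{F}(\Gamma)$ acts on $\pi_1$ of the fiber through the Birman sequence, and passing to a torsion-free finite-index subgroup does nothing to kill this action. So Theorem~\ref{thm:supermain} does not apply to your $p:E\to B$. The reason that triviality is required is visible in \S\ref{subsec:top-product}: the argument needs a classifying map $\iota:B\setminus A\to\Teich_g$ to Teichm\"uller space itself, which exists only when the monodromy is trivial.

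The correct fix is to work over the contractible base $\Teich_{g,n}$ instead of $B$, so that the fibration \emph{is} a product and one can apply Theorem~\ref{thm:inframain} directly to the submanifold $\sigma_p(\Teich_{g,n})\subset\Teich_h$ (after filling punctures of the cover, as you correctly anticipate). This is what the paper does. But then the holomorphic map produced by Theorem~\ref{thm:inframain} is a priori only defined on $\Teich_{g,n+1}/(\Gamma\cap\Pi_{g,n})$ and is not automatically $\Gamma$-equivariant, so your conclusion $\theta:=\psi_*:\Gamma\to\pi_1(Y)$ for part (2) is not justified without a descent argument. The paper supplies this by invoking \cite[Propositions 2.3 and 2.4]{Markovic2022}, showing the holomorphic map is invariant under a finite-index subgroup $\Gamma'\leq\Mod_{g,n+1}$, and takes $\Theta=\Gamma\cap\Gamma'$. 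This step cannot be skipped. Your constant-energy sketch is in the right direction but slightly garbled: the paper shows, via Bridson's theorem, that \emph{all} vanishing cycles at the boundary are annihilated by $\psi$, hence the energy is bounded everywhere, and then uses Boggi--Pikaart's smooth projective compactification of a finite cover of moduli space together with the maximum principle; it is not a case analysis over strata.
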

Note that when $M$ is a one-holed torus equipped with a complete hyperbolic metric, Theorem \ref{thm:mcg}(\ref{item:geometry}) recovers one direction of a theorem of Markovi\'c \cite[Theorem 1.2, ``if'' direction]{Markovic2022}.
\subsection{Outline and organization} We first show Theorems \ref{thm:inframain} and \ref{thm:supermain} in \S\ref{sec:prelims}--\S\ref{sec:pf-thm-main}. We then show Theorem \ref{thm:ph-smooth-projective} in \S\ref{sec:factorization-results}, and Theorem \ref{thm:mcg} in \S\ref{sec:mcg}.
\subsubsection{Main results} Our point of view in showing Theorems \ref{thm:inframain} and \ref{thm:supermain} is Teichm\"uller theoretic. In \S\ref{sec:prelims} we explain how to derive Theorem \ref{thm:supermain} in general, assuming the case when $B$ is a submanifold of Teichm\"uller space and $E$ is the restriction of the universal curve to this submanifold. This special case is essentially Theorem \ref{thm:inframain}, and \S\ref{sec:ph}--\S\ref{sec:pf-thm-main} are devoted to its proof.
\par We now give an outline of the proof of Theorem \ref{thm:inframain}. Let $\mathcal{S}\subset\Teich_g$ be a submanifold, and $\psi:\Sigma_g\to M$ a continuous map with $\mathrm{E}_\psi$ constant on $\mathcal{S}$. Let $\pi:\Curve_g\to\Teich_g$ be the universal curve, and write $\Curve_\mathcal{S}=\pi^{-1}(\mathcal{S})$. We also write $F_\psi:\Curve_g\to M$ for the map that is harmonic and homotopic to $\psi$ on the fibres of $\pi$. Our aim is to show that $F_\psi$ factors through a holomorphic map to a closed Riemann surface $Y$.
\par The proof relies on three ingredients. 
\\\par \textbf{Ingredient 1. Foliation $\mathcal{F}$. } Using that $\mathrm{E}_\psi$ is constant, we show that $F_\psi$ is pluriharmonic and $\dim_\mathbb{C} DF_\psi(T^{1,0}\Curve_\mathcal{S})\leq 1$. This is a local argument, carried out in Proposition \ref{prop:energy-constant-iff-ph} in \S\ref{sec:ph}. We define a holomorphic foliation $\mathcal{F}$ on $\Curve_\mathcal{S}\setminus\Delta$, where $\dim_\mathbb{C}\Delta\leq\dim_\mathbb{C}\mathcal{S}-1$, and where the leaves of $\mathcal{F}$ are tangent to the $\ker\partial F_\psi$. In particular, $F_\psi$ is constant on $\mathcal{F}$. Properties of $\mathcal{F}$ can be found in Corollary \ref{cor:hol-structure}.
    \\\par \textbf{Ingredient 2. Schiffer varieties $\Lambda_{Y,p}$. } For a Riemann surface $Y$ and a continuous map $p:\Sigma_g\to Y$, define 
    \begin{align*}
        \Lambda_{Y,p}=\{X\in\Teich_g:\text{there exists a holomorphic map }X\to Y\text{ homotopic to }p\}
    \end{align*}
    In \S\ref{sec:schiffer}, we show that $\Lambda_{Y,p}$ is a complex closed submanifold of $\Teich_g$, and that there is a holomorphic map $\Curve_{\Lambda_{Y,p}}\to Y$ homotopic to $p$. \par Note that this space is stratified by $\mathrm{Schiff}_{Y,\sigma}$, where $\sigma$ ranges over branched covers $\Sigma_g\to Y$ homotopic to $p$. Rather than describing the combinatorics of how the spaces $\mathrm{Schiff}_{Y,\sigma}$ fit together, we choose here to show directly that $\Lambda_{Y,p}$ is a closed complex submanifold using the non-abelian Hodge correspondence and the previous work of the author \cite{Tosic2023}. Along the way, we also show the analogous result for indiscrete representations $\pi_1(\Sigma_g)\to\mathrm{PSL}(2,\mathbb{R})$. 
    \\\par \textbf{Ingredient 3. Equivalence relations $\sim$. } Fix a general point $t\in\mathcal{S}$ and let $X_t=\pi^{-1}(t)$ be the Riemann surface that corresponds to $t$, and consider the harmonic map $F_\psi|_{X_t}:X_t\to M$. By the work of Sagman \cite{Sagman2021}, there exists a maximal relation $\sim$ on $X_t$ through which $F_\psi|_{X_t}$ factors holomorphically (Definition \ref{dfn:sim}). Specifically, for $x,y \in X_t$ with $(\partial F_\psi|_{X_t})_x,(\partial F_\psi|_{X_t})_y\neq 0$, let $x\sim y$ if there are neighbourhoods $U_x$ (resp. $U_y$) of $x$ (resp. $y$), and a bihlomorphism $\phi:U_x\to U_y$ such that $F_\psi\circ\phi=F_\psi|_{U_x}$. It is shown in \cite{Sagman2021} that $X_t\to Y_t:=X_t/\sim$ is a finite (ramified) cover of closed Riemann surfaces (see \S\ref{subsec:holomorphic-equivalence-relation} for more precise references). Moreover, by definition we have $F_\psi|_{X_t}=h_t\circ q_t$, where $h_t:Y_t\to M$ is harmonic.
\\ \par We claim that $Y_t$ are isomorphic for $t$ ranging over some open subset of $\mathcal{S}$. Fix an arbitrary $t\in\mathcal{S}$. Since $F_\psi$ factors through the local leaf spaces of $\mathcal{F}$, it follows that $F_\psi|_{X_t}$ factors through a holomorphic map $X_t\to Y_s$ followed by $h_s:Y_s\to M$. Thus by definition of $\sim$ on $X_t$, it follows that $Y_t$ admits a non-constant holomorphic map to $Y_s$. It easily follows from this that the surfaces $\{Y_t:t\in\mathcal{S}\}$ are all isomorphic over some open subset of $\mathcal{S}$.
\par By a simple application of the Baire category theorem, it follows that $q_t:X_t\to Y_t$ are all homotopic and $Y_t$ all isomorphic for $t\in\mathcal{U}\subset\mathcal{S}$, over some open subset $\mathcal{U}$. Let $Y=Y_t$ and $q:\Sigma_g\to Y$ be homotopic to $q_t$, for some fixed $t\in\mathcal{U}$. Thus $\mathcal{U}\subset\Lambda_{Y,q}$. Since $\Lambda_{Y,q}$ is closed, it follows that $\mathcal{S}\subset\Lambda_{Y,q}$, and the proof of Theorem \ref{thm:inframain} is completed. 
\subsubsection{Factorization result} We show Theorem \ref{thm:ph-smooth-projective} in \S\ref{sec:factorization-results}. The singular version is easily obtained from the case when $X$ is smooth using Hironaka's theorems on resolution of singularities \cite{Hironaka1964,Hironaka1964a} combined with results on singularity removal for harmonic maps \cite{Meier1986}, so we will assume that $X$ is smooth in this outline.
\par Suppose that $X\subseteq\mathbb{P}^N$ is $d$-dimensional. By considering all possible intersections of $X$ with an $(N-d+1)$-dimensional subspace of $\mathbb{P}^N$, we obtain a diagram of holomorphic maps between smooth projective varieties 
\begin{equation*}
\begin{tikzcd}
    E \arrow[r, "p"]\arrow[d, "\phi"] & B \\ 
    X 
\end{tikzcd}    
\end{equation*}
where the general fibre of $p$ is a closed connected Riemann surface, by Bertini's theorem. Here $B$ parameterizes $(N-d+1)$-dimensional subspaces of $\mathbb{P}^N$. 
\par From the Siu--Sampson theorem \cite{Sampson1986,Siu1980} (the reader can also consult \cite[Chapter 6]{Amoros2014} or \cite{Loustau2020}) and our local analysis in \S\ref{sec:ph} (see Proposition \ref{prop:energy-constant-iff-ph}), it follows that $(f\circ\phi)|_{p^{-1}(b)}$ has energy independent of $b\in B$. 
\par Note that $p:E\to B$ is not topologically a product, so we are still unable to apply Theorem \ref{thm:supermain}. We therefore take the universal cover $\tilde{B}$ of $B$, which gives an immersion $\tilde{B}\looparrowright\Lambda_{Y,q}$, for some closed Riemann surface $Y$ and a continuous map $q:\Sigma_g\to Y$. In particular, we get a holomorphic map $\Curve_{\tilde{B}}\to Y$. An argument using the De Franchis--Severi theorem implies that this map descends to a holomorphic map $\theta:\hat{E}\to Y$, where $\hat{E}$ is the pullback of $E$ via a finite cover $\hat{B}\to B$. 
\par Using standard properties of harmonic maps \cite{aronszajn1956unique}, we can show that $\theta$ is constant on the connected components of the fibres of $\hat{E}\to E\stackrel{\phi}{\to}X$. Therefore we obtain a map $\hat{\theta}:X\to\Sym^k Y$ for an appropriate integer $k$, by mapping $x\in X$ to the $\theta$-image of the lifts to $\hat{E}$ of the connected components of $\phi^{-1}(x)\subset E$. We then show that the image of $\hat{\theta}$ has complex dimension one, and that $f$ factors through $\hat{\theta}$. Therefore the map $f$ factors through the holomorphic map from $X$ to a Riemann surface obtained by taking the normalization of the image of $\hat{\theta}$. 
\subsubsection{Mapping class groups} We show Theorem \ref{thm:mcg} in \S\ref{sec:mcg}, following the ideas in the joint paper of Markovi\'c and the author \cite{Markovic2024}.
\par Let $\Gamma\leq\Mod_{g,n+1}$ be a finite index subgroup, and $\phi:\Gamma\to\pi_1(M)$ a strongly point-pushing homomorphism. This defines a homomorphism $\phi\circ\iota:\pi_1(\Sigma_{g,n})\to\pi_1(M)$, with image not contained in a cyclic subgroup of $\pi_1(M)$, that is invariant under $\Gamma$. 
\par By a result of Bridson \cite{Bridson2010}, it follows that $\ker\phi$ contains all Dehn multitwists in $\Gamma$. In particular, it contains any elements of $\Pi_{g,n}$ that correspond to simple closed curves, including small loops around the punctures. Let $\psi:\pi_1(\Sigma_g)\to\pi_1(M)$ be the extension of $\phi\circ\iota$ over these punctures.
\par We consider the energy functional $\mathrm{E}_\psi:\Teich_g\to \mathbb{R}$. This functional is plurisubharmonic, and invariant under the image $\Gamma'$ of $\Gamma$ under the forgetful homomoprhism $\Mod_{g,n}\to\Mod_g$. Thus $\mathrm{E}_\psi$ descends to a plurisubharmonic map on a finite cover $\mathcal{M}$ of the moduli space of genus $g$. An analysis based on the same result of Bridson \cite{Bridson2010} implies that $\mathrm{E}_\psi$ is bounded on $\mathcal{M}$. Since $\mathcal{M}$ is quasiprojective, standard results in complex analysis imply that $\mathrm{E}_\psi$ is constant. Theorem \ref{thm:mcg} then follows immediately by Theorem \ref{thm:supermain}.
\section{Reduction to submanifolds of Teichm\"uller space}\label{sec:prelims}
In this section, we set the stage for the proof of Theorem \ref{thm:inframain}. We first introduce some notation in the setup of Theorem \ref{thm:inframain} in \S\ref{subsubsec:notation}, and then we show Theorem \ref{thm:supermain} assuming Theorem \ref{thm:inframain} in \S\ref{subsec:top-product}.
\subsection{Notation}\label{subsubsec:notation}
Let $\pi:\mathcal{V}_g\to\Teich_g$ be the universal curve over Teichm\"uller space, that is a complex fibration such that the fibre of $\pi$ over $S\in\Teich_g$ is biholomorphic to $S$. For a complex submanifold $\mathcal{S}$ of $\Teich_g$, we denote $\Curve_\mathcal{S}=\pi^{-1}(\mathcal{S})$. Given a homotopy class $\psi:\Sigma_g\to M$ for some convex cocompact manifold $M$ with negative sectional curvatures, we let
\begin{align*}
    F_\psi:\Curve_g\to M
\end{align*}
be the map that is harmonic and homotopic to $\psi$ on every fibre of $\pi:\Curve_g\to\Teich_g$.
\begin{remark}
    It follows from the classical work of Eells--Lemaire \cite{Eells1981} that the harmonic map $X\to M$ in the homotopy class of $\psi$ depends smoothly on $X\in\mathcal{T}_g$ (the reader can consult the work of Slegers \cite{Slegers2021} or previous work of the author \cite{Tosic2023} for the analogous result when the target is a non-compact symmetric space). In particular, the map $F_\psi$ is smooth.
\end{remark}
In the entirety of this paper, we will assume the following. 
\begin{assumptions}\label{ass:mfd}
    Let $M$ be a convex cocompact Riemannian manifolds that has negative Hermitian sectional curvature.
\end{assumptions}

\begin{definition}\label{dfn:schiffer}
    Given a continuous map $p:\Sigma_h\to \Sigma_{{g}}$, we define the \textit{Schiffer variety} $\Lambda_{Y,p}$ associated to $Y\in\Teich_{g}$ and $p$ to be the set of marked Riemann surfaces $X\in\Teich_h$ that admit a holomorphic map to $Y$ in the homotopy class of $p$.
\end{definition}
We will show in Proposition \ref{prop:schiffer} that Schiffer varieties are complex submanifolds of Teichm\"uller space $\Teich_h$, closed as subsets of $\Teich_h$. Moreover, we will show that the corresponding map $F_p:\Curve_{\Lambda_{Y,p}}\to Y$ is holomorphic.
\begin{remark}
    Given a covering map $\sigma:\Sigma_{h,m}\to\Sigma_{g,n}$, we get an isometric embedding of Teichm\"uller spaces $\sigma^*:\Teich_{g,n}\to\Teich_{h,m}$, obtained by lifting complex structures from $\Sigma_{g,n}$ to $\Sigma_{h,m}$ via $p$. Given a Riemann surface $Y\in\Teich_g$, let $\Delta_Y\subset\Teich_{g,n}$ be the preimage of $Y$ under the forgetful morphism $\Teich_{g,n}\to\Teich_g$. Then, if $\sigma$ is homotopic to $p$ after filling in the punctures, we see that $\sigma^*(\Delta_Y)$ projects to $\mathrm{Schiff}_{Y,\sigma}\subset\Lambda_{Y,p}$ under the forgetful morphism $\Teich_{h,m}\to\Teich_h$. These are the subsets of $\Teich_h$ that we referred to as Schiffer variations in the introduction. It is easy to see that $\{\mathrm{Schiff}_{Y,\sigma}:\sigma\simeq p\}$ stratify $\Lambda_{Y, p}$. 
\end{remark}
In \S\ref{subsec:top-product} we show Theorem \ref{thm:supermain} assuming Theorem \ref{thm:inframain}.
\subsection{Reducing Theorem \ref{thm:supermain} to Theorem \ref{thm:inframain}}\label{subsec:top-product}
Let $M, p, E, B, F, A$ be as in the statement of Theorem \ref{thm:supermain}, and assume Theorem \ref{thm:inframain}.
\par By the universal property of Teichm\"uller space, there exists a holomorphic map $\iota:B\setminus A\to \Teich_g$, such that $\iota^*\Curve_g=E\setminus p^{-1}(A)$. In fact, we abuse notation slightly to denote by $\iota$ both the natural map $B\setminus A\to\Teich_g$ and the natural map that makes the following diagram commute 
\[\begin{tikzcd}
    E\setminus p^{-1}(A) \arrow[r, "\iota"]\arrow[d, "p"] & \Curve_g\arrow[d, "\pi"] \\
    B\setminus A \arrow[r, "\iota"] &  \Teich_g
\end{tikzcd}\]
\begin{lemma}\label{lm:simply-connected-base-factor}
    Let $B$ be a connected complex manifold, $\iota:B\to\Teich_g$ a holomorphic map, and $M$ be a negatively curved manifold. Then for any continuous map $F:\iota^*\Curve_g\to M$ transverse to $p:=\iota^*\pi:\iota^*\Curve_g\to B$, such that $F_b=F|_{p^{-1}(b)}:p^{-1}(b)\to M$ is a harmonic map for any $b\in B$, we have  
    \begin{align*}
        F=F_\psi\circ\iota
    \end{align*}
    for some continuous map $\psi:\Sigma_g\to M$.
\end{lemma}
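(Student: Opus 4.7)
The plan is to exploit the topological triviality of the universal curve over Teichm\"uller space together with the uniqueness of harmonic maps in a given homotopy class.

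Since $\Teich_g$ is a cell (in particular contractible), the smooth fibration $\pi:\Curve_g\to\Teich_g$, whose fibres are diffeomorphic to $\Sigma_g$, admits a smooth trivialization $\Curve_g\cong\Teich_g\times\Sigma_g$ compatible with $\pi$; one can even choose it to respect the markings. Pulling back via $\iota$, I obtain a smooth trivialization $\tau:\iota^*\Curve_g\xrightarrow{\sim}B\times\Sigma_g$ with $p=\mathrm{pr}_1\circ\tau$. Viewed through $\tau$, the map $F$ becomes a continuous map $\widetilde F:B\times\Sigma_g\to M$, and I fix a basepoint $b_0\in B$ and set $\psi:=\widetilde F(b_0,\cdot):\Sigma_g\to M$.

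Since $B$ is a connected (hence path-connected) complex manifold, any $b\in B$ is joined to $b_0$ by a path $\gamma:[0,1]\to B$, and $(s,x)\mapsto\widetilde F(\gamma(s),x)$ is a continuous homotopy from $\psi$ to $\widetilde F(b,\cdot)$. Translating back through $\tau$, this means that $F_b:p^{-1}(b)\to M$ is homotopic to $\psi$ under the induced identification of the fibre with $\Sigma_g$. The map $(F_\psi\circ\iota)|_{p^{-1}(b)}$ is, by the definition of $F_\psi$, the harmonic map from $p^{-1}(b)\cong\pi^{-1}(\iota(b))$ to $M$ in the homotopy class of $\psi$, while $F_b$ is harmonic by hypothesis and in the same class. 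Applying the classical uniqueness theorem for harmonic maps from a closed Riemann surface into a strictly negatively curved target (Al'ber--Hartman), I conclude $F_b=(F_\psi\circ\iota)|_{p^{-1}(b)}$ for every $b$, and hence $F=F_\psi\circ\iota$.

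The main obstacle is the degenerate case of the uniqueness theorem: if the homotopy class is that of a map whose image lies in a single closed geodesic of $M$, then there is an $S^1$-worth of harmonic representatives. In that case one must supplement the argument by continuity, noting that $b\mapsto F_b$ and $b\mapsto (F_\psi\circ\iota)|_{p^{-1}(b)}$ both vary continuously in $b$, they agree over $b_0$ by construction, and the space of harmonic representatives of a fixed class is a discrete quotient of a compact group, so the locus of agreement is open and closed. Alternatively, the transversality hypothesis on $F$ is natural to impose precisely to exclude this geodesic degeneracy, since it forces the vertical differential of $F$ to have rank $2$ on a dense set of each fibre, ruling out an image confined to a one-dimensional geodesic.
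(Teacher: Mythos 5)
Your main argument is correct and takes a genuinely different, more direct route than the paper's. The paper does not introduce a global trivialization: it applies the constant rank theorem to $\iota$, writes $\iota$ locally as a submersion $s:V\to\mathcal{S}$ onto a submanifold of $\Teich_g$, uses injectivity of the Jacobi operator to show $F$ descends to a map $\bar F:\Curve_\mathcal{S}\to M$ with $\bar F=F_\psi$, and then extends from the dense open set of locally constant rank by continuity. Your argument bypasses all of this: you trivialize $\iota^*\Curve_g\cong B\times\Sigma_g$, observe by a path-lifting argument that every $F_b$ is in the homotopy class of $\psi$, and invoke uniqueness of harmonic maps in a homotopy class to identify $F_b$ with $(F_\psi\circ\iota)|_{p^{-1}(b)}$ fibre by fibre. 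This avoids the constant rank theorem, the descent step, and the density-plus-continuity step, and is cleaner in the nondegenerate case.

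Both of your proposed fixes for the degenerate geodesic case are, however, incorrect. First, transversality of $F$ to $p$ does not exclude the geodesic case: in this paper ``transverse to $p$'' is only used to rule out $F$ being constant on the fibres of $p$, and indeed Theorem \ref{thm:supermain} assumes transversality yet lists ``$F$ has image in a closed geodesic'' as one of its allowed conclusions. A fibrewise harmonic map wrapping around a closed geodesic is nonconstant on each fibre, hence transverse in this sense. Second, the ``locus of agreement is open and closed'' argument does not work: the set of harmonic representatives in the geodesic class is a full circle $S^1$ (rotations of the geodesic), not a discrete quotient of a compact group, so two continuous $S^1$-valued families agreeing at $b_0$ need not agree on a neighbourhood (take a fixed family and a rotating one). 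In fact the conclusion $F=F_\psi\circ\iota$ is simply not forced in the geodesic case---e.g.\ take $\iota$ constant and $F_b$ rotating in $b$---and the paper's own proof also glosses over this (injectivity of the Jacobi operator fails precisely when the image is a geodesic). The resolution, visible in the subsequent application, is that when $\psi$ is homotopic into a curve, one only needs the weaker statement that each $F_b$ has image in the corresponding closed geodesic, which follows directly from harmonicity and the homotopy class; the full equality $F=F_\psi\circ\iota$ is only needed, and only valid, when $\psi$ is not homotopic into a curve.
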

\begin{proof}
    Since the fibres of $p:\iota^*\Curve_g\to B$ are identified with the fibres of $\pi:\Curve_g\to\Teich_g$ via $\iota$, they each have a natural marking. Let $\psi:\Sigma_g\to M$ be the continuous map, well-defined up to homotopy, such that $F|_{p^{-1}(b)}$ is in the homotopy class of $\psi$, for any $b\in B$.
    \par Now let $U\subset B$ be an open subset where $F$ has constant rank. Then by the constant rank theorem, we can locally write ${\iota}$ over a possibly smaller open set $V\subseteq U$ as 
    \begin{align*}
        V\stackrel{s}{\longrightarrow} \mathcal{S}\subset \Teich_g,
    \end{align*}
    where $s$ is a submersion and $\mathcal{S}$ is a complex submanifold of $\Teich_g$. Then $F:s^*\Curve_\mathcal{S}\to M$ is harmonic on the fibres $\pi^{-1}(t)$ for $t\in\mathcal{S}$. Since $M$ is negatively curved, the Jacobi operator of any harmonic map from a closed surface into $M$ is injective. Therefore $F$ descends to a map $\bar{F}:\Curve_\mathcal{S}\to M$. Note that $\bar{F}|_{\pi^{-1}(t)}\simeq\psi$ for any $t\in\mathcal{S}$, so $\bar{F}=F_\psi$. Thus $F=F_\psi\circ\iota$ on $V$.
    \par Since this equality holds on the open dense subset where $\iota$ has locally constant rank, by continuity we have $F=F_\psi\circ\iota$ everywhere on $B$.
\end{proof}
Applying Lemma \ref{lm:simply-connected-base-factor}, we get $F=F_\psi\circ\iota$ over $E\setminus p^{-1}(A)$. If $\psi$ is null-homotopic, then $F_\psi$ is constant on the fibres of $\pi$. Then $F$ is constant on the fibres of $p$, which is a contradiction since $F$ is transverse to $p$. If $\psi$ is homotopic into a closed curve $\gamma$, then the image of $F_\psi$ lies in the closed geodesic that corresponds to $\gamma$. Then the image of $F$ also lies in this closed geodesic, and Theorem \ref{thm:supermain} is shown. Assume therefore that $\psi$ is not homotopic into a curve.  
\par Note that the energy of $F$ over $p^{-1}(b)$ is equal to $\mathrm{E}_\psi\circ\iota(b)$. Therefore $\mathrm{E}_\psi$ is constant on $\iota(B\setminus A)$. By Theorem \ref{thm:inframain}, the set $\iota(B\setminus A)$ is contained in a Schiffer variety $\Lambda\subset\Teich_g$, and $F_\psi=h\circ\phi$, where $\phi:\Curve_\Lambda\to Y$ is a holomorphic map to a closed hyperbolic Riemann surface $Y$. By the Schwarz lemma, the map $\phi\circ\iota:E\setminus p^{-1}(A)\to Y$ is meromorphic in the sense of Andreotti \cite{Andreotti1960}, and hence extends to a map $\bar{\phi}:E\to Y$ by \cite[Theorem 4]{Andreotti1960}. Therefore $F=h\circ\bar{\phi}$ on $E\setminus p^{-1}(A)$, and hence by continuity on $E$, and Theorem \ref{thm:supermain} is shown.
\section{Pluriharmonicity of the combined map}\label{sec:ph}
Our main result in this section is the following characterization of constant energy $\mathrm{E}_\psi$ complex submanifolds of Teichm\"uller space in terms of the map $F_\psi$, that we believe to be of independent interest.
\begin{proposition}\label{prop:energy-constant-iff-ph}
    Let $\psi:\Sigma_g\to M$ be a continuous map into a Riemannian manifold $M$. Suppose that the pair $(M,\psi)$ satisfies either of the following two conditions 
    \begin{enumerate}
        \item\label{item:negative} $M$ has negative Hermitian sectional curvature and is convex cocompact, and $\psi$ is not homotopic into a closed curve in $M$, or 
        \item $M$ is a locally symmetric space of non-positive curvature, and the representation $\psi_*:\pi_1(\Sigma_g)\to \mathrm{Isom}(\tilde{M})$ has trivial centralizer.
    \end{enumerate}
    Let $\mathcal{S}\subset\Teich_g$ be a complex submanifold. Then $\mathrm{E}_\psi$ is constant on $\mathcal{S}$ if and only if the map $F_\psi$ is pluriharmonic and $R^M(\partial F_\psi|_{\ker\pi_*}\wedge \partial F_\psi)=0$ on $\mathcal{V}_\mathcal{S}$.
\end{proposition}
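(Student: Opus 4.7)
The plan is to derive a Bochner/Weitzenböck-type formula that expresses $\sqrt{-1}\,\partial\bar\partial \mathrm{E}_\psi$ on $\mathcal{S}$ as a fibre integral of two pointwise non-negative quantities, one a norm-squared measuring the pluriharmonicity defect of $F_\psi$ and the other the curvature contribution of interest. Vanishing of the Hessian of $\mathrm{E}_\psi$ on $\mathcal{S}$ is then equivalent to pointwise vanishing of both.

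Concretely, for a holomorphic tangent vector $v \in T^{1,0}_z\mathcal{S}$, I would pick a horizontal lift $\tilde v$ on $\mathcal{V}_\mathcal{S}$ via some Ehresmann connection, and differentiate the fibre integral $\mathrm{E}_\psi(z) = \int_{X_z}|\partial F_\psi|^2$ twice along $\tilde v$. The first variation reduces, using the harmonic map equation on fibres, to the classical Hopf-differential formula
\[
\partial \mathrm{E}_\psi(v) \;=\; \int_{X_z}\langle \Phi_{F_\psi},\mu_v\rangle\,d\mathrm{vol},
\]
where $\mu_v$ is the Beltrami differential representing $v$. Differentiating once more in $\bar v$ and integrating by parts yields
\[
\sqrt{-1}\,\partial\bar\partial \mathrm{E}_\psi(v,\bar v) \;=\; \int_{X_z}\!\Bigl(\bigl\lvert \bar\nabla(\partial F_\psi(\tilde v))\bigr\rvert^2 \,-\, R^M\bigl(\partial F_\psi(\tilde v)\wedge \partial F_\psi(V),\,\overline{\partial F_\psi(\tilde v)\wedge\partial F_\psi(V)}\bigr)\Bigr)d\mathrm{vol},
\]
where $V$ trivialises $T^{1,0}\ker\pi_*$ locally. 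Both integrands are pointwise non-negative: the first trivially, and the second by the Hermitian sectional curvature hypothesis in case~(1), respectively by the non-positivity of complexified sectional curvatures on the symmetric space in case~(2).

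Given this, the ``only if'' direction is immediate: if $\mathrm{E}_\psi$ is constant then $\partial\bar\partial\mathrm{E}_\psi\equiv 0$, so both integrands vanish pointwise on $\mathcal{V}_\mathcal{S}$. Vanishing of the first, combined with the harmonic-map equation on fibres, yields full pluriharmonicity of $F_\psi$; vanishing of the second is precisely the stated curvature condition. For the converse, the formula gives $\partial\bar\partial\mathrm{E}_\psi\equiv 0$, and to conclude constancy one still needs $\partial\mathrm{E}_\psi\equiv 0$. This follows from the first-variation formula above: pluriharmonicity of $F_\psi$ forces the fibrewise Hopf differentials $\Phi_{F_\psi}$ to assemble into a holomorphic object on $\mathcal{V}_\mathcal{S}$ whose pairing with any Beltrami deformation tangent to $\mathcal{S}$ vanishes (with the centraliser hypothesis in case~(2) used to rule out stray Jacobi-field contributions).

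The main technical obstacle is the Bochner calculation itself: one must carefully choose the horizontal distribution and track the contributions from Siu's Bochner identity for $F_\psi$, the second fundamental form of $\pi$, and the harmonic-map equation on fibres, so that after cancellations the formula takes the stated shape and the curvature term appears precisely as $R^M(\partial F_\psi|_{\ker\pi_*}\wedge \partial F_\psi)$. This is essentially the refinement of Toledo's computation needed to isolate the exact curvature defect; the rest of the proof is formal once the formula is in hand.
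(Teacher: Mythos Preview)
Your Bochner strategy is correct and is essentially what the paper does (packaging the second-variation computation from \cite{Tosic2023} as Lemma~\ref{lm:toledo}). But your ``only if'' direction has a real gap. The second variation of $\mathrm{E}_\psi$ is a \emph{fibre} integral, so the $\bar\nabla$ in your first integrand is the \emph{vertical} $\bar\partial$; its vanishing gives only the mixed component $F_{t\bar z}=0$ (in local holomorphic coordinates $(z,t)$ with $z$ vertical and $t$ horizontal), and fibrewise harmonicity gives $F_{z\bar z}=0$. Neither touches the purely horizontal component $F_{t\bar t}$, which is still needed for full pluriharmonicity of $F_\psi$. Your sentence ``vanishing of the first, combined with the harmonic-map equation on fibres, yields full pluriharmonicity'' is precisely where the argument is incomplete, and the rest is \emph{not} formal once the formula is in hand.

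The paper handles $F_{t\bar t}$ separately: from $F_{z\bar t}=0$ and the curvature condition one computes $\nabla_z F_{t\bar t}=\nabla_{\bar z}F_{t\bar t}=0$, so $F_{t\bar t}$ is parallel along each fibre and satisfies $R^M(F_{t\bar t},\partial^{\mathrm{vert.}}F_\psi)=0$. This is exactly where hypotheses (1) and (2) are invoked, and nowhere else. In case~(2), a nonzero such parallel section would produce a one-parameter family of harmonic maps and hence violate uniqueness (Sunada), contradicting trivial centraliser. In case~(1), if $F_{t\bar t}$ were nowhere zero on some fibre then $\partial^{\mathrm{vert.}}F_\psi$ would take values in the line it spans, forcing the fibrewise harmonic map to have rank at most one and hence (by Sampson) image in a closed geodesic, contradicting the hypothesis on $\psi$. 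So the special assumptions on $(M,\psi)$ enter in the ``only if'' direction to kill $F_{t\bar t}$, not---as you have it---in the ``if'' direction.
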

\begin{remark}
    Suppose we are in the setting of Proposition \ref{prop:energy-constant-iff-ph}(\ref{item:negative}). Then observe that the conclusions of the proposition can be restated as 
    \begin{align*}
        \bar{\partial}\partial F_\psi=0\text{ and }DF_\psi(T^{1,0}\Curve_\mathcal{S})\leq 1,
    \end{align*}
    which are the same as the conclusions of the Siu--Sampson rigidity theorem \cite{Siu1980,Sampson1986}. In particular, the Siu--Sampson theorem can be shown (for targets with strictly negative Hermitian sectional curvature) using Proposition \ref{prop:energy-constant-iff-ph} using the ideas of Gromov \cite[\S 4.6]{Gromov2012} for complex manifolds that are closed Riemann surface bundles over closed complex manifolds. 
\end{remark}
We will mainly use Proposition \ref{prop:energy-constant-iff-ph} for its complex geometric consequences, layed out in the following corollary shown in \S\ref{subsec:pf-cor-hol-structure}. Given a section $\phi$ of some holomorphic line bundle $L$ over a complex manifold $X$, we denote by $\mathbb{V}(\phi)$ the analytic subspace of $X$ defined by the vanishing of $\phi$.
\begin{corollary}\label{cor:hol-structure}
    Let $(M,\psi)$ be as in Proposition \ref{prop:energy-constant-iff-ph}(\ref{item:negative}). If $\mathcal{S}\subset\Teich_g$ is a connected complex submanifold contained in a level set of $\mathrm{E}_\psi$, then we have the following.
    \begin{enumerate}
        \item\label{item:cor-hol-constr-bundle} The bundle $E=F_\psi^* TM\otimes\mathbb{C}$ equipped with the $(0,1)$-part of the Levi--Civita connection on $M$ and the pullback metric from $M$ is a holomorphic Hermitian vector bundle over $\Curve_\mathcal{S}$.
        \item\label{item:cor-hol-constr-line} There exists a holomorphic line subbundle $L\leq E$ that inherits the Hermitian structure from $E$, such that $\partial F_\psi$ is an $L$-valued $(1,0)$ form that is closed with respect to the Chern connection.
        \item\label{item:cor-hol-constr-foliation} There is a (complex) codimension two subset $\Delta\subset\Curve_\mathcal{S}$ and a holomorphic codimension $1$ foliation $\mathcal{F}$ on $\Curve_\mathcal{S}\setminus\Delta$, such that $T\mathcal{F}=\ker\partial F_\psi$ on $\Curve_\mathcal{S}\setminus\mathbb{V}(\partial F_\psi)$ and such that the connected components of $\mathbb{V}(\partial F_\psi)\setminus \Delta$ are leaves of $\mathcal{F}$.
    \end{enumerate}
\end{corollary}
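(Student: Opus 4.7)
The plan is to derive all three claims from the two conclusions of Proposition \ref{prop:energy-constant-iff-ph}: pluriharmonicity $\bar\partial^E(\partial F_\psi)=0$, where $\bar\partial^E$ is the $(0,1)$-part of $D^E:=F_\psi^*\nabla^{\mathrm{LC}}$ on $E=F_\psi^*TM\otimes\mathbb{C}$, together with the rank bound $\dim_\mathbb{C}\partial F_\psi(T^{1,0}\Curve_\mathcal{S})\leq 1$. These two inputs, combined with torsion-freeness of $\nabla^{\mathrm{LC}}$, produce each part in turn.

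For (\ref{item:cor-hol-constr-bundle}), I would verify $(\bar\partial^E)^2=0$, which reduces to vanishing of the $(0,2)$-component of the pullback curvature $F_\psi^*R^M$: on two $(0,1)$-vectors $X,Y$ it equals $R^M(\bar\partial F_\psi(X),\bar\partial F_\psi(Y))$. Since $\bar\partial F_\psi$ has rank at most one (the complex conjugate of the rank bound on $\partial F_\psi$), the two arguments are proportional complex vectors and the expression vanishes by antisymmetry of $R^M$. The complexified pulled-back Riemannian metric is then Hermitian and compatible with $\bar\partial^E$ (as $D^E$ is metric), giving $E$ the claimed holomorphic Hermitian structure.

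For (\ref{item:cor-hol-constr-line}), pluriharmonicity says that $\partial F_\psi$ is a holomorphic bundle map $T^{1,0}\Curve_\mathcal{S}\to E$ of generic rank at most one. Let $L\subseteq E$ be the saturation of its image sheaf; if $L\subsetneq L^{**}$ then $L^{**}/L$ would embed in the torsion-free quotient $E/L$, a contradiction, so $L$ is reflexive. On the smooth base this forces $L$ to be a holomorphic line bundle, and a genuine line subbundle of $E$ outside an analytic set of codimension $\geq 2$. Equip $L$ with the restricted Hermitian metric, so that its Chern connection is $D^L=\pi_L\circ D^E$; here $D^E$ itself is the Chern connection on $E$ by uniqueness. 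The crucial identity is that torsion-freeness of $\nabla^{\mathrm{LC}}$ forces $D^E(dF_\psi)=0$ as an $E$-valued $2$-form---a coordinate-free expression of the symmetry of the Hessian of a map---and its $(2,0)$-component is $\partial^E\partial F_\psi$, which therefore vanishes. Combined with $\bar\partial^E\partial F_\psi=0$ this gives $D^E\partial F_\psi=0$; projecting to $L$ yields $D^L\partial F_\psi=0$, i.e., $\partial F_\psi$ is closed with respect to the Chern connection on $L$.

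For (\ref{item:cor-hol-constr-foliation}), on the open set where $\partial F_\psi\neq 0$ and $L$ is a subbundle, $\ker\partial F_\psi$ is a codimension-$1$ holomorphic distribution. Locally trivialising $L$ turns $D^L\partial F_\psi=0$ into $d\theta=\theta\wedge\phi$ for a scalar 1-form $\theta$ and connection 1-form $\phi$, whence $\theta\wedge d\theta=0$ and Frobenius integrability applies. To extend the foliation across $\mathbb{V}(\partial F_\psi)$, I would locally factor $\partial F_\psi=f\cdot\tilde\omega$, where $f$ is a holomorphic function cutting out the codimension-$1$ part $D$ of $\mathbb{V}(\partial F_\psi)$ and $\tilde\omega$ is a twisted $(1,0)$-form valued in $L\otimes\mathcal{O}(-D)$ whose own zero set has codimension $\geq 2$; $\ker\tilde\omega$ extends the distribution across $D$, making the smooth locus of $D$ a union of leaves. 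Taking $\Delta$ to contain the non-subbundle locus of $L$, the residual zero locus of $\tilde\omega$, the singular locus of $D$, and any higher-codimension components of $\mathbb{V}(\partial F_\psi)$ keeps $\Delta$ of codimension $\geq 2$ and produces $\mathcal{F}$. The main technical obstacle will be this last step: carrying out the local factoring $\partial F_\psi=f\tilde\omega$ coherently along divisorial components of $\mathbb{V}(\partial F_\psi)$, verifying the Pfaffian condition $\tilde\omega\wedge d\tilde\omega=0$ for the twisted form (by repeating the Chern-connection argument with the induced connection on $L\otimes\mathcal{O}(-D)$), and gluing the local extensions into a single holomorphic foliation; by contrast, (\ref{item:cor-hol-constr-bundle}) and (\ref{item:cor-hol-constr-line}) amount to the direct computations above once Proposition \ref{prop:energy-constant-iff-ph} and the identity $D^E(dF_\psi)=0$ are in hand.
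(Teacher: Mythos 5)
Your overall plan matches the paper's: curvature vanishing from the rank bound for (\ref{item:cor-hol-constr-bundle}), torsion-freeness plus pluriharmonicity giving $D^E\partial F_\psi=0$ for (\ref{item:cor-hol-constr-line}), and Frobenius plus a divisorial extension for (\ref{item:cor-hol-constr-foliation}). The argument for (\ref{item:cor-hol-constr-bundle}) is sound once the rank bound $\partial F_\psi\wedge\partial F_\psi=0$ is established on all of $\Curve_\mathcal{S}$ (the paper does this by noting $\partial^{\mathrm{vert.}}F_\psi$ has isolated zeros on fibres and passing to the limit; you should not treat it as a direct consequence of the Proposition's conclusion, which only controls $\partial F_\psi|_{\ker\pi_*}\wedge\partial F_\psi$).

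There are, however, two genuine gaps. First, in (\ref{item:cor-hol-constr-line}) your saturation argument produces $L$ as a holomorphic line bundle that is a \emph{subbundle of $E$ only outside a codimension-$\geq 2$ set}, whereas the corollary asserts $L\leq E$ is a subbundle on all of $\Curve_\mathcal{S}$. The paper gets the stronger statement by citing Toledo \cite[Theorem~4~(2)]{Toledo2011}, which extends the line subbundle generated by $\partial F_\psi$ across its zero locus; a purely sheaf-theoretic saturation cannot give this. Second, in (\ref{item:cor-hol-constr-foliation}) you correctly identify the difficulty but stop at asserting that $\ker\tilde\omega$ makes the smooth part of $D=\mathbb{V}(\partial F_\psi)$ into leaves. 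This needs proof: after writing $\Phi_*\tau(\partial F_\psi)=z^n f\,dz+\sum_j g_j\,dt_j$ with $f(0)\neq 0$ along the top-dimensional smooth stratum of $D$, the closedness relation $d\tau(\partial F_\psi)+\Gamma\wedge\tau(\partial F_\psi)=0$ forces $g_j$ to vanish to order $n+1$, not just $n$. Only then does $\omega=z^{-n}\Phi_*\tau(\partial F_\psi)=f\,dz+z\sum_j h_j\,dt_j$ restrict on $\{z=0\}$ to $f\,dz$, so that $T\{z=0\}\subset\ker\omega$ and $\{z=0\}$ is a leaf. Without this extra order of vanishing, the extended distribution could be transverse to the divisor. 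Your plan of ``repeating the Chern-connection argument'' for the twisted form is a reasonable route to the Pfaffian condition $\omega\wedge d\omega=0$, but it does not by itself give the leaf property; the inductive order-of-vanishing computation is what closes the argument.
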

The harder ``only if'' direction of Proposition \ref{prop:energy-constant-iff-ph} will be shown in \S\ref{subsec:pf-prop-ph}. We show the ``if'' direction in \S\ref{subsec:pf-lm-only-if}. The proof of both directions of Proposition \ref{prop:energy-constant-iff-ph} depends on the computations in \cite{Tosic2023}, that we recall and slightly rephrase in \S\ref{subsec:dg}.
\subsection{Vector fields on the universal curve}\label{subsec:vf}
We now rephrase \cite[proof of Theorem 3]{Toledo2011} and \cite[Theorem 1.9]{Tosic2023} in differential geometric language that is more suited to our applications here. Let $J$ be the almost complex structure on $\mathcal{V}_g$ and let $\partial^{\mathrm{vert.}}$ and $\bar{\partial}^{\mathrm{vert.}}$ be the differential operators related to the Riemann surface structure on the fibres of $\pi$. Given $t\in\Teich_g$, let $X_t=\pi^{-1}(t)\subset\Curve_g$ be the Riemann surface that corresponds to $t$.
\begin{lemma}\label{lm:correspondence}
    There is a correspondence between smooth type $(1,0)$ vector fields on $X_t\subset\Curve_g$ that are lifts of some fixed vector in $T_{t}^{1,0}\Teich_g$, and smooth Beltrami forms on $X_t$, given by
    \begin{align}\label{eq:correspondence}
        V &\longrightarrow \mu=\overline{\bar{\partial}^{\mathrm{vert.}}V}.\tag{$\star$}
    \end{align}
    Moreover, in this case $\omega\circ\mathcal{L}_{V}J=2i\mu\omega^{1,0}$ for any $1$-form $\omega$ on $X_t$.
\end{lemma}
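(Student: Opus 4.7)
The plan is to reduce both claims to a short calculation in local holomorphic coordinates on $\Curve_g$ adapted to $\pi$. Near any point of $X_t$, I would choose coordinates $(z,s_1,\ldots,s_{3g-3})$ on $\Curve_g$ with $\pi(z,s)=s$; then $X_t$ appears as the slice $\{s=s(t)\}$, and any smooth $(1,0)$ vector field $V$ on $X_t$ with $\pi_*V=\xi=\sum_i c_i\partial_{s_i}\in T_t^{1,0}\Teich_g$ necessarily has the form $V=f(z,\bar z)\partial_z+\sum_i c_i\partial_{s_i}$ for some smooth complex-valued function $f$ on $X_t$, with $f$ ranging freely as we vary $V$ over lifts of the given $\xi$.

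For the correspondence, I would observe that $T^{1,0}\Curve_g|_{X_t}$ is naturally a holomorphic vector bundle over $X_t$ and fits into the short exact sequence $0\to T^{1,0}X_t\to T^{1,0}\Curve_g|_{X_t}\to\pi^*T_t^{1,0}\Teich_g\to 0$. Since $\pi_*V=\xi$ is a constant (hence holomorphic) section of the quotient, $\bar\partial^{\mathrm{vert.}}V$ lies in the kernel of the projection, i.e., in $\Omega^{0,1}(X_t,T^{1,0}X_t)$; in the chart above this reads $\bar\partial^{\mathrm{vert.}}V=\partial_{\bar z}f\,d\bar z\otimes\partial_z$, and complex conjugating yields the Beltrami form $\mu$. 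Bijectivity then follows because, for fixed $\xi$, the lifts form an affine space over $\Gamma(X_t,T^{1,0}X_t)$ on which $\bar\partial$ is injective (using $H^0(X_t,T^{1,0}X_t)=0$ for $g\geq 2$), with image exactly the Beltrami forms in the Kodaira--Spencer class of $\xi$; as $\xi$ varies in $T_t^{1,0}\Teich_g\cong H^1(X_t,T^{1,0}X_t)$, these classes exhaust $H^1(X_t,T^{1,0}X_t)$.

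To verify the identity $\omega\circ\mathcal{L}_V J=2i\mu\omega^{1,0}$, I would compute $\mathcal{L}_V J$ in the same chart. Since the $c_i$ are constants, the only nontrivial brackets are $[V,\partial_z]=-(\partial_z f)\partial_z$ and $[V,\partial_{\bar z}]=-(\partial_{\bar z}f)\partial_z$; combining with $J\partial_z=i\partial_z$, $J\partial_{\bar z}=-i\partial_{\bar z}$, and $(\mathcal{L}_V J)W=[V,JW]-J[V,W]$, one finds that $(\mathcal{L}_V J)\partial_z=0$ while $(\mathcal{L}_V J)\partial_{\bar z}$ is a scalar multiple of $(\partial_{\bar z}f)\partial_z$, from which the claimed identity follows by contracting with $\omega$ and reading off the coefficients.

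I expect the main subtlety to be bookkeeping of conventions: keeping straight whether Beltrami forms are treated as sections of $\overline{K_{X_t}}\otimes T^{1,0}X_t$ or of $K_{X_t}\otimes T^{0,1}X_t$, so that the complex conjugate in the definition $\mu=\overline{\bar\partial^{\mathrm{vert.}}V}$ lands on the correct side; and verifying that $\bar\partial^{\mathrm{vert.}}V$ transforms tensorially under holomorphic changes of the fibre coordinate $z$, so that the expression glues to a globally defined Beltrami form on $X_t$. Once these conventions are fixed, both statements reduce to the chart calculation above.
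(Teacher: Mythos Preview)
Your approach is correct but takes a different route from the paper. The paper constructs the inverse to $(\star)$ explicitly: given a smooth Beltrami form $\mu$, it takes the family of quasiconformal maps $f^{t\mu}:X\to X^{t\mu}$ solving the Beltrami equation, sets $V_\mu=\partial_t f^{t\mu}|_{t=0}$, and then complexifies to $V_\mu^{\mathbb C}=\tfrac12(V_\mu-iV_{i\mu})$. It then quotes a formula from earlier work (\cite{Markovic2024,Tosic2023}) for $\omega\circ\mathcal{L}_{V_\mu}J$, derives the stated identity for $V_\mu^{\mathbb C}$, and from that identity reads off $\bar\partial^{\mathrm{vert.}}V_\mu^{\mathbb C}=\mu$ by evaluating on $\bar\partial_z$. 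Injectivity is deduced, as in your argument, from the absence of holomorphic vector fields on $X_t$.

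By contrast, you work directly in local holomorphic coordinates on $\Curve_g$ and compute both $\bar\partial^{\mathrm{vert.}}V$ and $\mathcal{L}_VJ$ from scratch, invoking the Kodaira--Spencer isomorphism $T_t^{1,0}\Teich_g\cong H^1(X_t,T^{1,0}X_t)$ to obtain surjectivity. Your argument is more self-contained and avoids the external references; the paper's version has the advantage of producing the explicit inverse $\mu\mapsto V_\mu^{\mathbb C}$, which is the notation used in the subsequent computation of the Levi form of $\mathrm{E}_\psi$ (Lemma~\ref{lm:toledo}). The convention issue you flag about where the complex conjugate lands is real---the paper's own proof in fact verifies $\bar\partial^{\mathrm{vert.}}V_\mu^{\mathbb C}=\mu$ rather than its conjugate---so you are right to be careful there.
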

\begin{proof}
    It suffices to construct the inverse to the map (\ref{eq:correspondence}). We will use the computations in \cite{Markovic2024,Tosic2023}. We set $X=X_t$ throughout this proof.
    \par The point of view taken in \cite{Tosic2023} is to consider a fixed topological surface $\Sigma_g$, and then consider a family of almost-complex structures $J_t$ on $\Sigma_g$, parameterized by points $t\in\Teich_g$. To use the formulae from \cite{Tosic2023}, we therefore need to identify the fibres of $\Curve_\mathcal{S}\to\mathcal{S}$. Let $\mu$ be a smooth Beltrami differential on the marked Riemann surface $X\in\Teich_g$. As in \cite[\S 2.1]{Markovic2024}, we identify different fibres via solutions to the Beltrami equation $f^{t\mu}:X\to X^{t\mu}$, where $X^{t\mu}$ is a different Riemann surface, $t$ is a small real parameter, and 
    \begin{align*}
        t\mu=\frac{\bar{\partial}f^{t\mu}}{\partial f^{t\mu}}.
    \end{align*}
    We therefore define $V_\mu$ to be $\frac{\partial f^{t\mu}}{\partial t}$. We also let $V_\mu^{\mathbb{C}}=\frac{1}{2}\left(V_\mu-iV_{i\mu}\right)$. We will show that $\mu\to V_\mu^\mathbb{C}$ provides the inverse to (\ref{eq:correspondence}).
    \par Rephrasing \cite[Claim 3.2]{Markovic2024} or \cite[Claim 3]{Tosic2023}, we get the following claim. 
\begin{claim}\label{claim:jdot}
    For any 1-form $\omega$ on $X$ and any smooth Beltrami differential $\mu$ on $X$, we have
    \begin{gather*}
        \omega\circ\mathcal{L}_{V_\mu}J=2i\left(\mu\omega^{1,0}-\bar{\mu}\omega^{0,1}\right),\text{ and }\omega\circ\mathcal{L}_{V_\mu^\mathbb{C}}J=2i\mu\omega^{1,0}.
    \end{gather*} 
\end{claim}
\begin{proof}
    The first equality is equivalent to both \cite[Claim 3]{Tosic2023} and \cite[Claim 3.2]{Markovic2024}. The second equality follows from the first by an easy computation,
    \begin{align*}
        \omega\circ\mathcal{L}_{V_\mu^{\mathbb{C}}}J&=\frac{1}{2}\omega\circ\left(\mathcal{L}_{V_\mu}J-i\mathcal{L}_{V_{i\mu}}J\right)\\
        &=i\left(\mu\omega^{1,0}-\bar{\mu}\omega^{0,1}-i\left(i\mu\omega^{1,0}+i\bar{\mu}\omega^{0,1}\right)\right)\\
        &=2i\mu\omega^{1,0}.
    \end{align*} 
\end{proof}
Working in a local holomorphic coordinate $z$ on $X$ and setting $\omega=dz$, we see that 
\begin{align*}
    \left(\mathcal{L}_{V_\mu^\mathbb{C}}J\right)(\bar{\partial}_z)=2i\mu \partial_z.
\end{align*}
Here we abused notation slightly to denote by $\mu$ the coordinate expression for the Beltrami form in $z$. On the other hand, 
\begin{align*}
    \left(\mathcal{L}_{V_\mu^\mathbb{C}}J\right)(\bar{\partial}_z)&=-i\mathcal{L}_{V_\mu^\mathbb{C}}\bar{\partial}_z-J\mathcal{L}_{V_\mu^\mathbb{C}}\bar{\partial}_z\\
    &=-2i [V_\mu^\mathbb{C}, \bar{\partial}_z]^{1,0}\\
    &=-2i\left(\nabla_{V_\mu^\mathbb{C}}\bar{\partial}_z-\nabla_{\bar{z}}V_\mu^\mathbb{C}\right)^{1,0}\\
    &=2i\bar{\partial}V_\mu^{\mathbb{C}}.
\end{align*}
In particular, $\bar{\partial} V_\mu^\mathbb{C}=\mu$. This shows that (\ref{eq:correspondence}) is surjective. Injectivity follows immediately from the non-existence of holomorphic vector fields on $X$ without poles.
\end{proof}
\subsection{Vanishing of the Laplacian of the energy}\label{subsec:dg} Denote by $R^M$ the Riemann curvature tensor on $M$.
\begin{lemma}\label{lm:toledo}
    Fix $t_0\in\mathcal{T}_g$. Let $V$ be a vector field on $\mathcal{V}_g$ of type $(1,0)$ transverse to the fibres of $\pi$, such that $V$ is on $\pi^{-1}(t_0)$ a lift of a vector $\mu\in T_{t_0}\mathcal{T}_g$. Then we have 
    \begin{align}\label{eq:lm-toledo}
        \bar{\partial}^{\mathrm{vert.}}d_VF=-\frac{i}{2}\partial^\mathrm{vert.}F\circ\mathcal{L}_V J\text{ and }R^M(d_VF, \partial^\mathrm{vert.} F)=0
    \end{align}
    on $\pi^{-1}(t_0)$ if and only if the Laplacian of the energy $\mathrm{E}_\psi$ vanishes in the direction $\mu$.
\end{lemma}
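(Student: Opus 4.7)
The strategy is to realize the complex Hessian of $\mathrm{E}_\psi$ at $t_0$ in the direction $\mu$ as a sum of two manifestly non-negative integrals over $X_{t_0}$, one built from the quantity $\bar\partial^{\mathrm{vert.}} d_V F + \tfrac{i}{2}\partial^{\mathrm{vert.}} F \circ \mathcal{L}_V J$ and one built from $R^M(d_V F,\partial^{\mathrm{vert.}} F)$. Once such a decomposition is in place, the ``iff'' is immediate: the Laplacian is a sum of non-negative quantities, so it vanishes exactly when each vanishes pointwise, which is precisely the pair of equations in \eqref{eq:lm-toledo}.

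Concretely, I would use $V$ and $\bar V$ as an infinitesimal trivialization of $\mathcal{V}_g$ near $X_{t_0}$, pulling back the energy density from $\pi^{-1}(t)$ to the fixed fibre $X_{t_0}$, so that $\mathrm{E}_\psi$ becomes the integral on $X_{t_0}$ of a density depending on the varying complex structure $J_t$ on the fibres and on $F$ itself. Differentiating twice along $V$ and $\bar V$, one uses Claim \ref{claim:jdot} to rewrite derivatives of $J$ in terms of $\mathcal{L}_V J$, integrates by parts against the harmonicity of $h_{t_0}=F|_{X_{t_0}}$ (this is what kills the cross terms that would otherwise obstruct the decomposition), and repackages the result. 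This reproduces the computation underlying \cite[proof of Theorem 3]{Toledo2011} and \cite[\S\,3]{Tosic2023}, and yields the schematic identity
\begin{equation*}
    (\partial_\mu\bar\partial_\mu \mathrm{E}_\psi)(t_0) = \int_{X_{t_0}} \Bigl|\bar\partial^{\mathrm{vert.}} d_V F + \tfrac{i}{2}\,\partial^{\mathrm{vert.}} F\circ\mathcal{L}_V J\Bigr|^{2}\,\omega_{X_{t_0}} \;+\; \int_{X_{t_0}} \bigl\langle R^M(d_V F,\partial^{\mathrm{vert.}} F)\,\overline{\partial^{\mathrm{vert.}} F},\,\overline{d_V F}\bigr\rangle\,\omega_{X_{t_0}}.
\end{equation*}
Under Assumption \ref{ass:mfd}, the negative Hermitian sectional curvature of $M$ forces the second integrand to be pointwise non-negative, with equality iff $R^M(d_V F,\partial^{\mathrm{vert.}} F)=0$; the first integrand is manifestly non-negative, with equality iff the first equation in \eqref{eq:lm-toledo} holds. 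The two implications of the lemma then follow: the $(\Leftarrow)$ direction is immediate, and for $(\Rightarrow)$, vanishing of the Laplacian forces both non-negative integrands to vanish almost everywhere and hence everywhere by smoothness.

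The main obstacle is bookkeeping rather than content. The heart of the argument is the second-variation computation, which essentially repeats \cite[\S\,3]{Tosic2023} but now phrased in terms of a vector field $V$ on $\mathcal{V}_g$ rather than a Beltrami differential on a fixed fibre; Lemma \ref{lm:correspondence} is the dictionary between the two languages. The delicate point is tracking the factors of $i/2$ and the signs arising in the conversion between $\mathcal{L}_V J$ and the Beltrami form $\mu=\overline{\bar\partial^{\mathrm{vert.}} V}$, as well as justifying the cancellation of the cross terms by harmonicity of $h_{t_0}$; these are, however, purely algebraic and parallel to the computations already worked out in the references.
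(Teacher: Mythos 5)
Your proposal and the paper's proof rest on the same underlying object—the second-variation (Levi) formula for $\mathrm{E}_\psi$ decomposed into a $\bar\partial$-Jacobi term and a curvature term—but they organize the work very differently. The paper does \emph{not} re-derive that decomposition inside Lemma~\ref{lm:toledo}: it cites \cite[Theorem 1.9]{Tosic2023} verbatim for the statement ``$\Delta_\mu\mathrm{E}_\psi=0$ iff $\mu\partial^{\mathrm{vert.}}F=\bar\partial^{\mathrm{vert.}}\xi$ and $R^M(\xi,\partial^{\mathrm{vert.}}F)=0$,'' with $\xi=d_{V_\mu^\mathbb{C}}F$, and then translates the two Beltrami-form conditions into the two vector-field conditions of~\eqref{eq:lm-toledo} using precisely Claim~\ref{claim:jdot} (i.e.\ $\omega\circ\mathcal{L}_{V_\mu^\mathbb{C}}J=2i\mu\omega^{1,0}$) together with the surjectivity of the correspondence in Lemma~\ref{lm:correspondence}, so that any lift $V$ of $\mu$ is some $V_\mu^{\mathbb{C}}$. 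The entire proof is a notational dictionary. Your plan instead proposes to reproduce the second-variation computation directly in the vector-field picture; that is legitimate and gives a self-contained argument, but it is exactly the content already established in \cite[\S3]{Tosic2023}, so it is strictly more work than the paper's route. Two small cautions if you carry it out: (i) what the ``only if'' direction ultimately needs is that vanishing of the curvature integrand forces the tensorial statement $R^M(d_VF,\partial^{\mathrm{vert.}}F)=0$, not merely that a particular Hermitian pairing is zero—this step uses negativity of the Hermitian sectional curvature in a specific way (and the paper's Lemma~\ref{lm:toledo} must also cover the nonpositively curved locally symmetric case of Proposition~\ref{prop:energy-constant-iff-ph}(2), where a different argument is invoked), so the ``manifestly non-negative, vanishes iff $R^M=0$'' claim should be justified rather than asserted; and (ii) the formula you write is pinned to the particular lift $V$, so you still need the observation—made explicitly at the end of the paper's proof and at the start of \S\ref{subsec:pf-lm-only-if}—that the validity of~\eqref{eq:lm-toledo} is independent of the choice of lift, which is supplied by Lemma~\ref{lm:correspondence}.
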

\begin{proof} 
    Set $X=\pi^{-1}(t_0)$. We adopt the notation $V_\mu^\mathbb{C}$ from the proof of Lemma \ref{lm:correspondence} for the inverse of (\ref{eq:correspondence}).
By \cite[Theorem 1.9]{Tosic2023}, we see that the Laplacian of $\mathrm{E}_\psi$ vanishes in the direction defined by a smooth Beltrami form $\mu$ on $X\in\Teich_g$, if and only if 
\begin{align}\label{eq:1}
    \mu\partial^{\mathrm{vert.}}F=\bar{\partial}^{\mathrm{vert.}}\xi\text{ and }R^M(\xi, \partial^{\mathrm{vert.}}F)=0,
\end{align} 
for $\xi=\frac{1}{2}\left(\dot{F}^\mu-i\dot{F}^{i\mu}\right)=d_{V_\mu^{\mathbb{C}}}F$. Here by $\dot{F}^\mu$ denotes the derivative of $F\circ f^{t\mu}$ at $t=0$. Combining with Claim \ref{claim:jdot}, we get that (\ref{eq:1}) is equivalent to 
\begin{align*}
    -\frac{i}{2}\partial^{\mathrm{vert.}}F\circ\mathcal{L}_{V_\mu^\mathbb{C}}J=\bar{\partial}^{\mathrm{vert.}}d_{V_\mu^{\mathbb{C}}}F\text{ and }R^M(d_{V_\mu^\mathbb{C}}F, \partial^{\mathrm{vert.}}F)=0.
\end{align*}
Since any smooth vector field of type $(1,0)$ on $X\subset\Curve_g$ which is transverse to $X$ and projects under $\pi$ to some fixed $[\mu]\in T_X\Teich_g$ can be written in the form $V_\mu^{\mathbb{C}}$ for some smooth Beltrami form $\mu$ on $X$, the result is shown.
\end{proof}
\subsection{``If'' direction}\label{subsec:pf-lm-only-if}
We first claim that if (\ref{eq:lm-toledo}) holds for some vector field $V$, then it also holds for $\tilde{V}=fV+W$, where $W$ is any type $(1,0)$ (vertical) vector field on $X$ and $f:X\to\mathbb{C}$ is any smooth function. This is clear for the second equation in (\ref{eq:lm-toledo}). We show this claim for the first equation in (\ref{eq:lm-toledo}) in two parts. We first check the first equation in (\ref{eq:lm-toledo}) for $V=W$,
\begin{align*}
    \partial^{\mathrm{vert.}}F\circ\mathcal{L}_WJ&=\mathcal{L}_W\left(\partial^{\mathrm{vert.}}F\circ J\right)-\left(\mathcal{L}_W\partial^{\mathrm{vert.}}F\right)\circ J\\
    &=2i\mathcal{L}_W\partial^{\mathrm{vert.}}F\circ \frac{\mathrm{id}+iJ}{2}\\
    &=2i d^{\mathrm{vert.}}\left(\partial_W F\right)\circ\frac{\mathrm{id}+iJ}{2}=2i\bar{\partial}^\mathrm{vert.}d_W F.
\end{align*}
Here we used Cartan's magic formula and the fact that $d^{\mathrm{vert.}}\partial^{\mathrm{vert.}}F=0$ by harmonicity in going from the second to the third line. We now check how (\ref{eq:lm-toledo}) transforms for $\tilde{V}=fV$. We have 
\begin{align*}
    \bar{\partial}^{\mathrm{vert.}}d_{fV}F=f\bar{\partial}^{\mathrm{vert.}}d_VF+d_VF \bar{\partial}f, 
\end{align*}
and 
\begin{align*}
    \partial^{\mathrm{vert.}}F\circ\mathcal{L}_{fV}J&=f\partial^{\mathrm{vert.}}F\circ\mathcal{L}_VJ+2i \partial^{\mathrm{vert.}}F\circ\left(V\bar{\partial}f\right)\\
    &=f\partial^{\mathrm{vert.}}F\circ\mathcal{L}_Vf+2i d_VF\bar{\partial}f.
\end{align*}
where we used the following. 
\begin{claim}
    If $V$ is a type $(1,0)$ vector field, then \[\mathcal{L}_{fV}J=f\mathcal{L}_VJ+2i V\bar{\partial}f.\]
\end{claim}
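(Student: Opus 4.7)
The plan is to verify the claim by a direct computation, expanding $\mathcal{L}_{fV}J$ using the Leibniz-type rule for Lie brackets against a scalar multiple, and then exploiting the fact that $V$ being of type $(1,0)$ means $JV = iV$.

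More concretely, I would first apply both sides to an arbitrary vector field $X$, writing
\begin{align*}
(\mathcal{L}_{fV}J)(X) &= \mathcal{L}_{fV}(JX) - J\,\mathcal{L}_{fV}X \\
&= [fV, JX] - J[fV, X].
\end{align*}
Next, using the standard identity $[fV, X] = f[V,X] - (Xf)V$ for each of the two brackets, I would collect terms to get
\begin{align*}
(\mathcal{L}_{fV}J)(X) = f\,(\mathcal{L}_V J)(X) + (Xf)\,JV - ((JX)f)\,V.
\end{align*}

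The final step uses the $(1,0)$-type hypothesis: $JV = iV$. Substituting gives $(Xf)JV - ((JX)f)V = i(Xf)V - ((JX)f)V$. I would then recognize the right-hand side as $2iV\bar{\partial}f(X)$ via the identity $\bar{\partial}f(X) = \tfrac{1}{2}(Xf + i(JX)f)$, which follows from $\bar{\partial}f = \tfrac{1}{2}(df + i\,df\circ J)$. Combining these three short computations yields the desired equality.

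I do not anticipate any real obstacle here; the argument is purely formal, and the only subtle point is the sign bookkeeping when extracting the $(0,1)$-part of $df$ and matching it to the term $2iV\bar{\partial}f$. Since the claim is local and algebraic in $V$, $J$, and $f$, no analytic input is needed beyond the definitions.
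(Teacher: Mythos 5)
Your proposal is correct and follows essentially the same route as the paper: expand $\mathcal{L}_{fV}J$ via $[fV,\cdot]=f[V,\cdot]-(\cdot f)V$, use $JV=iV$, and recognize the remaining terms as $2iV\bar\partial f$. The sign bookkeeping you flag checks out with $\bar\partial f=\tfrac12(df+i\,df\circ J)$, which gives exactly $2i\bar\partial f(X)=i(Xf)-(JX)f$.
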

\begin{proof}
    This follows from 
    \begin{align*}
        \mathcal{L}_{fV}(J)(W)&=\mathcal{L}_{fV}(JW)-J\mathcal{L}_{fV}W=[fV, JW]-J[fV, W]\\
        &=f[V, JW]-V d_{JW}f-J\left(f[V,W]-Vd_Wf\right)\\
        &=f\mathcal{L}_V(J)(W)+V \left(-d_{JW}f+i d_Wf\right)\\
        &=f\mathcal{L}_V(J)(W)+2iV\left(\bar{\partial}f\right)(W).
    \end{align*}
\end{proof}
\par We now let $(M,\psi)$ be as in Proposition \ref{prop:energy-constant-iff-ph} and suppose that $F_\psi:\Curve_\mathcal{S}\to M$ is pluriharmonic with $R^M(dF_\psi\wedge dF_\psi)=0$. By the previous argument, it suffices to check (\ref{eq:lm-toledo}) locally for any type $(1,0)$ vector field $V$ on $X$ transverse to $X$. We fix a point in $\Curve_\mathcal{S}$ and use local coordinates to check (\ref{eq:lm-toledo}) at that point. Let $(z,t_1,t_2,...,t_n)$ be holomorphic local coordinates on $\mathcal{V}_S$ and $(t_1,t_2,...,t_n)$ be local holomorphic coordinates for $\mathcal{S}$, such that $\pi$ is locally represented by $(z,t_1,t_2,...,t_n)\to (t_1,t_2,...,t_n)$. For $V=\frac{\partial}{\partial t_i}$, the first equation in (\ref{eq:lm-toledo}) holds, since both sides vanish. Similarly, since $R^M(\partial^{\mathrm{vert.}} F_\psi\wedge \partial F_\psi)=0$, we see that 
\begin{align*}
    \sum_{j=1}^n R^M\left(\frac{\partial F_\psi}{\partial z}, \frac{\partial F_\psi}{\partial t_j}\right)dz\wedge dt_j=0,
\end{align*}
and thus $R^M\left(\partial^{\mathrm{vert.}}F, d_{V}F\right)=0$. Therefore by Lemma \ref{lm:toledo} the Laplacian of $\mathrm{E}_\psi$ vanishes identically on $\mathcal{S}$. From the computations in \cite[\S 3.3]{Tosic2023} and \cite[Claim 3]{Tosic2023}, it is easy to see that if the Laplacian of $\mathrm{E}_\psi$ vanishes in the complex direction defined by $\mu$, then $d_{\mu}\mathrm{E}_\psi=0$. Thus $\mathrm{E}_\psi$ is constant on $\mathcal{S}$.
\subsection{``Only if'' direction}\label{subsec:pf-prop-ph}
Note that pluriharmonicity is a local claim, so we may without loss of generality assume that $\mathcal{S}$ is a small holomorphic disk around $t_0\in\mathcal{T}_g$. Since $\pi$ is a holomorphic submersion, we may work locally in coordinates $(z, t)$ on $\mathcal{V}_g$ and $t$ on $\mathcal{S}$ such that $\pi$ is represented locally by the projection $(z,t)\to t$ and such that $t_0$ corresponds to $t=0$. Let the Levi form of $F$ be 
\begin{align*}
    \partial\bar{\partial}F=F_{z\bar{z}}dz\wedge d\bar{z}+F_{t\bar{z}}dt\wedge d\bar{z}+F_{z\bar{t}}dz\wedge d\bar{t}+F_{t\bar{t}}dt\wedge d\bar{t}.
\end{align*}
We will also use the notation $dF=F_tdt+F_{\bar{t}}d\bar{t}+F_zdz+F_{\bar{z}}d\bar{z}$.
\par Since $V=\frac{\partial}{\partial t}$ is a holomorphic vector field, its flow preserves the almost complex structure $J$ on $\mathcal{V}_g$. In particular, from the first equation in the conclusion of Lemma \ref{lm:toledo}, we see that $F_{t\bar{z}}=0$. It immediately follows that $F_{z\bar{t}}=0$.
From the harmonicity of $F$, we see that $F_{z\bar{z}}=0$. Thus 
\begin{align*}
    F_{t\bar{t}}dt\wedge d\bar{t}=\partial \bar{\partial}F=\partial \left(F_{\bar{t}}d\bar{t}\right)=\nabla_t F_{\bar{t}} dt\wedge d\bar{t}.
\end{align*}
It is well-known that the Hessian is symmetric, so we have $\nabla_t F_{\bar{t}}=\nabla_{\bar{t}} F_t$. We therefore have 
\begin{align*}
    \nabla_z F_{t\bar{t}}&=\nabla_z \nabla_t F_{\bar{t}}=\nabla_t \nabla_z F_{\bar{t}}+R^M(F_*\partial_z, F_*\partial_t)F_{\bar{t}}\\
    &=\nabla_t F_{z\bar{t}}+R^M(\partial^{\mathrm{vert.}}F, d_VF)d_{\bar{V}}F=0,
\end{align*}
where the first term is the final sum vanishes since $F_{z\bar{t}}=0$ and the second term vanishes from the second equation in the conclusion of Lemma \ref{lm:toledo}. We analogously have 
\begin{align*}
    \nabla_{\bar{z}} F_{t\bar{t}}=\nabla_{\bar{z}} \nabla_{\bar{t}} F_t=\nabla_{\bar{t}}\nabla_{\bar{z}}F_t+R^M(F_*\partial_{\bar{z}}, F_*\partial_{\bar{t}})F_t=0.
\end{align*}
In particular, $dF_{t\bar{t}}=0$. 
\begin{claim}
    We have $R^M(F_{t\bar{t}}, F_{z})=0$.
\end{claim}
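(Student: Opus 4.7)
The plan is to differentiate the identity $R^M(F_t, F_z) = 0$ along the $\bar{t}$-direction. First I would observe that Lemma \ref{lm:toledo} applies at every $t \in \mathcal{S}$ (because the Laplacian of $\mathrm{E}_\psi$ vanishes throughout $\mathcal{S}$), so the identity $R^M(F_t, F_z) = 0$ in fact holds at every point of $\mathcal{V}_\mathcal{S}$, not merely on the fibre $X_{t_0}$. This global identity is what allows us to differentiate it.

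Applying the pull-back covariant derivative $\nabla_{\bar{t}}$ and using the product rule for $R^M$ viewed as a tensor on $M$ pulled back along $F$, I obtain
\[
0 \;=\; \nabla_{\bar{t}}\bigl[R^M(F_t, F_z)\bigr] \;=\; (\nabla_{F_{\bar{t}}} R^M)(F_t, F_z) + R^M(F_{t\bar{t}}, F_z) + R^M(F_t, F_{z\bar{t}}).
\]
The last term vanishes since $F_{z\bar{t}} = 0$ has already been established, which leaves
\[
R^M(F_{t\bar{t}}, F_z) \;=\; -(\nabla_{F_{\bar{t}}} R^M)(F_t, F_z).
\]

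The remaining task is to show that the right-hand side is also zero. For this I would apply the second Bianchi identity
\[
(\nabla_{F_{\bar{t}}} R^M)(F_t, F_z) + (\nabla_{F_t} R^M)(F_z, F_{\bar{t}}) + (\nabla_{F_z} R^M)(F_{\bar{t}}, F_t) \;=\; 0,
\]
and analyze the remaining two covariant-derivative terms by differentiating the complex-conjugate identity $R^M(F_{\bar{t}}, F_{\bar{z}}) = 0$ along the $t$- and $z$-directions, using the already-established Hessian symmetries $F_{tz} = F_{zt}$, $F_{t\bar{z}} = F_{z\bar{t}} = 0$, and $F_{z\bar{z}} = 0$, together with the antisymmetry of $R^M$ in its first two arguments.

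The main obstacle is the algebraic closure of this computation: the cross-terms generated by the Bianchi expansion tend to reintroduce the operator $R^M(F_{t\bar{t}},\cdot)$ via commutators of covariant derivatives (for instance, from $[\nabla_z,\nabla_{\bar{t}}]F_t = R^M(F_z, F_{\bar{t}})F_t$ together with $\nabla_z F_{t\bar{t}} = 0$ established just above the Claim), so the proof must carefully track these reintroduced terms and show they cancel against each other rather than against the target expression.
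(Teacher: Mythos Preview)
Your differentiation step is correct and is exactly the paper's argument in the locally symmetric case: there $\nabla R^M=0$, so the term $(\nabla_{F_{\bar t}}R^M)(F_t,F_z)$ drops out and you are done. The problem is the other case, where $M$ has strictly negative Hermitian sectional curvature but $\nabla R^M$ need not vanish.

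Your proposed Bianchi route does not close. The Bianchi identity trades $(\nabla_{F_{\bar t}}R^M)(F_t,F_z)$ for $(\nabla_{F_t}R^M)(F_z,F_{\bar t})$ and $(\nabla_{F_z}R^M)(F_{\bar t},F_t)$, i.e.\ for covariant derivatives of $R^M$ evaluated on the pairs $(F_z,F_{\bar t})$ and $(F_{\bar t},F_t)$. But the only curvature identities available to differentiate are $R^M(F_t,F_z)=0$ and its conjugate $R^M(F_{\bar t},F_{\bar z})=0$; differentiating the conjugate along $t$ or $z$ yields information about $(\nabla_{F_t}R^M)(F_{\bar t},F_{\bar z})$ and $(\nabla_{F_z}R^M)(F_{\bar t},F_{\bar z})$, which are the wrong index pairs. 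There is no identity of the form $R^M(F_z,F_{\bar t})=0$ or $R^M(F_t,F_{\bar t})=0$ available, so the system does not close and the ``obstacle'' you flag in your last paragraph is genuine, not merely bookkeeping.

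The paper's fix in the negatively curved case is to use something strictly stronger than $R^M(F_t,F_z)=0$: strict negativity of the Hermitian sectional curvature forces $F_t\wedge F_z=0$, i.e.\ $F_t$ and $F_z$ are collinear in $T^{\mathbb C}M$. Differentiating this wedge identity in $\bar t$ and using $F_{z\bar t}=0$ gives $F_{t\bar t}\wedge F_z=0$, whence $R^M(F_{t\bar t},F_z)=0$ by antisymmetry. Equivalently, collinearity already makes $(\nabla_{F_{\bar t}}R^M)(F_t,F_z)=0$ for free, since $\nabla R^M$ is still antisymmetric in its last two arguments; this is the missing input your argument needs, and it does not follow from Bianchi.
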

\begin{proof}
    If $M$ has strictly negative Hermitian sectional curvature, then $F_t\wedge F_z=0$, so differentiating we get $F_{t\bar{t}}\wedge F_z+F_t\wedge F_{z\bar{t}}=0$, so $F_{t\bar{t}}\wedge F_z=0$ and the result follows. 
    \par If $M$ is locally symmetric of non-positive curvature, then 
    \begin{align*}
        0=\nabla_{\bar{t}} R^M(F_t, F_z)=R^M(F_{t\bar{t}}, F_z)+R^M(F_{t}, F_{z\bar{t}})=R^M(F_{t\bar{t}}, F_z).
    \end{align*}
\end{proof}
\par Now let $W$ be an arbitrary smooth type $(1,0)$ vector field on $\pi^{-1}(\mathcal{S})$, that is a lift of a nowhere vanishing holomorphic vector field on $\mathcal{S}$. Then in the local cooridnates $(z,t)$, we see that $W=V+\alpha(z,t)\partial_z$, and hence 
\begin{align*}
    F_{W\bar{W}}=F_{t\bar{t}}+\abs{\alpha}^2 F_{z\bar{z}}+\alpha F_{z\bar{t}}+\bar{\alpha}F_{t\bar{z}}=F_{t\bar{t}},
\end{align*}
and hence we see that 
\begin{align}\label{eq:F_WW}
    d F_{W\bar{W}}=0\text{ and }R^M(F_{W\bar{W}},\partial^{\mathrm{vert.}}F)=0.
\end{align}
\par If $M$ is locally symmetric of non-positive curvature, then by a result of Sunada \cite{Sunada1979}, the harmonic map realizing $\psi_*:\pi_1(\Sigma_g)\to \mathrm{Isom}(\tilde{M})$ is not unique unless $F_{W\bar{W}}=0$. However uniqueness follows from the fact that $\psi_*$ has a trivial stabilizer. Therefore $F_{W\bar{W}}=0$ and the result is shown in this case.
\par Suppose now that $M$ has negative Hermitian sectional curvature, and that $\psi$ is not homotopic into a graph. We want to show that $F_{W\bar{W}}=0$, so since $dF_{W\bar{W}}=0$ it suffices to show that $F_{W\bar{W}}$ vanishes somewhere on every fibre of $\pi:\pi^{-1}(\mathcal{S})\to\mathcal{S}$. Suppose therefore that $F_{W\bar{W}}$ is nowhere zero on $\pi^{-1}(t_0)$, and set $f=F|_{\pi^{-1}(t_0)}$.
\par Let $L$ be the rank one subbundle of $f^*TM$ generated by $F_{W\bar{W}}$. Then the second equation in (\ref{eq:F_WW}) implies that $\partial f$ takes values in $L\otimes\mathbb{C}$. In particular, we see that the image of $df$ is contained in $L$. Therefore we get that 
\begin{align*}
    df=\omega F_{W\bar{W}},
\end{align*}
where $\omega$ is a harmonic 1-form on $\pi^{-1}(t_0)$. In particular, $df$ has rank at most $1$, and hence by a result of Sampson \cite{Sampson1978}, the image of $f$ is contained in a geodesic arc. Since the domain of $f$ is closed, this arc is a closed curve, which contradicts the assumption on $\psi$.
\subsection{Proof of Corollary \ref{cor:hol-structure}}\label{subsec:pf-cor-hol-structure}
We first show (\ref{item:cor-hol-constr-bundle}).
Let $\bar{\partial}^E$ be the $(0,1)$-part of the pullback of the Levi--Civita connection on $TM$ to $E:=F_\psi^*TM\otimes\mathbb{C}$. We work in local coordinates $(z,t_1,t_2,...,t_m)$ on $\Curve_\mathcal{S}$ and $(t_1,t_2,...,t_m)$ on $\mathcal{S}$, such that the projection $\pi$ locally takes the form $(z,t_1,t_2,...,t_m)\to (t_1,t_2,...,t_m)$.
\par By Proposition \ref{prop:energy-constant-iff-ph}, we see that $R^M(\partial_z F_\psi, \partial_{t_i}F_\psi)=0$. Since $M$ has negative Hermitian sectional curvature, this implies that $\partial_z F_\psi\wedge \partial_{t_i}F_\psi=0$. On the set of points where $\partial_z F_\psi\neq 0$, this implies that $d_V F_\psi\wedge d_W F_\psi=0$, for any two type $(1,0)$ vectors $V, W$. Note that $\partial_z F_\psi$ has isolated zeros on any $\pi^{-1}(t)$ for $t\in\mathcal{S}$ \cite[pp. 10]{Schoen1997}. By continuity, we therefore have $d_VF_\psi\wedge d_WF_\psi=0$ for any two $(1,0)$ vectors $V, W$ at any point in $\Curve_\mathcal{S}$. In particular, $R^M(\partial F_\psi\wedge\partial F_\psi)=0$.
\par Conjugating, we see that $R^M(\bar{\partial}F_\psi\wedge\bar{\partial}F_\psi)=0$, and since 
\begin{align*}
    \bar{\partial}^E\circ\bar{\partial}^E=R^M(\bar{\partial}F_\psi\wedge\bar{\partial}F_\psi)=0,
\end{align*}
finishing the proof of (\ref{item:cor-hol-constr-bundle}) by the Koszul--Malgrange theorem.
\par We now turn to (\ref{item:cor-hol-constr-line}). From Proposition \ref{prop:energy-constant-iff-ph}, and the strict negativity of the Hermitian sectional curvature of $M$, we see that 
\begin{align*}
    \partial F_\psi|_{\ker\pi_*}\wedge\partial F_\psi=0.
\end{align*}
This implies that over a point where $\partial F_\psi|_{\ker\pi_*}\neq 0$, we have the stronger $\partial F_\psi\wedge\partial F_\psi=0$. Thus by continuity we see that $\partial F_\psi\wedge\partial F_\psi=0$ on all of $\Curve_\mathcal{S}$. We now get a line bundle $L\leq E$ over $\Curve_\mathcal{S}\setminus\mathbb{V}(\partial F_\psi)$ generated by the image of $\partial F_\psi$. By \cite[Theorem 4 (2)]{Toledo2011}, we see that $L$ extends to $\Curve_\mathcal{S}$. Finally, since the Chern connection on $E$ is by construction the pullback of the Levi--Civita connection on $M$, we have 
\begin{align*}
    d^{\nabla}\partial F_\psi=\bar{\partial}\partial F_\psi=0.
\end{align*}
The Chern connection on $L$ is the orthogonal projection of the Chern connection on $E$ to $L$, so $\partial F_\psi\in\Omega^1(L)$ is closed as well.
\par Finally, we show (\ref{item:cor-hol-constr-foliation}). Let $U\subset\Curve_\mathcal{S}$ be an open set over which $L$ is trivial, and let $\tau:L|_U\to\mathbb{C}$ be some (holomorphic) trivialization. Let $\Gamma$ be the local connection 1-form with respect to this trivialization. Since $\partial F_\psi$ is closed, we see that 
\begin{align*}
    d\tau(\partial F_\psi)+\Gamma\wedge\tau(\partial F_\psi)=0.
\end{align*}
In particular, the distribution defined by vanishing of the components of $\tau(\partial F_\psi)$ is integrable by the Frobenius theorem \cite[Theorem II.1.1]{Bryant1991} wherever its rank is locally constant. It follows that there exists a holomorphic codimension $1$ foliation $\mathcal{F}$ over $\Curve_\mathcal{S}\setminus\mathbb{V}(\partial F_\psi)$ with $T\mathcal{F}=\ker\partial F_\psi$. 
\par Suppose that $\dim\mathcal{S}=d$, so that $\dim\Curve_\mathcal{S}=d+1$. We write $\mathcal{A}=\mathbb{V}(\partial F_\psi)$ as a disjoint union 
\begin{align*}
    \mathcal{A}=\mathcal{A}^\mathrm{sng.}\cup\mathcal{A}^\mathrm{reg.}=\mathcal{A}^\mathrm{sng.}\cup\bigcup_{i=0}^d \mathcal{A}_i,
\end{align*}
where \begin{enumerate}
    \item $\mathcal{A}^\mathrm{sng.}$ is the singular part of $\mathcal{A}$, 
    \item $\mathcal{A}^\mathrm{reg.}=\mathcal{A}\setminus\mathcal{A}^\mathrm{sng.}$ is the regular (smooth) part of $\mathcal{A}$, and 
    \item $\mathcal{A}^\mathrm{reg.}=\bigcup_{i=0}^d \mathcal{A}_i$ is the decomposition of $\mathcal{A}^\mathrm{reg.}$ into the union of its $i$-dimensional connected components $\mathcal{A}_i$.
\end{enumerate}
We now let $\Delta=\mathcal{A}^\mathrm{sng.}\cup\bigcup_{i=0}^{d-1}\mathcal{A}_i\cup\Delta'\cup\Delta''$, where \begin{enumerate}
    \item $\Delta'$ is the subset of $\mathcal{A}_d$ where the holomorphic map $\pi|_{\mathcal{A}_d}:\mathcal{A}_d\to \mathcal{S}$ is not a local biholomorphism, and 
    \item $\Delta''$ is the subset of $\mathcal{A}_d$ where the order of vanishing of $\partial^\mathrm{vert.}F_\psi$ is not locally constant.
\end{enumerate}
It follows from classical complex geometry that $\dim\Delta\leq d-1$. Note that $\mathcal{F}$ is defined on $\Curve_\mathcal{S}\setminus\mathcal{A}$, and we now show how to extend it to $\mathcal{A}\setminus\Delta=\mathcal{A}_d\setminus(\Delta'\cup\Delta'')$.
\par Fix some point $x\in\mathcal{A}_d\setminus(\Delta'\cup\Delta'')$. Pick a coordinate system $\Phi=(z,t_1,t_2,...,t_d):U\to \mathbb{C}^{d+1}$ on an open set $x\in U\subset\Curve_\mathcal{S}$ such that $\Phi\left(\mathcal{A}_d\cap U\right)=\mathbb{V}(z)$ and $\Phi(x)=(0,0,...,0)$. By choice of $\Delta'$, we may suppose without loss of generality that there is a coordinate system $\Theta=(t_1,t_2,...,t_d):\pi(U)\to\mathbb{C}^d$ such that 
\begin{align*}
    \Theta\circ\pi\circ\Phi^{-1}(z,t_1,t_2,...,t_d)=(t_1,t_2,...,t_d).
\end{align*}
We also choose a trivialization $\tau:L|_U\to\mathbb{C}$. Observe that $\tau(\partial F_\psi)$ vanishes along $\{z=0\}$, so can be written locally as 
\begin{align*}
    \Phi_*\tau(\partial F_\psi)=z^n f dz+\sum_{j=1}^d g_j dt_j.
\end{align*}
By choice of $\Delta''$, we may assume that $f(0,0,...,0)\neq 0$. Shrink $U$ if necessary so that $f\neq 0$ on $\Phi(U)$. Let the local connection 1-form for $L$ in the trivialization $\tau$ be $\Gamma=\Gamma_0 dz+\sum_{j=1}^d\Gamma_j dt_j$. The fact that $\partial F_\psi$ is closed implies that
\begin{align}\label{eq:closed-local}
    z^{n} \left(\frac{\partial f}{\partial t_j}+\Gamma_j f\right)=\frac{\partial g_j}{\partial z}+\Gamma_0 g_j\text{ for }j\geq 1.
\end{align}
Note that, by assumption, $\partial F_\psi$ vanishes along $\mathbb{V}(z)$. In particular, $g_j(0,t_1,t_2,...)=0$. Taking derivatives in $z$ of (\ref{eq:closed-local}), we get $\frac{\partial^k g_j}{\partial z^k}=0$ along $\mathbb{V}(z)$ for $0\leq k\leq n$. Thus we can write $g_j=z^{n+1} h_j$, and hence
\begin{align*}
    \Phi_*\tau(\partial F_\psi)=z^{n}\left(fdz+z\sum_{j\geq 1}h_j dt_j\right).
\end{align*}
We now observe that $\Phi_*\mathcal{F}$ coincides with $\ker\omega$ away from $\mathbb{V}(z)$, where 
\begin{align*}
    \omega=fdz+z\sum_{j\geq 1}h_j dt_j.
\end{align*}
We will show that the distribution defined by $\ker\omega$ is integrable in $\Phi(U)$, which implies that $\mathcal{F}$ extends over $\mathcal{A}_d\setminus(\Delta'\cup\Delta'')$, and that moreover every connected component of $\mathcal{A}_d\setminus(\Delta'\cup\Delta'')$ is a leaf.
\par Note that 
\begin{align*}
    d\omega=d(z^{-n}\Phi_*\tau(\partial F_\psi))&=-\Gamma\wedge\omega -n\frac{dz}{z}\wedge\omega\\
    &=-\Gamma\wedge\omega-n\frac{\omega-z\sum_{j\geq 1}h_j dt_j}{zf}\wedge\omega \\
    &=\left(\sum_{j\geq 1}\frac{h_j}{f}dt_j-\Gamma\right)\wedge\omega.
\end{align*}
By the Frobenius integrability theorem, it follows that $\ker\omega$ is integrable, which concludes the proof of Corollary \ref{cor:hol-structure} (\ref{item:cor-hol-constr-foliation}).
\section{Schiffer varieties}\label{sec:schiffer}
For a representation $\rho:\pi_1(\Sigma_g)\to\mathrm{PSL}(2,\mathbb{R})$, let $\Lambda_\rho\subset\Teich_g$ be the set of marked Riemann surfaces $X$ whose universal cover $\tilde{X}$ admits a $\rho$-equivariant holomorphic map $\tilde{X}\to\mathbb{H}$. Let $\tilde{\Curve}_g$ be the universal cover of $\Curve_g$, and let $\tilde{\Curve}_\mathcal{S}$ be the preimage of $\Curve_\mathcal{S}$ under this universal cover, where $\mathcal{S}\subset\Teich_g$ is a submanifold. Our main result in this section is the following. 
\begin{proposition}\label{prop:schiffer}
    For any representation $\rho:\pi_1(\Sigma_g)\to\mathrm{PSL}(2,\mathbb{R})$ of positive Euler class, the subset $\Lambda_\rho\subset\Teich_g$ is a locally finite union of connected closed submanifolds of $\Teich_g$. Moreover, there exists a $\rho$-equivariant holomorphic map $F:\tilde{\Curve}_{\Lambda_\rho}\to\mathbb{H}$.
\end{proposition}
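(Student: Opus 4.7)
The plan is to express $\Lambda_\rho$ as the vanishing locus of the Hopf differential of the $\rho$-equivariant harmonic map. Since $\rho$ has positive Euler class it is reductive, so for each $X \in \Teich_g$ there is a unique $\rho$-equivariant harmonic map $h_X : \tilde X \to \mathbb H$ depending smoothly on $X$. The Hopf differential $Q_X := (h_X^* g_{\mathbb H})^{2,0} \in H^0(X, K_X^{\otimes 2})$ is holomorphic by harmonicity, and vanishes identically if and only if $h_X$ is holomorphic (the antiholomorphic alternative is ruled out by the positivity of the Euler class, which fixes the orientation). Thus $X \mapsto Q_X$ defines a smooth section of the holomorphic vector bundle of quadratic differentials over $\Teich_g$, and $\Lambda_\rho = Q^{-1}(0)$ is a closed subset.

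To endow $\Lambda_\rho$ with a complex submanifold structure, the plan is to combine the non-abelian Hodge correspondence with the Hessian computations from \cite{Tosic2023}. Under non-abelian Hodge on $X$, the representation $\rho$ corresponds to a Higgs bundle $(V_X, \phi_X)$, and $h_X$ being holomorphic is equivalent to $\phi_X$ being nilpotent. At $X \in \Lambda_\rho$, the formulas of \cite{Tosic2023} express the Hessian of $\mathrm{E}_\rho$ explicitly in Higgs-theoretic terms, and the nilpotence of $\phi_X$ forces the nullspace $N_X \subset T_X \Teich_g$ of $\mathrm{Hess}(\mathrm{E}_\rho)_X$ to be a complex subspace which equals $T_X \Lambda_\rho$. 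Combined with real-analyticity of $X \mapsto Q_X$ and a standard stratification argument giving locally constant rank of $dQ$, the implicit function theorem on each stratum then yields the complex submanifold structure. Local finiteness should follow from a de Franchis--Severi type compactness statement: only finitely many homotopy classes of $\rho$-equivariant holomorphic maps $\tilde X \to \mathbb H$ occur as $X$ ranges over any compact subset of $\Teich_g$, essentially by a Schwarz-lemma bound on the energy combined with Arzel\`a--Ascoli.

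Finally, the smoothly varying family $\{h_X : X \in \Lambda_\rho\}$ patches to a smooth $\rho$-equivariant map $F : \tilde \Curve_{\Lambda_\rho} \to \mathbb H$, which is holomorphic on each fiber $\tilde X$ by construction. To verify joint holomorphicity, let $V$ be a local lift to $\tilde\Curve_{\Lambda_\rho}$ of a holomorphic tangent direction $\mu \in T_X \Lambda_\rho$. The identity of Lemma \ref{lm:toledo}, combined with $Q_X = 0$ (so that $\partial^{\mathrm{vert.}} F \wedge \partial^{\mathrm{vert.}} F = 0$) and an argument parallel to the one in \S\ref{subsec:pf-lm-only-if}, yields $\bar\partial^{\mathrm{vert.}} d_V F = 0$; since $F$ is already holomorphic along the fibers, this upgrades to $\bar\partial F = 0$.

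The main obstacle is the middle paragraph: verifying that the zero locus of the smooth section $Q$ is in fact a locally finite union of complex submanifolds. Because $Q$ is only smooth and not holomorphic, its vanishing locus has no a priori complex analytic structure, and the natural complex structure on $T_X \Lambda_\rho$ has to be extracted from the explicit Higgs-theoretic Hessian formula of \cite{Tosic2023} together with the degeneration of that formula at nilpotent Higgs fields. The indiscrete case of $\rho$ is an additional subtlety, since one cannot describe $\Lambda_\rho$ via maps to a hyperbolic orbifold quotient, so the Higgs-theoretic argument is essential.
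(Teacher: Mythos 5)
Your proposal uses the same toolkit as the paper — the Hopf differential characterization of $\Lambda_\rho$, the non-abelian Hodge correspondence, and the Hessian computations from \cite{Tosic2023} — but the architecture is different, and the step you wave off as ``standard'' is in fact the crux of the argument.

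\textbf{The main gap.} You characterize $\Lambda_\rho$ as $Q^{-1}(0)$ for a smooth (not holomorphic) section $Q$ of the bundle of quadratic differentials and propose to obtain the submanifold structure by ``a standard stratification argument giving locally constant rank of $dQ$'' plus the implicit function theorem. There is no such standard argument: for a merely smooth section, the rank of $dQ$ along its zero set is not locally constant in general, and establishing locally constant rank here is precisely the content of the proposition. The paper sidesteps the zero-set point of view entirely. It works instead with the distribution $\mathcal{K}=\mathrm{NAH}^*\tilde{\mathcal{K}}$ on the total space $\mathcal{E}(g,k)$ of the moduli space of harmonic maps, where $\tilde{\mathcal{K}}_{(X,\rho)}$ is the nullspace of the Levi form of $\mathrm{E}_\rho$. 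It shows $\mathcal{K}$ is tangent to the zero section $\mathcal{Z}(g,k)$, and then proves that $\mathcal{K}$ has \emph{constant} rank along $\mathcal{Z}(g,k)$ by an explicit cohomological computation: for a Higgs bundle $(L\oplus L^{-1},\phi)$ with nilpotent $\phi\in H^0(L^2K)$, the condition $\mu\phi=\bar\partial\xi$ for $\xi\in\Gamma(L^2)$ is Serre-dual to a pairing against $H^0(KL^{-2})$, and Riemann--Roch gives $\dim\mathcal{K}_p=2g-2-k$ independently of $p$. Constant rank together with the identification $\mathcal{K}=\mathrm{NAH}^*\ker d\mathcal{R}$ then gives integrability by Frobenius, and the connected components of $\Lambda_\rho$ appear as leaves of the resulting foliation, hence as complex submanifolds. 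You gesture at ``the nilpotence of $\phi_X$ forc[ing] the nullspace to be complex,'' which is true, but you do not supply the rank computation, and without it the IFT-on-strata plan has nothing to stand on.

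\textbf{Local finiteness.} Your de Franchis--Severi compactness argument breaks down exactly where you flag a worry: for indiscrete $\rho$ there is no target Riemann surface to which to apply finiteness of holomorphic maps. The paper's argument is much more elementary and works uniformly: since
\[
\mathrm{E}_\rho(X)-2\pi\,\mathrm{eu}(\rho)=2\int_X \bigl|\bar\partial f\bigr|^2\,d\mathrm{area}_X,
\]
one has $\Lambda_\rho=\mathrm{E}_\rho^{-1}(2\pi\,\mathrm{eu}(\rho))$, and $\mathrm{E}_\rho$ is real analytic, so $\Lambda_\rho$ is a real analytic set; local finiteness of its components is then automatic. You should use this instead.

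\textbf{Holomorphicity of $F$.} Your final paragraph matches the paper's argument in \S\ref{subsec:holomorphy}: pluriharmonicity from Proposition \ref{prop:energy-constant-iff-ph} plus the conformality relation $\langle F_z\otimes F_z\rangle=0$ (i.e.\ $Q_X=0$) forces $\langle F_{t_i}\otimes F_{t_j}\rangle=0$ and $\langle F_{t_i}\otimes F_z\rangle=0$, so $F$ is holomorphic on every complex disk. This part is fine.

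In summary: the tools are right, the holomorphicity step is right, but the claim that a stratification argument gives locally constant rank of $dQ$ hides the entire technical content of the proposition. You need the explicit constant-rank computation on the zero section $\mathcal{Z}(g,k)$ via Serre duality and Riemann--Roch, and you need to replace de Franchis--Severi by the real-analyticity of $\mathrm{E}_\rho$ to handle indiscrete $\rho$.
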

We use the techniques of \cite{Tosic2023} to show Proposition \ref{prop:schiffer}. We now briefly outline the proof of Proposition \ref{prop:schiffer}. Let $\mathcal{E}(g,k)$ be the moduli space of all harmonic maps $\tilde{X}\to\mathbb{H}$, equivariant with respect to a representation of Euler class $k$, for a varying marked Riemann surface $X\in\Teich_g$. This space is naturally a holomorphic fibration over Teichm\"uller space, that we will describe in \S\ref{subsec:nah} and show in Appendix \ref{appendix}. Let $\mathcal{Z}(g,k)\subset\mathcal{E}(g,k)$ be the subset that consists of holomorphic maps. 
\par Define a distribution $\mathcal{K}$ on $\mathcal{E}(g,k)$ as follows. The non-abelian Hodge correspondence provides a diffeomorphism $\mathrm{NAH}:\mathcal{E}(g,k)\to\Teich_g\times\chi_{g,k}$. Here $\chi_{g,k}$ denotes the connected component of the representation variety $\pi_1(\Sigma_g)\to\mathrm{PSL}(2,\mathbb{R})$ that consists of representations with Euler class $k$. Now define a distribution $\tilde{\mathcal{K}}$ on $\Teich_g\times\chi_{g,k}$ as $\tilde{\mathcal{K}}_{(X, \rho)}=\{(\mu,0)\in T_X\Teich_g\oplus T_\rho\chi_{g,k}:\Delta_\mu\mathrm{E}_\rho(X)=0\}$. We then let $\mathcal{K}=\mathrm{NAH}^*\tilde{\mathcal{K}}$. Note that this distribution was previously studied by the author in \cite{Tosic2023}.
\par We then show that $\mathcal{K}$ restricts to an integrable distribution on $\mathcal{Z}(g,k)$. Let the corresponding foliation of $\mathcal{Z}(g,k)$ be $\mathcal{F}$. Then the connected components of $\Lambda_\rho$ are the leaves of $\mathrm{NAH}_*\mathcal{F}$ contained in $\Teich_g\times\{\rho\}$. Local finiteness of this set of leaves then follows easily from the fact that $\Lambda_\rho$ is a real analytic subset of $\Teich_g$.
\subsection{Non-abelian Hodge over a moving Riemann surface}\label{subsec:nah}
We closely follow \cite[\S 10]{Hitchin1987}. 
\par An $\mathrm{SL}(2,\mathbb{R})$-Higgs bundle over a marked Riemann surface $X\in\Teich_g$ is a triple $(L,\alpha,\delta)$, where $L$ is a holomorphic line bundle, $\alpha\in H^0(L^2K_X)$ and $\delta\in H^0(L^{-2}K_X)$. The corresponding Higgs bundle is $E=L\oplus L^{-1}$ with the field 
\begin{align*}
    \phi=\begin{pmatrix}
        0 & \alpha \\
        \delta & 0
    \end{pmatrix}.
\end{align*} 
Suppose that $L$ is positive. Then this Higgs bundle is stable if and only if $\delta\neq 0$. Note that $\phi$ is gauge equvialent to $\begin{pmatrix}
    0 & c\alpha \\
    c^{-1}\delta & 0
\end{pmatrix}$ for any $c\in\mathbb{C}\setminus\{0\}$. Thus the map $(L,\alpha,\delta)\to (\mathrm{div}(\delta), \alpha\delta)$ provides an isomorphism between the moduli space of Higgs bundles over $X$ of Euler class $k=\deg L^2$, and the space of $(D,\Phi)\subset \Sym^{2g-2-k}X\times H^0(K_X^2)$ where $\mathrm{div}(\Phi)\geq D$. This was shown by Hitchin in \cite[\S 10]{Hitchin1987}.
\par This construction carries over to the case of a moving Riemann surface without much difficulty. In this section we state our most general result, and then we prove it for the sake of completeness in Appendix \ref{appendix}.
\par Recall that $\pi:\Curve_g\to\Teich_g$ is the universal curve over Teichm\"uller space $\Teich_g$, so that the fibre over $X\in\Teich_g$ is isomorphic to $X$. Define $\Sym^n\pi:\Sym^n\Curve_g\to\Teich_g$ to be the bundle over $\Teich_g$, whose fibre over $X$ is $\Sym^n X$. We also let $\mathcal{H}_g$ be the vector bundle over $\Teich_g$, such that the fibre over $X\in\Teich_g$ naturally parameterizes $H^0(K_X^2)$. This bundle is the $\pi$-pushforward of the square of the relative canonical bundle $K_\pi\to\Curve_g$.
\begin{definition}\label{dfn:egk}
    The space $\mathcal{E}(g,k)$ is the subhseaf of $(\Sym^n\pi)^*\mathcal{H}_g$ whose fibre over $X\in\Teich_g$ consists of pairs $(D,\Phi)\in\Sym^{2g-2-k}X\times H^0(K_X^2)$ such that $\mathrm{div}(\Phi)\geq D$.
\end{definition}
Using the Riemann--Roch theorem, it is easy to see that the subsheaf $\mathcal{E}(g,k)$ has constant rank, and is thus a vector bundle. Alternatively, in Appendix \ref{appendix} we will give a different construction of $\mathcal{E}(g,k)$ in terms of the more classical operations in complex geometry that shows immediately that it is a vector bundle. \par The non-abelian Hodge correspondence provides a diffeomorphism between the fibre $\mathcal{E}(g,k)_X$ and $\chi_{g,k}$, that varies smoothly with the underlying Riemann surface $X\in\Teich_g$. We state this as Theorem \ref{thm:nah} below, whose proof we include in Appendix \ref{appendix}, for completeness.
\begin{theorem}\label{thm:nah}
    The non-abelian Hodge correspondence provides a diffeomorphism $\mathrm{NAH}:\mathcal{E}(g,k)\to\Teich_g\times\chi_{g,k}$.
\end{theorem}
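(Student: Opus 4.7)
The plan is to construct $\mathrm{NAH}$ by fibrewise application of Hitchin's non-abelian Hodge correspondence on a fixed Riemann surface, and then upgrade the fibrewise bijection to a smooth diffeomorphism by invoking the smooth dependence of equivariant harmonic maps on the complex structure. On a fixed $X\in\Teich_g$, I would unpack the identification recalled just before Definition~\ref{dfn:egk}: from a pair $(D,\Phi)\in\mathcal{E}(g,k)_X$, pick any line bundle $L$ of degree $(k+2g-2)/2$ with $L^{-2}K_X\cong\mathcal{O}(D)$; then $D$ is cut out by some $\delta\in H^0(L^{-2}K_X)$, and $\alpha=\Phi/\delta\in H^0(L^2K_X)$ assembles with $\delta$ into a stable $\mathrm{SL}(2,\mathbb{R})$-Higgs bundle $(L\oplus L^{-1},\phi)$ of Euler class $k$. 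The $\mathbb{C}^\times$-gauge ambiguity in $(L,\delta)$ is precisely what the pair $(D,\alpha\delta)$ records, so the association is canonical; applying the classical non-abelian Hodge correspondence on $X$ in rank two \cite{Hitchin1987} turns this Higgs bundle into a reductive flat $\mathrm{PSL}(2,\mathbb{R})$-connection whose holonomy class $\rho$ lies in $\chi_{g,k}$, and I set $\mathrm{NAH}(X,D,\Phi)=(X,\rho)$.

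For the inverse map and the verification of bijectivity, I would use equivariant harmonic maps. Given $(X,\rho)\in\Teich_g\times\chi_{g,k}$, the Corlette--Donaldson theorem produces a unique $\rho$-equivariant harmonic map $u_X:\tilde X\to\mathbb{H}$; its Hopf differential $\Phi\in H^0(K_X^2)$ together with the zero divisor $D$ of $\partial u_X$ (a section of an appropriate line bundle on $X$ of degree $2g-2-k$) yields a point of $\mathcal{E}(g,k)_X$. That this is a two-sided inverse on each fibre is the standard dictionary between Hermitian--Einstein metrics on $(L\oplus L^{-1},\phi)$ and $\rho$-equivariant harmonic maps to $\mathbb{H}$, with the Hopf differential on the harmonic side matching $\alpha\delta$ on the Higgs side.

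The main obstacle is establishing that both constructions vary smoothly as $X\in\Teich_g$ varies, not merely that they agree fibrewise. For the inverse direction, smoothness of $(X,\rho)\mapsto u_X$ is exactly the parameter-dependence statement for harmonic maps already cited in the paper \cite{Eells1981,Slegers2021,Tosic2023}, and the Hopf differential and the zero divisor are extracted from $u_X$ by holomorphic operations on the universal curve, so $\mathrm{NAH}^{-1}$ is smooth. Smoothness of $\mathrm{NAH}$ itself then follows automatically, since a bijective smooth map between finite-dimensional manifolds with smooth inverse is a diffeomorphism; alternatively, one can argue directly by smooth dependence of solutions of Hitchin's equations on $(X,L,\alpha,\delta)$ via the elliptic implicit function theorem. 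The appendix proof should therefore focus on assembling the bundle structure on $\mathcal{E}(g,k)$ and invoking the quoted harmonic-map regularity in families, rather than re-deriving the PDE parameter-dependence argument from scratch.
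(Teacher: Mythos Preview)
Your outline matches the paper's architecture: build the map fibrewise from Hitchin's $\mathrm{SL}(2,\mathbb{R})$ correspondence, then upgrade to a global diffeomorphism via a smoothness-in-families argument. The ``alternative'' you mention at the end---smooth dependence of solutions to Hitchin's equations via the elliptic implicit function theorem---is precisely what the paper carries out (Claim~\ref{claim:smooth} applied to the scalar reduction of the self-duality equation on $L$), followed by a short fibration lemma saying that a smooth bundle map which is a diffeomorphism on every fibre is a global diffeomorphism.

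Your primary route, showing smoothness of $\mathrm{NAH}^{-1}$ via harmonic-map regularity, is a legitimate variant and arguably cleaner, but the sentence ``smoothness of $\mathrm{NAH}$ itself then follows automatically, since a bijective smooth map \ldots\ with smooth inverse is a diffeomorphism'' is circular as written: you have not yet shown $\mathrm{NAH}$ is smooth, so you cannot invoke that. What you should say instead is: $\mathrm{NAH}^{-1}$ is smooth and restricts to a diffeomorphism on each fibre $\{X\}\times\chi_{g,k}$ by Hitchin, hence by the fibration lemma (the paper's Step~4) $\mathrm{NAH}^{-1}$ is a global diffeomorphism, and $\mathrm{NAH}$ is its smooth inverse. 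With that fix, your approach and the paper's are dual to one another---one works on the Dolbeault side, the other on the de Rham side---and either suffices.

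There are two further gaps. First, a numerical slip: the condition $L^{-2}K_X\cong\mathcal{O}(D)$ with $\deg D=2g-2-k$ forces $\deg L=k/2$, not $(k+2g-2)/2$. Second, and more substantively, you never address the case of \emph{odd} Euler class. Your construction of the $\mathrm{SL}(2,\mathbb{R})$ Higgs bundle $(L\oplus L^{-1},\phi)$ requires $\deg L=k/2\in\mathbb{Z}$, which fails for $k$ odd; the representation is then genuinely $\mathrm{PSL}(2,\mathbb{R})$ and does not lift. The paper handles this by pulling back along an unramified double cover $\Sigma_{2g-1}\to\Sigma_g$, applying the even case there, and descending the resulting $\mathrm{PSL}(2,\mathbb{C})$-Higgs bundle and harmonic metric. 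Your proposal needs an analogous step.
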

\subsection{Foliation}
The following lemma easily implies Proposition \ref{prop:schiffer}. Recall that $\mathcal{K}=\mathrm{NAH}^*\tilde{\mathcal{K}}$, where $\tilde{\mathcal{K}}$ is the distribution on $\Teich_g\times\chi_{g,k}$, defined on the slice $\Teich_g\times\{\rho\}$ as the kernel of the Levi form of $\mathrm{E}_\rho$. The distribution $\mathcal{K}$ is smooth by \cite[\S 4]{Tosic2023} and Theorem \ref{thm:nah}.
\begin{lemma}\label{lemma:integrable}
    The zero section $\mathcal{Z}(g,k)$ in $\mathcal{E}(g,k)$ admits a foliation $\mathcal{F}$ with $T\mathcal{F}|_{\mathcal{Z}(g,k)}=\mathcal{K}|_{\mathcal{Z}(g,k)}$. The leaves of $\mathcal{F}$ take the form $\mathrm{NAH}^{-1}(\Lambda\times\{\rho\})$, where $\rho:\pi_1(\Sigma_g)\to\mathrm{PSL}(2,\mathbb{R})$ is a non-elementray representation and $\Lambda\subset\Teich_g$ is a complex submanifold. For any such leaf, there exists a $\rho$-equivariant holomorphic map $F:\tilde{\Curve}_\Lambda\to \mathbb{H}$.
\end{lemma}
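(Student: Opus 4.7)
The plan is to combine the non-abelian Hodge identification of Theorem \ref{thm:nah} with the equivariant second-variation formulas from \cite{Tosic2023} to exhibit the leaves of $\mathcal{F}$ directly as the sets $\Lambda_\rho$. First I would identify $\mathrm{NAH}(\mathcal{Z}(g,k))$: the zero section corresponds to Higgs data with vanishing Hopf differential $\alpha\delta$, and since stability forces $\delta\neq 0$, this means $\alpha = 0$. The classical correspondence then identifies such data with pairs $(X,\rho)$ where the $\rho$-equivariant harmonic map $\tilde{X}\to\mathbb{H}$ is $\pm$-holomorphic. Hence
\[
\mathrm{NAH}(\mathcal{Z}(g,k))=\bigsqcup_{\rho}\Lambda_\rho\times\{\rho\},
\]
where the union ranges over those $\rho$ with $\Lambda_\rho\neq\emptyset$; such $\rho$ are necessarily non-elementary since a holomorphic equivariant map $\tilde{X}\to\mathbb{H}$ has non-abelian monodromy.

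Next I would fix such a $\rho$ and an $X_0\in\Lambda_\rho$. The main step is to show that $\Lambda_\rho$ is a closed complex submanifold of $\Teich_g$ near $X_0$ with $T_{X_0}\Lambda_\rho$ equal to the $T_{X_0}\Teich_g$-slice of $\tilde{\mathcal{K}}_{(X_0,\rho)}$. One inclusion is immediate: $\mathrm{E}_\rho$ is constant on each connected component of $\Lambda_\rho$ (since the energy of a holomorphic map is a topological invariant), so $T_{X_0}\Lambda_\rho$ is annihilated by the complex Hessian of $\mathrm{E}_\rho$. For the reverse inclusion and the smooth structure on $\Lambda_\rho$ I would use a Schiffer-type description: in the discrete Fuchsian case, $\Lambda_\rho$ is a union of Hurwitz spaces parameterizing branched covers $X\to\mathbb{H}/\rho(\pi_1)$ of fixed topological type, each of which carries a natural complex-manifold structure and maps holomorphically into $\Teich_g$ with tangent spaces computable by varying branch divisors. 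In the indiscrete non-elementary case I would invoke the analog for $\mathrm{PSL}(2,\mathbb{R})$-representations promised in the outline of \S\ref{sec:schiffer}. Matching dimensions via the computations of \cite{Tosic2023} then yields the required equality of tangent spaces.

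With tangent spaces identified, the connected components of $\mathrm{NAH}^{-1}(\Lambda_\rho\times\{\rho\})$ over varying non-elementary $\rho$ partition $\mathcal{Z}(g,k)$ into complex submanifolds whose tangent bundles recover $\mathcal{K}|_{\mathcal{Z}(g,k)}$, giving the required foliation $\mathcal{F}$. For the map $F$ on a leaf $\Lambda\subset\Lambda_\rho$, the $\rho$-equivariant holomorphic map $f_X:\tilde{X}\to\mathbb{H}$ is unique up to the centralizer of $\rho$ in $\mathrm{PSL}(2,\mathbb{R})$, which is discrete because $\rho$ is non-elementary. The $f_X$ therefore patch into a single smooth $\rho$-equivariant map $F:\tilde{\Curve}_\Lambda\to\mathbb{H}$. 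Fiberwise holomorphicity is built in, while horizontal holomorphicity follows from the equivariant version of Corollary \ref{cor:hol-structure}: constancy of $\mathrm{E}_\rho$ on $\Lambda$ forces $\partial F\wedge\partial F = 0$, and combined with the fiberwise holomorphicity and the complex structure on $\mathbb{H}$ this upgrades $F$ to a holomorphic map.

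The hardest part will be establishing the complex-submanifold structure of $\Lambda_\rho$ together with the correct tangent-space identification, especially in the indiscrete case where the Hurwitz-space description is unavailable. The second-variation identities of \cite{Tosic2023} must be used in an essential way, since one cannot simply rely on plurisubharmonicity of $\mathrm{E}_\rho$: the minimum locus of a general plurisubharmonic function need not be a manifold. Once this obstacle is overcome, integrability of $\mathcal{K}|_{\mathcal{Z}(g,k)}$ and the patching of $F$ are essentially formal.
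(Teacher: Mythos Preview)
Your plan has a genuine circularity at its heart. In the indiscrete non-elementary case you say you would ``invoke the analog for $\mathrm{PSL}(2,\mathbb{R})$-representations promised in the outline of \S\ref{sec:schiffer}.'' But that outline is the outline of the proof of Proposition~\ref{prop:schiffer}, and in the paper Proposition~\ref{prop:schiffer} is \emph{deduced from} Lemma~\ref{lemma:integrable}, not the other way around. So you are invoking the consequence of the lemma as an input to its own proof. Even in the discrete Fuchsian case, your Hurwitz-space route is not as innocent as it sounds: you would need to glue the strata $\mathrm{Schiff}_{Y,\sigma}$ (for varying branching patterns $\sigma$) into a single smooth submanifold, and the paper explicitly remarks that it took the present route precisely to avoid ``describing the combinatorics of how the spaces $\mathrm{Schiff}_{Y,\sigma}$ fit together.'' Your own closing paragraph correctly identifies this as the hard part, but the proposal does not actually contain an argument for it.

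The paper sidesteps the question of whether $\Lambda_\rho$ is a submanifold entirely; that fact is an \emph{output} of the lemma, not an input. The three ingredients are: (i) a second-variation computation showing that $\mathcal{K}$ is tangent to $\mathcal{Z}(g,k)$ (if $\mu$ kills the Levi form of $\mathrm{E}_\rho$ at a holomorphic point, the Hopf differential along the corresponding complex disk is $O(|t|^2)$, so the disk stays in the zero section to first order); (ii) a Serre-duality count on the Higgs bundle $L\oplus L^{-1}$ showing $\dim\mathcal{K}_p=2g-2-k$ for every $p\in\mathcal{Z}(g,k)$; and (iii) the key structural fact from \cite[Theorem~1.1]{Tosic2023} that $\mathcal{K}=\mathrm{NAH}^*\ker d\mathcal{R}$ for a globally defined smooth map $\mathcal{R}$. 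A constant-rank distribution that is the kernel of a differential is automatically Frobenius integrable, so (i)--(iii) give the foliation $\mathcal{F}$ on $\mathcal{Z}(g,k)$ with no prior knowledge of $\Lambda_\rho$. Since $\tilde{\mathcal{K}}$ has no $T\chi_{g,k}$-component by definition, the leaves project to complex submanifolds $\Lambda\subset\Teich_g$ contained in a single slice $\{\rho\}$. Your final paragraph on the holomorphicity of $F$ is essentially the paper's argument in \S\ref{subsec:holomorphy} and is fine.
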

Note that Lemma \ref{lemma:integrable} clearly implies Proposition \ref{prop:schiffer}, apart from the claim about local finiteness. That follows from the following claim.
\begin{claim}
    For any $\rho:\pi_1(\Sigma_g)\to\mathrm{PSL}(2,\mathbb{R})$, the set $\Lambda_\rho$ is a real analytic subset of $\Teich_g$.
\end{claim}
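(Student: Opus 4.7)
The plan is to realize $\Lambda_\rho$ as the zero locus of a real-analytic section of the bundle $\mathcal{H}_g\to\Teich_g$ whose fibre over $X$ is $H^0(K_X^2)$. Assume first that $\rho$ is non-elementary, so that for each $X\in\Teich_g$ there is a unique $\rho$-equivariant harmonic map $h_{X,\rho}:\tilde{X}\to\mathbb{H}$. Define the section $\Phi_\rho:\Teich_g\to\mathcal{H}_g$ by letting $\Phi_\rho(X)\in H^0(K_X^2)$ be the Hopf differential of $h_{X,\rho}$.

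The first step is to verify that $\Phi_\rho$ is real-analytic. The harmonic map equation is a nonlinear elliptic PDE whose coefficients depend real-analytically on the complex structure of $X$ (and hence real-analytically on $X\in\Teich_g$). By the real-analytic implicit function theorem in suitable Sobolev spaces, applied in the setup of \cite{Eells1981} or \cite{Tosic2023}, the solution $h_{X,\rho}$ inherits real-analytic dependence on $X$, and thus so does its Hopf differential.

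The second step is to identify $\Lambda_\rho$ with $\Phi_\rho^{-1}(0)$. This relies on the classical fact that the Hopf differential of a harmonic map between Riemann surfaces vanishes identically if and only if the map is globally holomorphic or globally antiholomorphic. Since any $\rho$-equivariant holomorphic map to $\mathbb{H}$ is automatically harmonic (the target is K\"ahler), and since the equivariant harmonic map is unique in the non-elementary case, we conclude that $X\in\Lambda_\rho$ iff $h_{X,\rho}$ is holomorphic iff $\Phi_\rho(X)=0$; the antiholomorphic alternative is excluded when $\rho$ has positive Euler class, and an orientation argument gives $\Lambda_\rho=\emptyset$ for negative Euler class.

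The elementary case for $\rho$ is treated directly by inspection: if $\rho$ fixes a point in $\mathbb{H}$ then constants work and $\Lambda_\rho=\Teich_g$, and otherwise $\Lambda_\rho=\emptyset$, both trivially real-analytic. The main technical input is the real-analytic dependence of $h_{X,\rho}$ on $X$; since this is the standard consequence of elliptic regularity with real-analytic coefficients, no serious obstacle is anticipated beyond setting up the implicit function theorem correctly in the equivariant context.
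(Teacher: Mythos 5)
Your proof is correct, but it takes a different route from the paper's. You realize $\Lambda_\rho$ as the zero locus of the (vector-bundle-valued, real-analytic) Hopf differential section $X\mapsto\Phi_\rho(X)\in H^0(K_X^2)$, using the classical dichotomy that vanishing Hopf differential means $h_{X,\rho}$ is holomorphic or antiholomorphic, and ruling out the antiholomorphic branch by the sign of the Euler class. The paper instead works with the scalar energy functional: using the topological identity $\mathrm{E}_\rho(X)-2\pi\,\mathrm{eu}(\rho)=2\int_X|\bar\partial f|^2\,d\mathrm{area}$, it identifies $\Lambda_\rho$ as the level set $\mathrm{E}_\rho^{-1}(2\pi\,\mathrm{eu}(\rho))$ (equivalently, the zero set of the non-negative real-analytic function $\mathrm{E}_\rho-2\pi\,\mathrm{eu}(\rho)$), and real analyticity follows from that of $\mathrm{E}_\rho$. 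Both arguments ultimately lean on the same input — real-analytic dependence of the equivariant harmonic map on the complex structure, via elliptic theory — but the paper's version is more economical: it needs only real analyticity of a scalar function already appearing elsewhere in the paper, avoids setting up a section of $\mathcal{H}_g$, and sidesteps the orientation discussion, since the Euler class enters directly through the energy identity rather than through a case analysis of holomorphic versus antiholomorphic. Your extra discussion of the elementary case is not wrong, but it is not needed in the context where the claim is invoked (Proposition~\ref{prop:schiffer} assumes positive Euler class, which forces $\rho$ to be non-elementary).
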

\begin{proof}
    Fix an $X\in\Teich_g$, and let $f:\tilde{X}\to\mathbb{H}$ be the $\rho$-equivariant harmonic map. Denote by $\mathrm{eu}(\rho)$ the Euler class of a representation $\rho$. Observe that 
    \begin{align*}
        \mathrm{E}_\rho(X)-2\pi\mathrm{eu}(\rho)&=\int_X \left(\abs{\partial f}^2+\abs{\bar{\partial}f}^2\right)d\mathrm{area}_X-\int_X \left(\abs{\partial f}^2-\abs{\bar{\partial}f}^2\right)d\mathrm{area}_X \\
        &=2\int_X \abs{\bar{\partial}f}^2 d\mathrm{area}_X.
    \end{align*}
    Hence $\Lambda_\rho$ is precisely $\mathrm{E}_\rho^{-1}(2\pi\mathrm{eu}(\rho))$, and is thus real analytic as $\mathrm{E}_\rho$ is real analytic.
\end{proof}
The rest of this section is devoted to proving Lemma \ref{lemma:integrable}. We split the proof into three steps, that we show in the following three subsections. We first show that $\mathcal{K}$ is tangent to $\mathcal{Z}(g,k)$ in \S\ref{subsec:tangency}. Then we show that $\mathcal{K}$ is integrable over $\mathcal{Z}(g,k)$ in \S\ref{subsec:integrability}, and finally we show in \S\ref{subsec:holomorphy} that the the $\rho$-equivariant harmonic maps are holomorphic over the parts of $\tilde{\Curve}_g$ that project to leaves of $\mathcal{F}$.
\subsection{Tangency to the zero section}\label{subsec:tangency} The following is a simple consequence of the computations in \cite[\S 3.1]{Tosic2023}. 
\begin{lemma}
    Let $\rho:\pi_1(\Sigma_g)\to\mathrm{PSL}(2,\mathbb{R})$ be a non-elementary representation. Suppose that $S_0\in\Teich_g$ and $f_0:\tilde{S}_0\to \mathbb{H}$ is a holomorphic $\rho$-equivariant map from the universal cover $\tilde{S}_0$ of $S_0$, and let $((S_t, f_t):t\in\mathbb{D})$ be a complex disk of $\rho$-equivariant harmonic maps based at $S_0$ with direction $\mu\in T_{S_0}\Teich_g$ (see \cite[Definition 1.7]{Tosic2023}). Assume further that 
    \begin{align*}
        \mu\partial f=\bar{\partial}\xi\text{ and }\xi\wedge\partial f=0,
    \end{align*}  
    for some section $\xi$ of the bundle $f_0^*T\mathbb{H}\otimes\mathbb{C}$. Then if we denote by $\Phi_t$ the Hopf differential of $f_t$, we have $\Phi_t=O\left(\abs{t}^2\right)$ for small $t$.
\end{lemma}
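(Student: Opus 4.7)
The plan is to pull the family $(S_t, f_t)$ back to the underlying smooth surface of $S_0$ via the Beltrami solutions $f^{t\mu}: S_0 \to S_{t\mu}$, and compute the first-order Taylor expansion of $\Phi_t$ in the complex parameter $t$ directly. Writing $F_t = f_t\circ f^{t\mu}: S_0\to\mathbb{H}$, and letting $T(t,\bar t) = F_t^*g_{\mathbb{H}}$ be the pullback metric viewed as a smooth symmetric bilinear form on $TS_0\otimes\mathbb{C}$, the pulled-back Hopf differential $(f^{t\mu})^*\Phi_t$ is equal, up to a factor $(1-|t\mu|^2)^{-2} = 1 + O(|t|^2)$, to $T(e_t, e_t)\,\omega_t^2$, where $e_t = \partial_z - \bar t\bar\mu\partial_{\bar z}$ spans $T^{1,0}_{J_{t\mu}}$ and $\omega_t = dz + t\mu d\bar z$ is the dual $J_{t\mu}$-holomorphic one-form in a local $J_0$-holomorphic coordinate $z$.

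The constant term $T_{zz}(0) = \langle (f_0)_z, (f_0)_z\rangle$ vanishes because $(f_0)_z\in T^{1,0}\mathbb{H}$ and the K\"ahlerness of $\mathbb{H}$ gives $\langle T^{1,0}, T^{1,0}\rangle = 0$; this recovers $\Phi_0 = 0$. I need to show the first-order term vanishes in both $t$ and $\bar t$. The $\partial_t$-contribution at $t=0$ reduces to $2\langle (f_0)_z, \nabla_z\xi\rangle$, where $\xi = \partial_t F_t|_0$ is identified with the $\xi$ of the hypothesis. The wedge condition $\xi\wedge\partial f = 0$ forces $\xi$ to lie in the $T^{1,0}\mathbb{H}$-line spanned by $(f_0)_z$, and since the K\"ahler Levi-Civita connection preserves the $(1,0)/(0,1)$ decomposition and pairs $T^{1,0}$ with itself trivially, this pairing vanishes.

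The main obstacle is the $\partial_{\bar t}$-derivative, since here the naive K\"ahler argument fails: the mixed pairing $T_{z\bar z}(0) = \langle (f_0)_z, (f_0)_{\bar z}\rangle$ is the energy density of $f_0$ and is non-zero. Two contributions arise at first order in $\bar t$: the variation of $F_t$ gives $\partial_{\bar t}T_{zz}|_0 = 2\langle (f_0)_z, \nabla_z\bar\xi\rangle$, while the explicit $\bar t$-dependence of the frame $e_t$ contributes $-2\bar\mu T_{z\bar z}(0)$. Complex-conjugating the other condition $\nabla_{\bar z}\xi = \mu(f_0)_z$ yields $\nabla_z\bar\xi = \bar\mu\overline{(f_0)_z} = \bar\mu(f_0)_{\bar z}$, so the first contribution equals $2\bar\mu T_{z\bar z}(0)$ and cancels exactly the second. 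This cancellation, which crucially uses the equation $\mu\partial f = \bar\partial\xi$ and not only the wedge condition, is the heart of the argument. Combined with the vanishing of the $t$-coefficient and $\omega_t^2 = dz^2 + O(|t|)$, one obtains $(f^{t\mu})^*\Phi_t = O(|t|^2)$, whence $\Phi_t = O(|t|^2)$.
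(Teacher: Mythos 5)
Your computation is essentially the paper's proof: the pulled-back Hopf differential is Taylor-expanded to first order in $(t,\bar t)$, with $\Phi_0=0$ and the K\"ahler parallelism of $T^{1,0}\mathbb{H}$ handling the $t$-coefficient, and the equation $\bar\partial\xi=\mu\partial f$ producing the cancellation in the $\bar t$-coefficient. The paper packages the same cancellation inside the $\partial_x$-derivative --- note how $2\partial\mathrm{Re}(\xi)+\mu\partial f_0-\bar\mu\bar\partial f_0$ collapses to $d\xi$ there, which is exactly your cancellation in $(x,y)$ rather than $(t,\bar t)$ coordinates --- so the two derivations are the same argument, not genuinely different routes.

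The one real gap is the unproved assertion that ``$\xi=\partial_t F_t|_0$ is identified with the $\xi$ of the hypothesis.'' The hypothesis only posits the \emph{existence} of some section $\xi$ with $\bar\partial\xi=\mu\partial f$ and $\xi\wedge\partial f=0$; it does not say that this $\xi$ equals the variational field $\partial_t f_t|_0$ of the harmonic family. Bridging these is precisely where the non-elementary hypothesis on $\rho$ enters, and you never invoke it --- a red flag, since it sits in the lemma statement for this very purpose. The paper's argument: by \cite[Theorem 1.9]{Tosic2023}, the difference $\eta:=\partial_t f_t|_0-\xi$ is a parallel section satisfying $\eta\wedge df_0=0$; since $\rho$ is non-elementary, $df_0$ has rank two at some point, so $\eta$ vanishes there, and being parallel it vanishes identically. (One could in fact avoid concluding $\eta=0$: parallelism alone gives $\nabla_z\eta=0$, so $\nabla_z(\partial_t F_t|_0)=\nabla_z\xi$ and both first-order coefficients in your expansion are unchanged --- but even this weaker route requires citing the parallelism of $\eta$, which your write-up neither states nor justifies.)
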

\begin{proof}
    From \cite[Theorem 1.9]{Tosic2023}, we see that $\eta=\left.\frac{\partial f_t}{\partial t}\right|_{t=0}-\xi$ is a parallel section such that $\eta\wedge df_0=0$. However, since $\rho$ is non-elementary, $df_0$ has rank $2$ at some point. Therefore $\eta$ vanishes at this point, and since it is parallel, $\eta$ is identically zero. Thus $\xi=\left.\frac{\partial f_t}{\partial t} \right|_{t=0}$.
    \par Now observe that $\Phi_t=\langle \partial f_t\otimes\partial f_t\rangle$, where $\langle\cdot,\cdot\rangle$ is the complex bilinear extension of the pullback metric on $f_t^*T\mathbb{H}\otimes\mathbb{C}$. Therefore we have, setting $t=x+iy$, in the setting of \cite[\S 3.1]{Tosic2023}, 
    \begin{align*}
        \frac{\partial\Phi_t}{\partial x}&=\frac{\partial}{\partial x}\left\langle \left(df_t\circ\frac{\mathrm{id}-iJ_t}{2}\right)\otimes\left(df_t\circ\frac{\mathrm{id}-iJ_t}{2}\right) \right\rangle \\
        &=2\left\langle (2{\partial}\mathrm{Re}(\xi)+\mu\partial f_0-\bar{\mu}\bar{\partial}f_0)\otimes\partial f_0 \right\rangle\\
        &=2\left\langle (\partial\xi+\partial\bar{\xi}+\bar{\partial}\xi-\partial\bar{\xi})\otimes\partial f_0\right\rangle=2\langle d\xi\otimes\partial f_0\rangle\\
        &=2\mu\Phi_0+2\langle \partial\xi\otimes\partial f_0\rangle.
    \end{align*}
    Note that since $f_0$ is holomorphic, we have $\Phi_0=0$. Moreover, since $\xi\wedge\partial f_0=0$ and $f_0$ is holomorphic, it follows that $\xi$ takes values in $f_0^*T^{1,0}\mathbb{H}$. Since $f_0$ is holomorphic, it follows that $f_0^*T\mathbb{H}\otimes\mathbb{C}=f_0^*T^{1,0}\mathbb{H}\oplus f_0^*T^{0,1}\mathbb{H}$, and in particular the second fundamental form of $f_0^*T^{1,0}\mathbb{H}\leq f_0^*T\mathbb{H}\otimes\mathbb{C}$ vanishes. Therefore $\partial\xi$ also takes values in $f_0^*T^{1,0}\mathbb{H}$, and hence $\langle\partial\xi\otimes\partial f_0\rangle=0$. Thus $\frac{\partial \Phi_t}{\partial x}=0$, and an identical argument shows that $\frac{\partial\Phi_t}{\partial y}=0$, concluding the proof. 
\end{proof}
\subsection{Integrability}\label{subsec:integrability} By \cite[Theorem 1.1]{Tosic2023}, the distribution $\mathcal{K}$ is precisely $\mathrm{NAH}^*\ker d\mathcal{R}$, where $\mathcal{R}:\Teich_g\times\chi_{g,k}\to \chi_{g,k}$ is a certain map defined in \cite[pp. 2]{Tosic2023}. In particular, if we show that $\mathcal{K}$ has constant rank on $\mathcal{Z}(g,k)$, it follows immediately from Frobenius integrability theorem that $\mathcal{K}$ is integrable.
\par We only show this in the case of even Euler class. Odd Euler class can be handled as in \cite[\S 4]{Biswas2021} or \S\ref{subsubsec:odd}. Let $(E,\Phi)$ be the stable degree $0$ Higgs bundle over $X\in\Teich_g$ that corresponds to some point $p\in\mathcal{Z}$. From \cite[\S 10]{Hitchin1987}, it follows that $E=L\oplus L^{-1}$ for some holomorphic line bundle $L$ over $X$, such that 
\begin{align*}
    \Phi=\begin{pmatrix}
        0 & \phi\\
        0 & 0
    \end{pmatrix},
\end{align*}
where $\phi\in H^0(L^2K)$. Here $L$ is a negative bundle with degree that depends only on the Euler class. By \cite[Theorem 1.6]{Tosic2023}, $\mathcal{K}_{p}$ consists of directions $\mu$ such that 
\begin{align*}
    \mu\phi=\bar{\partial}\xi,
\end{align*}
for some $\xi$ that is a section of $L^2$. The existence of such a $\xi$ is equivalent to $[\mu\phi]$ vanishing in $H^1(L^2)$. By Serre duality, this is equivalent to $\int_S \mu\phi\theta=0$ for all $\theta\in H^0(KL^{-2})$. Thus 
\begin{align*}
    \dim \mathcal{K}_p&=3g-3-\dim \phi\otimes H^0(KL^{-2})=3g-3-h^0(KL^{-2})\\
    &=3g-3-(h^0(L^2)-\deg L^2+g-1)=2g-2+\deg L^2\\ &=2g-2-k,
\end{align*}
which is independent of the point in $\mathcal{Z}(g,k)$.
\subsection{Holomorphicity of the combined map}\label{subsec:holomorphy} Let $F:\tilde{\Curve}_\Lambda\to\mathbb{H}$ be the map which is harmonic and $\rho$-equivariant on the fibres of $\tilde{\Curve}_g\to\Teich_g$. By construction, $F$ is holomorphic on the fibres of $\tilde{\Curve}_g\to\Teich_g$. By Proposition \ref{prop:energy-constant-iff-ph}, we see that $F$ is pluriharmonic. 
\par We now work in local coordinates $(z,t_1,t_2,...,t_n)$ on $\tilde{\Curve}_\Lambda$ and $(t_1,t_2,...,t_n)$ on $\Lambda$, such that $\tilde{\Curve}_g\to\Teich_g$ is locally modelled on $(z,t_1,t_2,...,t_n)\to (t_1,t_2,...,t_n)$. It follows from Proposition \ref{prop:energy-constant-iff-ph} that 
\begin{align}\label{eq:collinear}
    F_z\wedge F_{t_i}=0.
\end{align}
However, since $\langle F_z\otimes F_z\rangle=0$ by conformality, it follows that $\langle F_{t_i}\otimes F_{t_j}\rangle=0$ and $\langle F_{t_i}\otimes F_z\rangle=0$ for any $1\leq i,j\leq n$, since by (\ref{eq:collinear}), $F_z$ and $F_{t_i}$ are colinear. Therefore $F$ is holomorphic when restricted to any complex disk in $\tilde{\Curve}_\Lambda$, and hence $F$ is holomorphic.
\section{Proof of Theorem \ref{thm:inframain}}\label{sec:pf-thm-main}
In this section, we prove Theorem \ref{thm:inframain}. Therefore let $M, \psi, \mathcal{S}$ be as in Theorem \ref{thm:inframain}, and $\Delta,\mathcal{F}$ as in Corollary \ref{cor:hol-structure}.
\par Recall from the outline that, to show Theorem \ref{thm:inframain}, we define an equivalence relation $\sim$ on $X_t$, such that the quotient map $q_t:X_t\to Y_t:=X_t/\sim$ is a non-constant holomorphic map between Riemann surfaces. We then show that the Riemann surfaces $Y_t$ can be taken to be isomorphic over $\mathcal{S}$.
\par The organization of this section is as follows. We define $\sim$ in \S\ref{subsec:holomorphic-equivalence-relation}. In \S\ref{subsec:semicontinuity}, we show the semicontinuity of the family $\{Y_t:t\in\mathcal{S}\}$. More precisely, we show in Lemma \ref{lm:semicontinuity} that for any $t\in\mathcal{S}$, there exists a neighbourhood $t\in\mathcal{U}\subset\mathcal{S}$ such that, for any $s\in\mathcal{U}$, there exists a non-constant holomorphic map $Y_t\to Y_s$. Using this observation, we finish the proof of Theorem \ref{thm:inframain} in \S\ref{subsec:finishing-thm-inframain}.
\subsection{Analytic equivalence relation}\label{subsec:holomorphic-equivalence-relation} 
We define the following equivalence relation $\sim$ on $X_t$, following Sagman \cite[Definition 3.1]{Sagman2021}. Define $X_t^\circ=X_t\setminus\mathbb{V}(\partial^\mathrm{vert.}F_\psi)$.
\begin{definition}\label{dfn:sim}
    We define an equivalence relation $\widehat{\sim}$ on $X_t^\circ$ as follows. For $z, w\in X_t^\circ$, we have $z\widehat{\sim} w$ if and only if there exist open neighbourhoods $U_z$ of $z$ and $U_w$ of $w$ in $X_t^\circ$, and a biholomorphism $\phi:U_z\to U_w$ such that $F_\psi\circ \phi=F_\psi$. We let $\sim$ be the relation obtained by taking the closure of $\widehat{\sim}\subset X_t\times X_t$.
\end{definition}
Note that by \cite[Defintion 3.1, Propositions 3.15 and 3.16]{Sagman2021}, ${\sim}$ is an equivalence relation, and there exists a Riemann surface strucutre on $X_t/{\sim}$ such that $X_t\to X_t/{\sim}$ is holomorphic. We define $Y_t:=X_t/{\sim}$, and let $q_t:X_t\to Y_t$ be the (holomorphic) quotient map. Moreover, $\sim\cap (X_t^\circ\times X_t^\circ)=\widehat{\sim}$.
\begin{remark}\label{remark:nathaniel-eq-sim}
    Note that \cite[Definition 3.1]{Sagman2021} differs slightly from Definition \ref{dfn:sim}. Let $\sim^\prime$ be the relation from \cite[Definition 3.1]{Sagman2021}. Then $\sim^\prime\cap (X_t^\circ\times X_t^\circ)=\widehat{\sim}$ and it is easy to see that $\sim^\prime\subseteq\sim$. Since $X_t/\sim^\prime$ is Hausdorff, it follows that $\sim^\prime$ is closed. Thus $\sim^\prime=\sim$ and $\sim\cap (X_t^\circ\times X_t^\circ)=\widehat{\sim}$.
\end{remark}
Note that by Definition \ref{dfn:sim}, there exists a map $h_t:Y_t \to M$ such $F_\psi|_{X_t}=h_t\circ q_t$. Since $q_t$, $F_\psi|_{X_t}$ are holomorphic and harmonic, respectively, it follows that $h_t$ is harmonic.
\subsection{Semicontinuity of $Y_t$}\label{subsec:semicontinuity} Our main result here is the following.
\begin{lemma}\label{lm:semicontinuity}
    For any $t\in\mathcal{S}^\circ$, there exists a neighbourhood $\mathcal{U}\ni t$ in $\mathcal{S}^\circ$, such that for any $s\in \mathcal{U}$, there is a non-constant holomorphic map $\tau_{t,s}:Y_t\to Y_s$ with $h_t=h_s\circ\tau_{t,s}$.
\end{lemma}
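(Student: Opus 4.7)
The plan is to construct $\tau_{t,s}$ by transporting points of $X_t$ along leaves of $\mathcal{F}$ (from Corollary \ref{cor:hol-structure}) to nearby fibers $X_s$ and then projecting via $q_s$. The key point is that $F_\psi$ is constant along leaves of $\mathcal{F}$, which makes leaf-transport compatible with the $\sim$-equivalence relations defining $q_t$ and $q_s$.

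First I would set up the holonomy map. For $x \in X_t^\circ$ one has $\partial F_\psi|_{T_x X_t} \neq 0$, so $T_x X_t$ is complementary to $T_x\mathcal{F} = \ker\partial F_\psi(x)$, and each leaf of $\mathcal{F}$ through $X_t^\circ$ is locally a graph over $\mathcal{S}$. A standard flowbox/compactness argument then yields, for any compact $K \subset X_t^\circ$, a neighborhood $\mathcal{U}_K \ni t$ in $\mathcal{S}^\circ$ and a holomorphic map $\sigma: K \times \mathcal{U}_K \to \Curve_{\mathcal{S}^\circ}$ sending $(x,s)$ to the unique nearby point of the leaf through $x$ lying on $X_s$. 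Writing $\sigma_s(x) = \sigma(x,s)$, the map $\sigma_s: K \to X_s$ is locally biholomorphic, and $F_\psi \circ \sigma_s = F_\psi|_K$ since $F_\psi$ is constant along leaves of $\mathcal{F}$.

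The crux of the argument is showing that $x \sim y$ on $X_t$ implies $\sigma_s(x) \sim \sigma_s(y)$ on $X_s$. Given a local biholomorphism $\phi: U_x \to U_y$ witnessing $x \sim y$, I would set $\phi_s := \sigma_s \circ \phi \circ \sigma_s^{-1}: \sigma_s(U_x) \to \sigma_s(U_y)$, shrinking $U_x, U_y$ so that $\sigma_s$ is a biholomorphism onto its image on each. A short calculation using $F_\psi \circ \sigma_s = F_\psi$ and $F_\psi \circ \phi = F_\psi$ gives $F_\psi \circ \phi_s = F_\psi$ and $\phi_s(\sigma_s(x)) = \sigma_s(y)$, so $\phi_s$ witnesses $\sigma_s(x) \sim \sigma_s(y)$ on $X_s$. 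Consequently $q_s \circ \sigma_s$ descends to a holomorphic map on $q_t(K) \subset Y_t$, with image compatible with $h_s$.

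Finally I would globalize and extend. By compactness of $Y_t$, cover $Y_t^\circ := q_t(X_t^\circ)$ by finitely many images $q_t(K_j)$ of compacta $K_j \subset X_t^\circ$, and intersect the corresponding $\mathcal{U}_{K_j}$ to obtain a single neighborhood $\mathcal{U} \ni t$ and a holomorphic map $\bar\tau_{t,s}: Y_t^\circ \to Y_s$ for $s \in \mathcal{U}$, satisfying $h_s \circ \bar\tau_{t,s} = h_t|_{Y_t^\circ}$ by construction. Since $Y_t \setminus Y_t^\circ$ is finite and $Y_s$ is a closed Riemann surface, Riemann's removable singularity theorem (applied to maps into a compact Riemann surface) extends $\bar\tau_{t,s}$ to a holomorphic $\tau_{t,s}: Y_t \to Y_s$; the identity $h_s \circ \tau_{t,s} = h_t$ then holds on all of $Y_t$ by continuity, and non-constancy of $\tau_{t,s}$ follows from non-constancy of $h_t$. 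The principal obstacle is the compatibility step: showing that leaf-holonomy intertwines $\sim$ on $X_t$ and $X_s$, which relies critically on the characterization of $\sim$ by local biholomorphisms together with the fact that $F_\psi$ is constant on leaves of $\mathcal{F}$.
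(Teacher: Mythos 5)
Your proposal captures the correct geometric idea and is essentially the same approach as the paper: transport along leaves of the foliation $\mathcal{F}$ (on which $F_\psi$ is constant), combined with the observation that this transport is compatible with the equivalence relations $\sim$. Your compatibility step --- conjugating a witnessing biholomorphism $\phi$ by $\sigma_s$ to show $x\sim y$ on $X_t$ implies $\sigma_s(x)\sim\sigma_s(y)$ on $X_s$ --- is correct and is indeed the crux, playing the role of Claims 5.8--5.9 in the paper.

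However, there is a genuine gap in the globalization step. The holonomy $\sigma_s$ is only defined where the leaf of $\mathcal{F}$ is transverse to the fibre $X_t$, which requires $\partial^{\mathrm{vert.}}F_\psi\neq 0$. At the (finitely many, but possibly present) points $x_0\in X_t$ where $\partial^{\mathrm{vert.}}F_\psi(x_0)=0$ but $\partial F_\psi(x_0)\neq 0$, the relation $T_{x_0}\mathcal{F}=\ker\partial F_\psi(x_0)\supset T_{x_0}X_t$ shows the leaf through $x_0$ is \emph{tangent} to $X_t$, and $\pi$ restricted to that leaf is critical at $x_0$. Consequently the neighbourhood $\mathcal{U}_K$ for which $\sigma_s$ is defined on $K$ necessarily shrinks as $K$ exhausts $X_t^\circ$, and there is no uniform $\mathcal{U}$. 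Your appeal to ``compactness of $Y_t$'' does not rescue this: $Y_t^\circ$ is $Y_t$ minus a finite nonempty set, hence not compact, and cannot be covered by finitely many images $q_t(K_j)$ of compacta $K_j\subset X_t^\circ$. The paper circumvents exactly this by not using holonomy but rather local leaf-space projections $\phi_i:U_i\to\Lambda_i$, which are holomorphic on open sets covering \emph{all} of $X_t$ (including the non-transverse points, where $\phi_i|_{X_t}$ is simply ramified rather than undefined). It then builds $\xi_i:\Lambda_i\to Y_s$ with $q_s=\xi_i\circ\phi_i$ (Claim 5.8), defines $f=\xi_i\circ\phi_i$ on $X_t$ minus the finite branch locus $D_t$, checks the overlaps and $\sim$-compatibility (Claim 5.9), and extends over the remaining finitely many points by removable singularity. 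To complete your argument along the same lines, you would need to replace $\sigma_s$ by $\phi_i$ and work on the local leaf space rather than on a nearby fibre; the rest of your reasoning then goes through.
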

\begin{proof}
    For any $z\in\Curve_{\mathcal{S}^\circ}$, let $U_z$ be a neighbourhood of $z$ such that there exists a holomorphic map $\phi_z:U_z\to\Lambda_z$, where $\Lambda_z$ is a Riemann surface homeomorphic to a disk, and the fibres of $\phi_z$ are connected open subsets of the leaves of $\mathcal{F}$. Note that $F_\psi$ is constant on the leaves of $\mathcal{F}$, so we can write $F_\psi|_{U_z}=\tilde{h}_z\circ\phi_z$, where $\tilde{h}_z:\Lambda_z\to M$.
    \par Consider the maps $\theta_z:=(\pi, \phi_z):U_z\to \mathcal{S}\times \Lambda_z$. Note that any fibre of $\theta_z$ is an analytic subset of $X_s\cap U_z$ for some $s\in\mathcal{S}^\circ$, and is hence discrete (in $X_s\cap U_z$). By \cite[Open mapping theorem, pp. 107]{Grauert1984}, the map $\theta_z$ is open. We can thus shrink $U_z\ni z, \Lambda_z$ if necessary, such that the image of $\theta_z$ takes the form $\mathcal{U}_z\times\Lambda_z$, for $\mathcal{U}_z\subset\mathcal{S}^\circ$. This in particular implies that $\phi_z(U_z\cap X_s)=\Lambda_z$ for all $s\in\pi(U_z)$.
    \par Now fix an arbitrary $t\in\mathcal{S}^\circ$, as in the statement of the lemma. Let $z_1,z_2,...,z_N\in X_t$ be such that $X_t\subset\bigcup_{i=1}^N U_{z_i}$, and set $U_i:=U_{z_i}, \phi_i:=\phi_{z_i}, \Lambda_i:=\Lambda_{z_i}, \tilde{h}_i:=\tilde{h}_{z_i}$. Let $\mathcal{U}:=\bigcap_{i=1}^N \pi(U_i)$, and observe that by construction $\mathcal{U}\subseteq\mathcal{S}^\circ$ is an open neighbourhood of $t$. For $s\in\mathcal{U}$, let $D_s\subset X_s$ be the finite set of points where $\phi_i|_{X_s}$ is not an immersion, for some $i=1,2,...,N$.
    \begin{claim}\label{claim:fancy-local-form}
        For any $s\in\mathcal{U}$ and $i\in\{1,2,...,N\}$, there exists a holomorphic map $\xi_{i,s}:\Lambda_i\to Y_s$ such that $q_s=\xi_{i, s}\circ\phi_i$  on $X_s\cap U_i$ and $h_s\circ\xi_{i,s}=\tilde{h}_i$ on $\Lambda_i$.
    \end{claim}
    \begin{proof}
         Suppose $a, b\in U_i\cap X_s\setminus D_s$ are such that $\phi_i(a)=\phi_i(b)$. Then since $\phi_i$ is a local biholomorphism near $a, b$, there exist neigbhourhoods $A\ni a, B\ni b$ in $X_s\cap U_i$, and a biholomorphism $\eta:A\to B$ such that $\phi\circ\eta=\phi|_A$. Recall that $F_\psi|_{U_i}=h_i\circ\phi_i$. Then $F_\psi\circ\eta=F_\psi|_A$, and hence $a\sim b$. Therefore $q_s(a)=q_s(b)$.
         \par It follows that there exists a holomorphic map $\xi_{i,s}:\phi_i(U_i\cap X_s\setminus D_s)\to Y_s$ such that $q_s|_{U_i\cap X_s\setminus D_s}=\xi_{i,s}\circ\phi_i|_{U_i\cap X_s\setminus D_s}$. Since $\phi_i(D_s\cap U_i)$ is finite and $\phi_i(U_i\cap X_s)=\Lambda_i$, the map $\xi_{i,s}$ extends to $\Lambda_i$ and the factorization $q_s=\xi_{i,s}\circ\phi_i$ extends over $D_s$ by continuity.
         \par Observe that $h_s\circ\xi_{i,s}\circ\phi_i=h_s\circ q_s=F_\psi=\tilde{h}_i\circ\phi_i$. Since $\phi_i(U_i\cap X_s)=\Lambda_i$, the second claim follows.
    \end{proof}
    From Claim \ref{claim:fancy-local-form}, it follows that we have the following commutative diagram for any $s\in\mathcal{U}$,
    \[
            \begin{tikzcd}
                X_t\cap U_i\arrow[d, "q_s"] \arrow[r, "\phi_i"] & \Lambda_i \arrow[d, "\tilde{h}_i"]\arrow[dr, "\xi_{i,s}"] & \arrow[l, "\phi_i"] X_s\cap U_i\arrow[d, "q_s"] \\ 
                Y_t \arrow[r, "h_t"] & M & \arrow[l, "h_s"] Y_s
            \end{tikzcd}    
            \]
    Now fix an arbitrary $s\in\mathcal{U}$. Denote $X_t^-=X_t^\circ\setminus D_t$ and define $f:X_t^-\to Y_s$ by $f|_{U_i\cap X_t^-}=\xi_{i,s}\circ\phi_i|_{U_i\cap X_t^-}$. Note that $X_t^-$ is $X_t$ with a finite number of points removed. 
    \begin{claim}
        The map $f$ is well-defined and constant on the fibres of $q_t$.
    \end{claim}
    \begin{proof}
        We prove both claims simultaneously. Suppose $a\in X_t^-\cap U_i,b\in X_t^-\cap U_j$ have neighbourhoods $A\ni a, B\ni b$ in $X_t^-$, and a biholomorphism $\eta:A\to B$ such that $F_\psi\circ\eta=F_\psi|_A$. We will show that $\xi_{i,s}(\phi_i(a))=\xi_{j,s}(\phi_j(b))$. Note that   
        \begin{enumerate}
            \item when $a=b$ and $\eta=\mathrm{id}$, this shows that $f$ is well-defined, and 
            \item when $a\sim b$, this shows that $f$ is constant on the fibres of $q_t$.
        \end{enumerate}
        \par Since $a, b\not\in D_t$, the corresponding maps $\phi_i:A\to\Lambda_i$ and $\phi_j:B\to\Lambda_j$ are local biholomorphisms, where we shrink $A,B$ without mention as necessary. Thus there exists a holomorphic map $\bar{\eta}:\phi_i(A)\to\phi_j(B)$ such that $\bar{\eta}\circ\phi_i=\phi_j\circ\eta$. In particular, $\bar{\eta}(\phi_i(a))=\phi_j(b)$.
        \par We analyze the diagram 
        \[
        \begin{tikzcd}
            A\arrow[r, "\phi_i"]\arrow[dd, "\eta"] & \phi_i(A)\arrow[dd, "\bar{\eta}"] \subseteq\Lambda_i\arrow[dr, "\xi_{i,s}"] & & \\
                                                   &                                                                        & Y_s\arrow[r, "h_s"] & M \\ 
            B\arrow[r, "\phi_j"] & \phi_j(B) \subseteq\Lambda_j\arrow[ur, "\xi_{j,s}"]
        \end{tikzcd}
        \]
        Observe that 
        \begin{align*}
            \tilde{h}_j\circ\bar{\eta}\circ\phi_i=F_\psi\circ\eta=F_\psi=\tilde{h}_i\circ\phi_i
        \end{align*} 
        on $A$. Thus, since $\phi_i$ is a local biholomorphism, we have $\tilde{h}_j\circ\bar{\eta}=\tilde{h}_i|_{\phi_i(A)}$. Therefore $h_s\circ\xi_{j,s}\circ\bar{\eta}=h_s\circ\xi_{i,s}$ on $\phi_i(A)\subseteq\Lambda_i$. 
        \par We now let $u_n\in \phi_i(A), v_n\in \phi_j(B)$ be sequences of points such that 
        \begin{enumerate}
            \item $u_n\to \phi_i(a), v_n\to \phi_j(b)$ as $n\to\infty$,
            \item neither of $\xi_{i,s}(u_n), \xi_{j,s}(v_n)$ is the $q_s$-image of a ramification point of $q_s:X_s\to Y_s$, and 
            \item $\xi_{i,s}$ (resp. $\xi_{j,s}$) is an immersion near $u_n$ (resp. $v_n$).
        \end{enumerate}
        Observe that there exist neigbhourhoods $U_n\ni \xi_{i,s}(u_n), V_n\ni \xi_{j,s}(v_n)$ and a biholomorphism $\eta_n:U_n\to V_n$ given by $\eta_n\circ\xi_{i,s}=\xi_{j,s}\circ\bar{\eta}$, such that $h_s\circ\eta_n=h_s$. By construction of $Y_s$, it follows that $\eta_n$ is the identity and $U_n=V_n$. In particular, $\xi_{i,s}(u_n)=\xi_{j,s}(v_n)$, and by taking limits we see that $\xi_{i,s}(\phi_i(a))=\xi_{j,s}(\phi_j(b))$, as desired.
    \end{proof}
    We now finish the proof of Lemma \ref{lm:semicontinuity}. Note that $f$ extends to a holomorphic map $f:X_t\to Y_s$ that is also constant on the fibres of $q_t$. Therefore $f=\tau_{t,s}\circ q_t$, where $\tau_{t,s}:Y_t\to Y_s$ is the desired holomorphic map. Note that $h_s\circ\tau_{t,s}\circ q_t=h_s\circ\xi_i\circ\phi_i=\tilde{h}_i\circ\phi_i=F_\psi=h_t\circ q_t$, so since $q_t:X_t\to Y_t$ is surjective, we see that $h_s\circ\tau_{t,s}=h_t$ as claimed.
\end{proof}
\subsection{Finishing the proof}\label{subsec:finishing-thm-inframain}
Denote by $g(t)$ the genus of $Y_t$ for $t\in\mathcal{S}^\circ$.  Since $\tau_{t,s}$ is a non-constant holomorphic map $Y_t\to Y_s$ for any $s$ in some neighbourhood $\mathcal{U}$ of $t$, it follows that $g(t)\geq g(s)$. Therefore 
\begin{align*}
g(t)\geq\limsup_{s\to t} g(s).
\end{align*}
Hence $g:\mathcal{S}^\circ\to\mathbb{Z}_{\geq 0}$ is an upper semicontinuous map. Let $g_0$ be the minimum of $g$. Thus there is a connected open subset $\mathcal{U}\subset\mathcal{S}^\circ$ such that $g(t)=g_0$ for $t\in\mathcal{U}$. Now fix $t\in\mathcal{U}$. After shrinking $\mathcal{U}$, we can assume that for all $s\in\mathcal{U}$, the map $\tau_{t,s}:Y_t\to Y_s$ is a holomorphic map between closed Riemann surfaces of the same genus, so by Riemann--Hurwitz it is an isomorphism.
\par Note now that for any $s\in\mathcal{U}$, the Riemann surface $X_s$ admits a non-constant holomorphic map $\tau_{t,s}^{-1}\circ q_s:X_s\to Y:=Y_t$. Hence $\mathcal{U}$ is covered by $\bigcup_{[p]\in[\Sigma_g, Y]}\Lambda_{Y,p}$. Here $[\Sigma_g, Y]$ denotes the set of free homotopy classes of maps $\Sigma_g\to Y$. Thus, by the Baire category theorem \cite[Theorem 34, pp. 200]{Kelley2005}, for some homotopy class $p:\Sigma_g\to Y$, the set $\mathcal{U}\subset\Lambda_{Y,p}$, after possibly shrinking $\mathcal{U}$ to a smaller open set. Since $\mathcal{S}$ is connected and $\Lambda_{Y,p}$ is closed, it follows that $\mathcal{S}\subseteq\Lambda_{Y,p}$.
\par Finally, fix an arbitrary $t\in\mathcal{U}$, and observe that since $q_t\simeq p$, it follows that $\psi\simeq h_t\circ p$.
\section{Factorization results}\label{sec:factorization-results}
This section is devoted to the proof of Theorem \ref{thm:ph-smooth-projective}, and is organized as follows. In \S\ref{subsec:siu-sampson}, we pass from $X$ to its resolution of sinuglarities $X'$ using Hironaka's results \cite{Hironaka1964,Hironaka1964a}, and apply the Siu--Sampson theorem \cite{Siu1980,Sampson1986} to the lift of $f$ to $X'$. In \S\ref{subsec:bertini} we construct a diagram of holomorphic maps
\[
\begin{tikzcd}
    E \arrow[r,"p"]\arrow[d, "\phi"] &  B\\ 
    X'
\end{tikzcd}    
\]
where the fibres of $p$ are closed connected Riemann surfaces, and  $\phi$ is a proper dominant holomorphic map. Then by the conclusion of the Siu--Sampson theorem, and Proposition \ref{prop:energy-constant-iff-ph} show that the energy of the lift of $f$ over $p^{-1}(b)$ is constant as a function of $b\in B$. In \S\ref{subsec:applying-thm-supermain}, we apply Theorem \ref{thm:supermain} to finish the proof of Theorem \ref{thm:ph-smooth-projective}. In \S\ref{subsec:pf-claim-cd-props} and \S\ref{subsec:claim-hatphi-rank-one} we show two technical claims made in \S\ref{subsec:applying-thm-supermain}.
\subsection{Siu--Sampson argument}\label{subsec:siu-sampson}
Let $f:X\to M$ be as in Theorem \ref{thm:ph-smooth-projective}. Let $\rho:X'\to X$ be a bimeromorphic map, where $X'$ is a smooth projective variety. Such a map exists by Hironaka's results on resolution of singularities \cite{Hironaka1964,Hironaka1964a} (the reader can also consult \cite[Corollary 3.22]{Kollar2011}). Since $\phi$ is projective, it is also a K\"ahler morphism \cite{Bingener1983}, and hence $X'$ admits the structure of a compact K\"ahler manifold as in \cite[Proposition 1.3.1 (vi)]{Varouchas1989}. Moreover, for this K\"ahler structure, the map $f\circ\rho$ is harmonic on $X'\setminus\rho^{-1}(X^\mathrm{sing.})$. Here $X^\mathrm{sing.}$ is the singular locus of $X$.
\par Note that the real codimension of $\rho^{-1}(X^\mathrm{sing.})$ in $X'$ is at least two, so the relative 2-capacity vanishes: \[\mathrm{cap}_2\left(\rho^{-1}(X^\mathrm{sing.})\right)=0.\]
By a result of Meier \cite[Theorem 1]{Meier1986}, it follows that $f\circ\rho$ extends to a harmonic map $f':X'\to M$. Applying Sampson's theorem \cite{Sampson1978} (see also \cite{Loustau2020}), we see that $f'$ is pluriharmonic and 
\begin{align*}
    \mathrm{rank}_\mathbb{C}(Df'(T^{1,0}X'))\leq 1.
\end{align*}
\subsection{Constructing a cover by curves}\label{subsec:bertini}
Suppose that $X'\subseteq\mathbb{P}^N$ is a complex submanifold of $N$-dimensional projective space $\mathbb{P}^N$. Set $d=\dim_\mathbb{C}X'$.
\par We denote by $\mathrm{Gr}(k, n)$ the Grassmanian variety parameterizing $k$-dimensonal subspaces of $\mathbb{C}^n$. Recall that the tautological bundle \[\mathcal{O}_{\mathbb{P}^N}(-1)\to\mathbb{P}^N\] is a subbundle of the trivial $\mathbb{C}^{N+1}$-bundle over $\mathbb{P}^N$, such that the fibre of $\mathcal{O}_{\mathbb{P}^N}(-1)$ over a line $L\in\mathbb{P}^N$ is precisely $L\leq\mathbb{C}^{N+1}$. We denote by $\mathcal{O}_{X'}(-1)$ the restriction of $\mathcal{O}_{\mathbb{P}^N}(-1)$ to $X'$.
\par Define
\begin{align*}
    E=\{(x, V)\in X'\times\mathrm{Gr}(N-d+2, N+1): x\in V\}.
\end{align*} 
Set $B=\mathrm{Gr}(N-d+2, N+1)$, and let $\phi:E\to X'$ be the projection to the first factor, and $p:E\to B$ be the projection to the second factor.
\begin{claim}\label{claim:gcc}
    The diagram 
    \[\begin{tikzcd}
        E\arrow[r, "p"]\arrow[d, "\phi"] &  B\\
        X'
    \end{tikzcd}\]
    has the following properties 
    \begin{enumerate}
        \item $E$ is a smooth projective variety and $\phi$ is a proper, dominant morphism,  
        \item\label{item:nice-fibres} a general fibre of $p$ is a smooth, connected, closed Riemann surface, and 
        \item\label{item:equiv-relation} the closed equivalence relation on $X'$ generated by 
        \begin{align*}
            \{(x_1, x_2)\in X':\text{there exists }b\in B\text{ with }\phi^{-1}(x_i)\cap p^{-1}(b)\neq\emptyset\text{ for }i=1,2\}
        \end{align*}
        has a single equivalence class.
    \end{enumerate}
\end{claim}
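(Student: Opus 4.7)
My plan is to exploit the two fibre structures attached to $E$: the projection $\phi:E\to X'$ is a Grassmannian fibre bundle, while the projection $p:E\to B$ has fibres that are linear sections of $X'$. Items (1) and (3) are then essentially formal, and only item (2) requires substantive input, namely the Bertini theorems on smoothness and connectedness of generic linear sections.

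For item (1), observe that $E$ is a closed subvariety of the projective variety $X'\times B$, cut out by the incidence condition, hence projective. To see smoothness, I would identify the fibre $\phi^{-1}(x)$ over a point $x\in X'$, corresponding to a line $\ell_x\subset\mathbb{C}^{N+1}$, with the Grassmannian of $(N-d+1)$-dimensional subspaces of $\mathbb{C}^{N+1}/\ell_x$. This realises $\phi:E\to X'$ as a locally trivial Grassmannian bundle in the Zariski topology, so $E$ is smooth. The map $\phi$ is proper because $B$ is compact (so $\phi$ is a projection from a closed subvariety of $X'\times B$ with compact fibres), and dominant because all these fibres are non-empty.

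For item (2), the fibre $p^{-1}(V)$ maps bijectively under $\phi$ onto $X'\cap\mathbb{P}(V)$, the intersection of $X'$ with a projective subspace of dimension $N-d+1$. When $d\geq 2$, I would invoke Bertini's theorem in both its smoothness and connectedness forms (the latter following, e.g., from the Lefschetz hyperplane theorem applied to each successive linear section of dimension $\geq 2$): iterated $d-1$ times, this yields that a generic such intersection is smooth, irreducible, and of complex dimension $1$, i.e., a closed Riemann surface. The degenerate case $d=1$ is trivial, since then $B$ reduces to a point and $p^{-1}(B)=X'$ itself, which is already a smooth closed Riemann surface.

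For item (3), given any $x_1,x_2\in X'$ with corresponding lines $\ell_1,\ell_2\subset\mathbb{C}^{N+1}$, the span $\ell_1+\ell_2$ has dimension at most $2$; since $N-d+2\geq 2$ (because $N\geq d$, as $X'\subseteq\mathbb{P}^N$ has dimension $d$), we may extend $\ell_1+\ell_2$ to an $(N-d+2)$-dimensional subspace $V\subset\mathbb{C}^{N+1}$, i.e., a point $V\in B$ with $x_1,x_2\in\mathbb{P}(V)\cap X'=\phi(p^{-1}(V))$. Hence $(x_1,x_2)$ already lies in the generating relation before taking any equivalence closure, so the generated equivalence relation has a single class. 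The only step that relies on non-trivial input is the connectedness of a generic linear section in (2); everything else is bookkeeping.
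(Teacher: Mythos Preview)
Your proposal is correct and follows essentially the same approach as the paper: both identify $\phi:E\to X'$ as the Grassmannian bundle of $(N-d+1)$-planes in $\mathbb{C}^{N+1}/\mathcal{O}_{X'}(-1)$ for item (1), invoke Bertini-type theorems (the paper cites generic smoothness from Grauert--Remmert and connectedness from Fulton--Hansen, while you phrase it via iterated Bertini/Lefschetz) for item (2), and observe that any two points of $X'$ lie on a common $(N-d+2)$-plane for item (3). The only cosmetic difference is that the paper does the dimension count for the fibres of $p$ explicitly rather than by iteration.
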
 
\begin{proof}
    We show the different claims in turn.
    \begin{enumerate}
        \item Note that $\phi:E\to X$ is the Grassman $(N-d+1)$-plane bundle of the vector bundle $\mathbb{C}^{N+1}/\mathcal{O}_X(-1)$, and in particular $E$ is a smooth projective variety and $\phi:E\to X$ is proper and dominant.
        \item  By \cite[Theorem 1.22, pp. 108]{Grauert1994}, the generic fibre of $p$ is smooth. Moreover, since $\phi:E\to X$ is the Grassman $(N-d+1)$-plane bundle of the vector bundle $\mathbb{C}^{N+1}/\mathcal{O}_X(-1)$, we have 
        \begin{align*}
            \dim E=d+\dim\mathrm{Gr}(N-d+1, N)=d+(N-d+1)(d-1),
        \end{align*}
        and hence the general fibre of $p:E\to B$ has dimension $\dim E-\dim B=\dim E-(N-d+2)(d-1)=1$. 
        \par By \cite[Theorem 1.1]{Fulton1981}, the map $p:E\to B$ has a non-empty connected fibre over a general point.
        \item Immediate from the fact that any two points in $X'$ lie on some $(N-d+2)$-dimensional subspace.
    \end{enumerate} 
\end{proof} 
\subsection{Applying Theorem \ref{thm:supermain}}\label{subsec:applying-thm-supermain} Let $A\subset B$ be the exceptional subset to Claim \ref{claim:gcc}(\ref{item:nice-fibres}). Thus for any $b\in B\setminus A$, the fibre $p^{-1}(b)$ is a smooth connected closed Riemann surface.
\par Let $q:\tilde{B}\to B\setminus A$ be the universal cover of $B\setminus A$, and suppose that 
\[
\begin{tikzcd}
    q^*E\arrow[r, "q^*p"]\arrow[d, "q"] & \tilde{B} \arrow[d, "q"] \\
    E\setminus p^{-1}(A)\arrow[r, "p"] & B\setminus A
\end{tikzcd}    
\]
is a pullback diagram of the bundle $p:E\setminus p^{-1}(A)\to B\setminus A$. Note that $q^*p$ is topologically a product, since $\tilde{B}$ is simply connected. By Proposition \ref{prop:energy-constant-iff-ph} applied to the map $F\circ q:q^*E\to M$, where $F=f'\circ\phi$, we see that the energy of $(F\circ q)_b$ does not depend on $b\in\tilde{B}$. To apply Theorem \ref{thm:supermain}, it remains to show that $F\circ q:q^*E\to M$ is transverse to the fibres of $q^*p$. Assume to the contrary, then $F\circ q$ is constant on each fibre of $q^*E$. Thus $F=f'\circ\phi$ is constant on the fibres of $p:E\setminus p^{-1}(A)\to B\setminus A$. By Claim \ref{claim:gcc}(\ref{item:equiv-relation}), it follows that $f'$ is constant. Hence $f$ is constant, and there is nothing to prove.
\par Thus by Theorem \ref{thm:supermain}, $F\circ q$ either has image contained in a closed geodesic, or factors as $\tilde{h}\circ\tilde{\phi}$, where $\tilde{\phi}:q^*E\to Y$ is a holomorphic map to a closed hyperbolic Riemann surface $Y$ and $\tilde{h}:Y\to M$ is harmonic. If the image of $F\circ q$ is contained in a geodesic, so is the image of $f$, and there is nothing to prove. 
\par Assume therefore that we are in the latter case, $F\circ q=\tilde{h}\circ\tilde{\phi}$. Recall that $\phi:E\to X'$ is the holomorphic map from Claim \ref{claim:gcc}. Thus we have a commuting diagram 
\[
    \begin{tikzcd}
        q^*E\arrow[d, "\tilde{\phi}"]\arrow[r, "q"] & E\setminus p^{-1}(A)\arrow[r, "\phi"]  & X'\arrow[d, "f'"] \\ 
        Y \arrow[rr, "\tilde{h}"] &  & M
    \end{tikzcd}     
\]
Recall that $\pi_1(B\setminus A)$ acts on $\tilde{B}$ by covering transformations. This induces an action of $\pi_1(B\setminus A)$ on $q^*E$. 
\begin{claim}\label{claim:cd-props}
    The diagram above has the following properties:
    \begin{enumerate}
        \item\label{item:finite-index} There exists a finite index subgroup $\Gamma\leq\pi_1(B\setminus A)$ such that $\tilde{\phi}$ is invariant under $\Gamma$, and 
        \item\label{item:constant-conn-components} $\tilde{\phi}$ is constant on the connected components of any general fibre of $\phi\circ q$. 
    \end{enumerate}
\end{claim}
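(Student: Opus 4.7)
The plan splits along the two items. For (1), the main inputs are the De Franchis--Severi theorem and Hartman's uniqueness of harmonic representatives, together with the simple-connectedness of $\tilde{B}$; for (2), the input is Sampson's structure theorem for harmonic maps from surfaces, combined with the factorization $F\circ q=\tilde{h}\circ\tilde{\phi}$ already at hand.

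For (1), since $\tilde{B}$ is simply connected, I would first trivialize the topological bundle $q^*p:q^*E\to\tilde{B}$ as $\tilde{B}\times F$, so that $\tilde{\phi}$ becomes a continuous family $\tilde{\phi}_{\tilde{b}}:(F,J_{\tilde{b}})\to Y$ of non-constant holomorphic maps, with $J_{\tilde{b}}$ the complex structure on $F$ inherited from $p^{-1}(q(\tilde{b}))$. By continuity and connectedness, all these maps share a common topological homotopy class $\tau_0\in[F,Y]$. Let $\mu:\pi_1(B\setminus A)\to\Mod(F)$ denote the monodromy representation; the key step is to show that the $\mu$-orbit of $\tau_0$ in $[F,Y]$ is finite. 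Fixing a base point $\tilde{b}_0\in\tilde{B}$, each orbit element $\mu(\gamma)^*\tau_0$ is realized by a non-constant holomorphic map $(F,J_{\tilde{b}_0})\to Y$, namely $\tilde{\phi}_{\gamma\tilde{b}_0}\circ\mu(\gamma)$ (using the relation $J_{\gamma\tilde{b}_0}=\mu(\gamma)^*J_{\tilde{b}_0}$). De Franchis--Severi bounds the cardinality of such maps, and Hartman's uniqueness of the harmonic representative implies that distinct homotopy classes give distinct holomorphic maps, so the orbit is finite. Take $\Gamma=\mu^{-1}(\mathrm{Stab}(\tau_0))$, a finite-index subgroup. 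For $\gamma\in\Gamma$ and any $\tilde{b}$, the two holomorphic maps $\tilde{\phi}_{\tilde{b}}$ and $\tilde{\phi}_{\gamma\tilde{b}}\circ\mu(\gamma)$ from $(F,J_{\tilde{b}})$ to $Y$ lie in the same class $\tau_0$, hence coincide by Hartman; translating back yields $\tilde{\phi}\circ\gamma=\tilde{\phi}$ on all of $q^*E$.

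For (2), the factorizations $F\circ q=\tilde{h}\circ\tilde{\phi}$ and $F=f'\circ\phi$ together imply $\tilde{\phi}((\phi\circ q)^{-1}(x))\subseteq\tilde{h}^{-1}(f'(x))$, so it suffices to show that $\tilde{h}^{-1}(f'(x))$ is finite for $x$ in a non-empty open subset of $X'$. Since case (1) of Theorem \ref{thm:supermain} is excluded by hypothesis, the image of $\tilde{h}$ is not contained in a closed geodesic; by Sampson's theorem the zeros of $\partial\tilde{h}$ are discrete, so $\tilde{h}$ is a local diffeomorphism outside a discrete subset of $Y$. In particular the locus of $m\in\tilde{h}(Y)$ with $\tilde{h}^{-1}(m)$ finite is open dense in $\tilde{h}(Y)$, while its complement has real dimension at most $1$. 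On the other hand, $f'(X')\subseteq\tilde{h}(Y)$ and $f'(X')$ has real dimension $2$ (matching $\tilde{h}(Y)$), so $f'(X')$ is not contained in this thin complement. Hence for $x$ in a non-empty open subset of $X'$, $\tilde{h}^{-1}(f'(x))$ is finite, and continuity of $\tilde{\phi}$ forces its image on each connected component of $(\phi\circ q)^{-1}(x)$ to collapse to a point. The main obstacle is the monodromy bookkeeping in (1), chiefly the careful comparison of the complex structures $J_{\tilde{b}}$ and $J_{\gamma\tilde{b}}$ needed to invoke Hartman's uniqueness fibrewise and promote it to a global identity on $q^*E$; (2) reduces to a dimension count once Sampson's dichotomy is in place.
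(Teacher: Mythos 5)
Your proposal is correct, and splits naturally along the two items.

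For item~(1), your argument is essentially the paper's, with the monodromy bookkeeping spelled out: the paper, too, invokes the finiteness of the set of non-constant holomorphic maps from the fibre to the closed hyperbolic surface $Y$ (this is precisely De Franchis--Severi) and then upgrades the fibrewise equality $\tilde{\phi}\circ\gamma=\tilde{\phi}$ to a global one via uniqueness of harmonic representatives. The paper phrases the upgrade as a rigidity statement (``equal on one fibre $\Rightarrow$ equal on all fibres''), while you apply the uniqueness fibre by fibre after verifying that the two maps lie in the same homotopy class $\tau_0$; these are the same idea.

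For item~(2), you take a genuinely different route. The paper's argument is shorter: from $F=f'\circ\phi$ and $F\circ q=\tilde h\circ\tilde\phi$, the holomorphic map $\tilde\phi$ sends the (smooth, complex) fibre of $\phi\circ q$ into a single level set $\tilde h^{-1}(m)$, which by real analyticity and non-constancy of $\tilde h$ is a real-analytic subset of $Y$ of real dimension at most one; the open mapping theorem for holomorphic maps to Riemann surfaces then forces $\tilde\phi$ to be constant on each connected component. You instead aim to show that $\tilde h^{-1}(f'(x))$ is actually \emph{finite} for $x$ in a non-empty open set, using Sampson's structure theorem for the branch set of $\tilde h$ and a dimension count comparing $f'(X')$ with the locus where $\tilde h$ has non-finite fibres. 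This works but is heavier: the finiteness of $\tilde h^{-1}(m)$ at generic $m$ needs the fact that a compact real-analytic subset of $Y$ that is discrete away from a finite set is finite, and the dimension count that $f'(X')$ has real dimension $2$ again secretly invokes Sampson's theorem applied to $f'$ (to rule out rank $\le 1$ everywhere, which has been excluded only because case~(1) of Theorem~\ref{thm:supermain} was ruled out). The dimension-one bound on fibres of $\tilde h$ plus the open mapping theorem buys you the same conclusion with none of these intermediate points, and gives it at once on the Zariski-open locus where $\phi^{-1}(t)$ is a smooth submanifold rather than only on a (possibly non-dense) open set.

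One small caution on your wording in (2): you conclude constancy only for $x$ in ``a non-empty open subset,'' whereas ``general fibre'' is used in the paper to mean the complement of a proper analytic subset; your dimension count can be pushed to yield this, since the locus where $f'$ has real rank $2$ is the complement of a proper real-analytic subset, but it is worth saying so explicitly.
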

We defer the proof of Claim \ref{claim:cd-props} to the next subsection.
\par Let $m=[\pi_1(B\setminus A):\Gamma]$ be the index of $\Gamma$. We define tne a map $\hat{\phi}:E\setminus p^{-1}(A)\to \Sym^{m} Y$ by 
\begin{align*}
    \hat{\phi}(x)=\{\tilde{\phi}(\gamma\tilde{x}): \gamma\Gamma\in\mathrm{Deck}(q)/\Gamma\},
\end{align*}
where $\tilde{x}\in q^*E$ is some arbitrary lift of $x\in E\setminus p^{-1}(A)$. The right-hand side of this equation denotes a multiset, and $\Sym^d Y$ denotes the configuration space of $d$ (unlabelled) points on the Riemann surface $Y$. It is easy to see that $\hat{\phi}$ is holomorphic. 
\begin{claim}\label{claim:hatphi-rank-one}
    We have $\mathrm{rank}_\mathbb{C}D\hat{\phi}=1$ on an open dense subset of $E$.
\end{claim}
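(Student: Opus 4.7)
The plan is to compute $D\hat\phi$ directly using the identity $F\circ q'=\tilde h\circ\tilde\phi'$, where $q':q^*E/\Gamma\to E\setminus p^{-1}(A)$ is the degree-$m$ covering induced by $q$ and $\tilde\phi':q^*E/\Gamma\to Y$ is the descent of $\tilde\phi$ guaranteed by Claim \ref{claim:cd-props}(\ref{item:finite-index}). The guiding idea is that if $\tilde h$ is a local immersion at each of the $m$ values $\tilde\phi'((q')^{-1}(u))$, then each local branch of $\tilde\phi'$ over $u$ is infinitesimally determined by $F$ and by its current value, so $D\hat\phi$ must factor injectively through $DF$; hence $\mathrm{rank}_{\mathbb{C}}D\hat\phi=\mathrm{rank}_{\mathbb{C}}DF=1$ at a generic point.

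First I would record two preliminaries. Since $F=f'\circ\phi$ is non-constant and, being in the second alternative of Theorem \ref{thm:supermain}, its image is not contained in a closed geodesic, the harmonic map $\tilde h:Y\to M$ is non-constant with image not in a geodesic, so $d\tilde h$ has only isolated zeros and the ramification locus $R\subset Y$ is finite. Likewise $\tilde\phi'$ is non-constant, as $\tilde h\circ\tilde\phi'=F\circ q'$ is. After enlarging $\Gamma$ if necessary to the maximal finite-index subgroup of $\pi_1(B\setminus A)$ fixing $\tilde\phi$ (still of finite index since it contains the original $\Gamma$), we may assume that on a dense open $E^\circ\subseteq E\setminus p^{-1}(A)$ the $m$ values $\tilde\phi'((q')^{-1}(u))$ are pairwise distinct. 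Shrinking $E^\circ$ further, I would also require $F$ to have complex rank $1$ at $u$ (possible since $F$ is pluriharmonic of complex rank $\leq 1$ by Proposition \ref{prop:energy-constant-iff-ph} and is non-constant) and each value $\tilde\phi'((q')^{-1}(u))$ to lie in $Y\setminus R$.

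Over a small open $U\subseteq E^\circ$, pick local sections $\sigma_1,\ldots,\sigma_m$ of $q'$ and set $\phi_i=\tilde\phi'\circ\sigma_i$, so that $\hat\phi|_U(u)=\{\phi_1(u),\ldots,\phi_m(u)\}$ and $\tilde h\circ\phi_i=F|_U$. At $u\in U$, the $\phi_i(u)$ being distinct places $\hat\phi(u)$ in the smooth locus of $\Sym^m Y$, whose tangent space is $\bigoplus_{i=1}^m T_{\phi_i(u)}Y$, and $D\hat\phi|_u(v)=(D\phi_i|_u(v))_i$. Differentiating $\tilde h\circ\phi_i=F|_U$ gives $D\tilde h|_{\phi_i(u)}\circ D\phi_i|_u=DF|_u$, and since $\phi_i(u)\notin R$ the map $D\tilde h|_{\phi_i(u)}$ is injective, uniquely determining $D\phi_i|_u(v)$ from $DF|_u(v)$. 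Thus $D\hat\phi|_u=\Lambda\circ DF|_u$, where $\Lambda:\mathrm{image}(DF|_u)\to\bigoplus_i T_{\phi_i(u)}Y$, $w\mapsto((D\tilde h|_{\phi_i(u)})^{-1}(w))_i$, is injective; therefore $\mathrm{rank}_{\mathbb{C}}D\hat\phi|_u=\mathrm{rank}_{\mathbb{C}}DF|_u=1$.

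I expect the main technical point to be the maximality reduction on $\Gamma$, which ensures that $\hat\phi$ generically lands in the smooth locus of $\Sym^m Y$; without it, the tangent calculation at the diagonal strata of $\Sym^m Y$ would require a more delicate treatment, though the conclusion would be the same via the injective linear map $\Sym^{m'}Y\hookrightarrow\Sym^m Y$ sending each point to the multiset with each component repeated $m/m'$ times.
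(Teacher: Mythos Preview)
Your argument is correct and rests on the same key observation as the paper: once $\tilde h$ is a local immersion near each $\tilde\phi(\gamma\tilde x)$, the level sets of $\tilde\phi$ on $\gamma U$ coincide with those of $F\circ q$, so all the branches entering $\hat\phi$ are determined (infinitesimally) by $F$, which has complex rank one. The paper packages this slightly differently: rather than differentiating the branches $\phi_i$ directly, it observes that the fibres of $\tilde\phi$ on $\mathrm{Deck}(q)U$ form a $\mathrm{Deck}(q)$-invariant holomorphic foliation $\mathcal F$ (precisely because they agree with the fibres of the $\mathrm{Deck}(q)$-invariant map $F\circ q$), and then notes that $\hat\phi$ factors through the one-dimensional local leaf space $\Lambda$ of $\mathcal F$. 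This foliation formulation sidesteps your diagonal-locus issue entirely---no enlargement of $\Gamma$ is needed---since the factorisation through $\Lambda$ makes no reference to whether the $m$ values $\tilde\phi(\gamma\tilde x)$ are distinct. Your direct tangent-space computation is arguably more elementary; the paper's version is a bit cleaner in that it avoids any discussion of the stratification of $\Sym^m Y$.
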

We defer the proof of Claim \ref{claim:hatphi-rank-one} until \S\ref{subsec:claim-hatphi-rank-one}.
\par Hence the image of $\hat{\phi}$ is a (complex) one-dimensional constructible subset $D\subset\Sym^m Y$, by Chevalley's theorem. Let $A'\subset D$ be such that $D\setminus A$ is a qusiprojective variety of complex dimension one. 
\par Since $E\setminus (p^{-1}(A)\cup \hat{\phi}^{-1}(A'))$ is a complex manifold, by standard properties of normalization \cite[Proposition, pp. 180]{Grauert1984}, $\hat{\phi}$ factors as 
\begin{align*}
    E\setminus (p^{-1}(A)\cup \hat{\phi}^{-1}(A'))\stackrel{\phi}{\longrightarrow} \hat{D} \stackrel{\theta}{\longrightarrow} D,
\end{align*}
where $\theta:\hat{D}\to D$ is the normalization of $D$. Normal one-dimensional complex analytic spaces are smooth, so $\hat{D}$ is in fact a finite type Riemann surface. We fill in the punctures of $\hat{D}$, and denote the resulting closed Riemann surface $\hat{D}$ as well, by a slight abuse of notation. Note that by the Schwarz lemma, the map $\phi:E\setminus (p^{-1}(A)\cup\hat{\phi}^{-1}(A'))\to \hat{D}$ is meromorphic in the sense of Andreotti \cite{Andreotti1960}, and can hence be extended to a holomorphic map $\theta:E\to\hat{D}$ by \cite[Theorem 4]{Andreotti1960}. Moreover, it is clear that $F$ factors through $\theta$. 
\par By Claim \ref{claim:cd-props}(\ref{item:constant-conn-components}), it follows that $\theta$ is constant on the connected components of the fibres of $\phi$. We let $\phi\circ \rho=\alpha\circ\beta$ be the Stein factorization of $\phi\circ\rho:E\to X$, such that $\beta:E\to S$ has connected fibres, and such that $\alpha:S\to X$ is finite. Let $K\subset X$ be the proper analytic subset containing the singular locus of $X$, such that $\alpha^{-1}(x)$ has exactly $\deg\alpha$ distinct points, for $x\in X\setminus K$. Thus we define $\hat{\theta}:X\setminus K\to\Sym^{\deg\alpha} \hat{D}$ by 
\begin{align*}
    \hat{\theta}(x)=\{\theta(y):y\in \alpha^{-1}(x)\}.
\end{align*}
From an argument that is identical to the proof of Claim \ref{claim:hatphi-rank-one} and the paragraph following its statement, it follows that there is a holomorphic map $\xi:X\to C$ for a closed Riemann surface $C$, such that $f$ factors through $\xi$.
\subsection{Proof of Claim \ref{claim:cd-props}}\label{subsec:pf-claim-cd-props}
\begin{enumerate}
    \item Fix $b\in B\setminus A$ and one of its $q$-preimages $\tilde{b}\in \tilde{B}$. Since $Y$ is a closed hyperbolic Riemann surface, there are only finitely many holomorphic maps $p^{-1}(b)\to Y$. Thus there is a finite index subgroup $\Gamma\leq\mathrm{Deck}(q)$ such that $\tilde{\phi}\circ\gamma=\tilde{\phi}$ on $(q^*p)^{-1}(\tilde{b})$. By uniqueness of harmonic maps in a given homotopy class, if $\tilde{\phi}\circ\gamma=\tilde{\phi}$ on one fibre of $q^*E$, then $\tilde{\phi}\circ\gamma=\tilde{\phi}$ on all of $q^*E$.  Thus $\tilde{\phi}\circ\gamma=\tilde{\phi}$ for all $\gamma\in\Gamma$.
    \item  Let $t\in X'$ be arbitrary such that $\phi^{-1}(t)$ is a closed complex submanifold (which is a Zariski open condition). Thus $T:=q^{-1}(\phi^{-1}(t)\setminus p^{-1}(A))$ is a complex submanifold, on which $\tilde{h}\circ\tilde{\phi}$ is constant by definition. However, $\tilde{\phi}(T)$ either has non-empty interior, or is a point (see \cite[Open mapping theorem for holomorphic functions, pp. 109]{Grauert1984}). If $\tilde{\phi}(T)$ has non-empty interior, then $\tilde{h}$ is constant on an open set in $Y$, and is hence constant on $Y$ by classical results on harmonic maps \cite{aronszajn1956unique}. Thus $\tilde{\phi}(T)$ is a singleton, and the result is shown.
\end{enumerate}
\subsection{Proof of Claim \ref{claim:hatphi-rank-one}}\label{subsec:claim-hatphi-rank-one}  Denote by $\mathrm{Deck}(q)\cong\pi_1(B\setminus A)$ the deck group of $q$.
\par  Let $\tilde{x}\in q^*E$ be a point such that $\tilde{\phi}$ has surjective derivative in a nieghbourhood $U$ of $\tilde{x}$ and such that $\tilde{h}$ is locally an immersion near $\tilde{\phi}(\gamma\tilde{x})$ for any $\gamma\in\mathrm{Deck}(q)$. Such points lie in comeagre subset of $q^*E$. 
    \par Let $U\ni\tilde{x}$ be a small neighbourhood of $\tilde{x}$ such that 
    \begin{enumerate}
        \item  $\tilde{\phi}$ is a holomorphic submersion over $\bigcup_{\gamma\in\mathrm{Deck}(q)}\gamma U$, and 
        \item $\tilde{h}$ is a local immersion on $\bigcup_{\gamma\Gamma\in\mathrm{Deck}(q)/\Gamma} \tilde{\phi}(\gamma U)$,
    \end{enumerate}
   which exists since $[\mathrm{Deck}(q):\Gamma]<\infty$ and $\tilde{\phi}$ is $\Gamma$-invariant. 
    \par Let $\mathcal{F}$ be the foliation on $\mathrm{Deck}(q)U$ whose leaves are connected components of the fibres of $\tilde{\phi}$. Since $\tilde{h}$ is locally an immersion, the leaves of $\mathcal{F}$ are precisely the connected components of the fibres of $F\circ q$. In particular, $\mathcal{F}$ is $\mathrm{Deck}(q)$-invariant. Let $\Lambda$ be the Riemann surface parameterizing the leaves of $\mathcal{F}$. 
    \par Then $\tilde{\phi}=\alpha\circ\beta$, where $\beta:\mathrm{Deck}(q)U\to \Lambda$ is the natural projection, and $\alpha:\Lambda\to Y$ is some holomorphic map. In particular, since $\beta$ is $\mathrm{Deck}(q)$-equivariant, it follows that 
    \begin{align*}
        \hat{\phi}=\alpha\left(\gamma\cdot\beta(x):\gamma\Gamma\in\mathrm{Deck}(q)/\Gamma\right).
    \end{align*}
    It follows that the image of $\hat{\phi}$ lies within the $\alpha$ image of the map $z\to \{\gamma z:\gamma\Gamma\in\mathrm{Deck}(q)/\Gamma\}$ defined on $\Lambda$. In particular, since $\dim_\mathbb{C}\Lambda=1$, it follows that $\mathrm{rank}_\mathbb{C}D\hat{\phi}\leq 1$.
    \par Since $\tilde{\phi}$ is not constant, neither is $\hat{\phi}$, and hence $\mathrm{rank}_\mathbb{C}D\hat{\phi}\geq 1$ at a general point, concluding the proof of the claim.
\section{Mapping class groups}\label{sec:mcg}
In this section, we show Theorem \ref{thm:mcg}, following the argument of \cite{Markovic2024}. In \S\ref{subsec:bridson}, we recall the result of Bridson \cite{Bridson2010} that is used in the proof of Theorem \ref{thm:mcg}, that we then show in \S\ref{subsec:pf-mcg}.
\subsection{A result of Bridson}\label{subsec:bridson}
We first state some preliminary results due to Bridson \cite[Theorem B, Remark 1]{Bridson2010} that will be crucial in our proof of Theorem \ref{thm:mcg}.
\begin{proposition}
    Let $\Gamma\leq\Mod_{g,n}$ be a finite index subgroup of the mapping class group of a surface of genus $g\geq 3$ with $n\geq 0$ punctures. Let $X$ be a $\mathrm{CAT}(0)$ space and $\phi:\Gamma\to\mathrm{Isom}(X)$ be a homomorphism whose image consists of hyperbolic isometries. Then any power of a Dehn multitwist that lies in $\Gamma$ also lies in $\ker\phi$.
\end{proposition}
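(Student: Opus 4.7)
The strategy is a contradiction argument combining CAT(0) rigidity of centralizers of hyperbolic isometries with the rich centralizer and commutator structure of Dehn multitwists in $\Mod_{g,n}$. Let $T$ be a Dehn multitwist along a multicurve $\gamma\subset\Sigma_{g,n}$ with $T^k\in\Gamma$, and suppose for contradiction that $\phi(T^k)\neq 1$; by hypothesis $\phi(T^k)$ is then hyperbolic with translation length $|\phi(T^k)|_\infty>0$.

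The first ingredient I would use is the following standard CAT(0) fact: for any hyperbolic isometry $g$ of $X$, the minset $\mathrm{Min}(g)$ splits isometrically as $\mathbb{R}\times Y_g$ with $g$ acting as translation on the $\mathbb{R}$-factor. Every isometry commuting with $g$ preserves this splitting, and its $\mathbb{R}$-component is itself a translation. This yields a translation-length homomorphism $\tau_g:Z_{\mathrm{Isom}(X)}(g)\to\mathbb{R}$ with $\tau_g(g)=|g|_\infty\neq 0$. Specialising to $g=\phi(T^k)$ and pulling back through $\phi$, I obtain a homomorphism $\tau:=\tau_{\phi(T^k)}\circ\phi:Z_\Gamma(T^k)\to\mathbb{R}$ that is nonzero on $T^k$ itself.

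The second ingredient is the structure of $Z_\Gamma(T^k)$, which contains a finite-index subgroup of the stabiliser of $\gamma$ in $\Mod_{g,n}$, and in particular the mapping class groups of the components of $\Sigma_{g,n}\setminus\gamma$. Because $g\geq 3$, there is always enough room on the surface to embed a lantern or chain configuration disjoint from $\gamma$, whose classical Dehn-twist relation in $\Mod_{g,n}$ (combined with conjugation by elements stabilising $\gamma$) rewrites a suitable power $T^{kN}$ as a product of commutators of elements of $Z_\Gamma(T^k)$, after passing to sufficiently high powers so that all participating twists lie in $\Gamma$. Applying $\tau$, which kills commutators because $\mathbb{R}$ is abelian, then forces $N\cdot\tau(T^k)=0$, contradicting $\tau(T^k)\neq 0$. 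Hence $\phi(T^k)=1$.

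The main obstacle is the combinatorial bookkeeping needed to write a power of $T$ itself as a product of commutators \emph{inside its own centralizer}: classical relations such as the lantern express $T$ as a product of twists that do not all commute with $T$, so a conjugation-plus-iteration procedure, staged inside the stabiliser of $\gamma$, is required to replace the non-centralising factors by centralising ones. The assumption $g\geq 3$ is precisely what guarantees that this combinatorial setup can be carried out uniformly over every possible multicurve $\gamma$. A cleaner alternative would bypass $\tau$ altogether and work directly with the stable translation length $|\cdot|_\infty$, using its subadditivity and conjugation-invariance to force $|\phi(T^k)|_\infty=0$, which for a semisimple isometry means $\phi(T^k)$ is elliptic, hence trivial by the hypothesis on $\phi$.
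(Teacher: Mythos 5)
The paper does not prove this result: it is cited directly as a theorem of Bridson (Theorem~B and Remark~1 of his paper on semisimple actions of mapping class groups on $\mathrm{CAT}(0)$ spaces), so there is no in-paper proof to compare against. Your argument reconstructs Bridson's own proof, and the two structural ingredients --- the product decomposition $\mathrm{Min}(g)\cong Y\times\mathbb{R}$ of the minimal set of a hyperbolic isometry together with the induced translation-length homomorphism $\tau\colon Z_{\mathrm{Isom}(X)}(g)\to\mathbb{R}$, and the rewriting of a power of the twist as a product of commutators inside its centralizer --- are exactly the two halves of that proof.

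There is, however, a concrete error in the way you handle multitwists. The claim that for $g\geq 3$ there is always a lantern or chain configuration disjoint from the multicurve $\gamma$ is false when $\gamma$ is (contained in) a pants decomposition: every complementary component is then a pair of pants, which is too small to house either configuration. The correct route, and the one Bridson takes, is to first run the centralizer/commutator argument for a single Dehn twist $T_{\gamma_i}$, where the complement of a single curve on a genus $\geq 3$ surface always does have the required room, concluding that a suitable power of each $T_{\gamma_i}$ lies in $\Gamma$ with elliptic image. These powers commute, so a bounded-orbit/Bruhat--Tits argument gives a common fixed point; hence some power of the multitwist $T$ has elliptic image, which must be the identity under the hypothesis that the image consists of hyperbolic isometries, and then $\phi(T^k)=1$ because hyperbolic isometries have infinite order. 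Finally, the ``cleaner alternative'' you propose at the end does not go through as stated: stable translation length $|\cdot|_\infty$ is conjugation-invariant but is not subadditive for noncommuting isometries of a $\mathrm{CAT}(0)$ space (two parabolics of $\mathbb{H}^2$ can compose to a hyperbolic, for example), so the homomorphism $\tau$ on the centralizer, rather than $|\cdot|_\infty$ alone, really is the right tool.
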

We will apply this with $X$ being the universal cover of $M$. Since $M$ is convex cocompact, the deck group of this cover $X\to M$ consists entirely of hyperbolic isometries. Using the fact that any point-pushing mapping class that corresponds to a simple closed curve is a mutlitwist, we get the following corollary.
\begin{corollary}\label{cor:bridson}
    Let $M$ be as in Assumptions \ref{ass:mfd}, and let $\Gamma\leq\Mod_{g,n+1}$ be a finite index subgroup, with $g\geq 3, n\geq 0$. Then for any homomorphism $\phi:\Gamma\to\pi_1(M)$ and for any simple closed curve $\gamma$ on $\Sigma_{g,n}$, we have $\phi(\iota(\gamma)^k)=0$, whenever $\iota(\gamma)^k\in \Gamma$.
\end{corollary}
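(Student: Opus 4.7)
The plan is to reduce directly to the preceding proposition by realizing $\pi_1(M)$ as a group of isometries of a $\mathrm{CAT}(0)$ space. First, I would take $X=\tilde{M}$, the Riemannian universal cover of $M$. Because $M$ satisfies Assumptions \ref{ass:mfd}, the pulled-back metric on $\tilde{M}$ is complete and has negative sectional curvature, so $\tilde{M}$ is a Hadamard manifold and in particular a $\mathrm{CAT}(0)$ space. The deck transformation action identifies $\pi_1(M)$ with a subgroup of $\mathrm{Isom}(\tilde{M})$.

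Next I would verify that every non-trivial element of $\pi_1(M)$ acts as a hyperbolic isometry on $\tilde{M}$. The action is free and properly discontinuous, so no non-trivial element is elliptic, and convex cocompactness rules out parabolic elements (these would correspond to cusps in the convex core). Composing $\phi$ with the inclusion $\pi_1(M)\hookrightarrow\mathrm{Isom}(\tilde{M})$ therefore produces a homomorphism $\tilde{\phi}:\Gamma\to\mathrm{Isom}(\tilde{M})$ whose non-identity image elements are all hyperbolic, which is precisely the hypothesis required by the proposition.

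The last ingredient I would invoke is the standard consequence of the Birman exact sequence \cite[\S 4.2]{Farb2011} that, for a simple closed curve $\gamma$ on $\Sigma_{g,n}$, the point-pushing class $\iota(\gamma)\in\Mod_{g,n+1}$ is a Dehn multitwist: concretely $\iota(\gamma)=T_{\delta_+}T_{\delta_-}^{-1}$, where $\delta_\pm$ are the two simple closed curves in $\Sigma_{g,n+1}$ bounding an annular neighbourhood of $\gamma$ in which the added puncture sits. Hence $\iota(\gamma)^k$ is itself a power of a Dehn multitwist, and the proposition applied to $\tilde{\phi}$ forces $\tilde{\phi}(\iota(\gamma)^k)=\mathrm{id}$, equivalently $\phi(\iota(\gamma)^k)=1$ in $\pi_1(M)$, whenever $\iota(\gamma)^k\in\Gamma$.

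The only subtle point worth flagging is reconciling the hypothesis of the proposition (image consists of hyperbolic isometries) with the fact that $\tilde{\phi}$ may have non-trivial kernel, so that the identity isometry lies in $\tilde{\phi}(\Gamma)$; but since the identity is the only non-hyperbolic element that can occur in the image, Bridson's argument still produces the desired conclusion, which is precisely that Dehn multitwist powers land on the identity.
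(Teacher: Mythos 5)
Your proposal is correct and follows the same route the paper sketches: take $X=\tilde M$, use convex cocompactness to verify that every non-identity deck transformation is hyperbolic, observe that $\iota(\gamma)$ is a Dehn multitwist $T_{\delta_+}T_{\delta_-}^{-1}$ for a simple closed curve $\gamma$, and invoke Bridson's result. The caveat you flag about the identity lying in the image is reasonable but is the standard reading of Bridson's hypothesis, so this is not a deviation from the paper's argument.
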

Recall that $\iota$ is the map that embeds $\pi_1(\Sigma_{g,n})$ as the point-pushing subgroup $\Pi_{g,n}\leq\Mod_{g,n+1}$.
\subsection{Proof of Theorem \ref{thm:mcg}}\label{subsec:pf-mcg}
Let $M$ be a manifold satisfying Assumptions \ref{ass:mfd}, $\Gamma\leq\Mod_{g,n+1}$ be a finite index subgroup and $\phi:\Gamma\to\pi_1(M)$ be a strongly point-pushing homomorphism. Recall the Birman exact sequence 
\begin{align*}
    1\to\pi_1(\Sigma_{g,n})\stackrel{\iota}{\longrightarrow}\Mod_{g,n+1}\stackrel{\mathcal{F}}{\longrightarrow}\Mod_{g,n}\to 1,
\end{align*}
and the point-pushing subgroup $\Pi_{g,n}=\mathrm{im}(\iota)\leq\Mod_{g,n+1}$.
\par Let $K=\iota^{-1}(\Pi_{g,n}\cap \Gamma)\leq\pi_1(\Sigma_{g,n})$. Then, as $[\Mod_{g,n+1}:\Gamma]<\infty$, we have $[\pi_1(\Sigma_{g,n}):K]<\infty$. Thus $K\leq\pi_1(\Sigma_{g,n})$ corresponds to a finite covering map $p:\Sigma_{h,m}\to\Sigma_{g,n}$ via $\mathrm{im}(p_*)=K$. Note that by Corollary \ref{cor:bridson}, the map $\phi\circ\iota\circ p_*$ annihilates any simple closed curve on $\Sigma_{h,m}$. In particular, it descends to a map $\pi_1(\Sigma_h)\to\pi_1(M)$. Let $\psi:\Sigma_h\to M$ be a continuous map representing $\phi\circ\iota\circ p_*$, which exists since both $\Sigma_h$ and $M$ are Eilenberg--MacLane spaces for their respective fundamental groups. 
\par  We denote by $\sigma_p$ the holomorphic map $\Teich_{g,n}\to\Teich_{h}$ obtained by lifting a complex structure $X\in\Teich_{g,n}$ via $p$ to $\Sigma_{h,m}$, and then filling in the punctures of the resulting Riemann surface. Note that Theorem \ref{thm:mcg}(\ref{item:geometry}) follows immediately from Theorem \ref{thm:inframain} once we show that $\mathrm{E}_\psi$ is constant on the image of $\sigma_p$. We now show how to conclude Theorem \ref{thm:mcg}(\ref{item:topology}) from the constancy of $\mathrm{E}_\psi\circ\sigma_p$, and then we show that $\mathrm{E}_\psi\circ\sigma_p$ is constant in the next subsection. 
\par Assume that $\mathrm{E}_\psi\circ\sigma_p$ is constant. By Theorem \ref{thm:inframain} (and Proposition \ref{prop:schiffer}), the map $F_\psi$ factors over $\sigma_p(\Teich_{g,n})$ through a holomorphic map $\xi:\Teich_{g,n+1}/\Gamma\cap\Pi_{g,n}\to Y$ to a closed hyperbolic Riemann surface $Y$. By \cite[Propositions 2.3 and 2.4]{Markovic2022}, the map $\xi$ is invariant under a finite index subgroup $\Gamma'\leq\Mod_{g,n+1}$. This shows Theorem \ref{thm:mcg}(\ref{item:topology}) with $\Theta=\Gamma\cap\Gamma'$.
\subsubsection{The energy $\mathrm{E}_\psi$ is constant on the image of $\sigma_p$} \label{subsubsec:energy-constant}
We first observe that $\mathrm{E}_\psi\circ\sigma_p$ is invariant under the action of the finite index subgroup $\mathcal{F}(\Gamma)\leq\Mod_{g,n}$. This follows from uniqueness of harmonic maps, and the fact that the free homotopy class of $\psi$ is invariant under the action of $\Gamma$. 
\begin{claim}\label{claim:bounded}
    The energy $\mathrm{E}_\psi\circ\sigma_p$ is bounded.
\end{claim}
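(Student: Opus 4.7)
The plan is to descend $\mathrm{E}_\psi\circ\sigma_p$ to a quasi-projective finite cover of moduli space and then bound it near every boundary stratum via a collar-collapsing argument. Since $\mathrm{E}_\psi\circ\sigma_p$ is invariant under the finite index subgroup $\mathcal{F}(\Gamma)\leq\Mod_{g,n}$, it descends to a continuous function $E:\mathcal{M}\to\mathbb{R}_{\geq 0}$ on $\mathcal{M}=\Teich_{g,n}/\mathcal{F}(\Gamma)$, which is a finite cover of the moduli space, hence quasi-projective. Boundedness of $E$ therefore reduces to showing that $E$ stays bounded along every sequence leaving every compact subset of $\mathcal{M}$. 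By Mumford's compactness criterion, such a sequence is represented, after passing to a subsequence and applying mapping classes, by $X_k\in\Teich_{g,n}$ in which some essential non-peripheral simple closed curves $\gamma_1,\dots,\gamma_r\subset\Sigma_{g,n}$ are being pinched, with geometry uniformly bounded away from these curves.

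The key topological step is to show that every pinching curve of the image family $\sigma_p(X_k)\in\Teich_h$ lies in $\ker\psi_*$. By construction, $\sigma_p(X_k)$ is the lift of $X_k$ along $p:\Sigma_{h,m}\to\Sigma_{g,n}$ with punctures filled in, so its pinching curves in $\Sigma_h$ are exactly the essential non-peripheral connected components $\tilde{\gamma}$ of $p^{-1}(\gamma_i)$. Each such $\tilde{\gamma}$ satisfies $p_*[\tilde{\gamma}]=[\gamma_i]^{d_i}$ (up to conjugation) for some $d_i\geq 1$, so
\begin{align*}
    \psi_*([\tilde{\gamma}])=\phi(\iota(\gamma_i)^{d_i}).
\end{align*}
Since $[\Mod_{g,n+1}:\Gamma]<\infty$, some power $\iota(\gamma_i)^{d_iN}\in\Gamma$, and as $\gamma_i$ is a simple closed curve, Corollary \ref{cor:bridson} gives $\phi(\iota(\gamma_i)^{d_iN})=0$. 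The torsion-freeness of $\pi_1(M)$ (forced by $M$ being aspherical of negative sectional curvature) then yields $\psi_*([\tilde{\gamma}])=0$.

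With this input, the bound follows from a standard collar-collapsing construction. Because each pinching curve $\tilde{\gamma}$ of $\sigma_p(X_k)$ is null-homotopic under $\psi$, the restriction of $\psi$ to a thin collar of $\tilde{\gamma}$ admits a continuous extension over the pinched disk. Gluing such extensions to the restriction of $\psi$ on the thick part (which converges on compact sets to a finite limit configuration of noded Riemann surfaces by Mumford compactness), we obtain a comparison map $f_k:\sigma_p(X_k)\to M$ freely homotopic to $\psi$ with $\mathrm{E}(f_k)$ uniformly bounded in $k$. Since the harmonic map minimizes energy in its free homotopy class, $\mathrm{E}_\psi(\sigma_p(X_k))\leq \mathrm{E}(f_k)\leq C$, proving the claim.

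The main obstacle is the topological bookkeeping in the second step: one must carefully verify that every essential non-peripheral pinching curve on $\Sigma_h$ actually covers a simple closed curve on $\Sigma_{g,n}$ in the described way, and that the resulting power lies in $\Gamma$ so that Corollary \ref{cor:bridson} applies. The comparison-map construction in the third step is routine once this is done: each thin collar contributes energy bounded by a multiple of $(\text{translation length of }\psi_*[\tilde{\gamma}])^2=0$ plus a uniformly bounded geometric error, while the thick part contributes uniformly bounded energy by Mumford compactness.
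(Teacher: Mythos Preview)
Your proposal is correct and follows essentially the same approach as the paper: both arguments pass to a degenerating sequence via Deligne--Mumford/Mumford compactness, invoke Corollary \ref{cor:bridson} to show that each pinching curve of $\sigma_p(X_k)$ is null in $\pi_1(M)$, and then bound the harmonic-map energy by a comparison map that is constant (or capped off) across the collars. The only cosmetic difference is that the paper phrases the last step as convergence of $\mathrm{E}_\psi$ to the energy on the noded limit $Z$, whereas you build explicit competitors $f_k$ on each $\sigma_p(X_k)$ and use energy minimization; these are equivalent. One small simplification: your passage through an extra power $N$ and torsion-freeness of $\pi_1(M)$ is unnecessary, since by construction $p_*(\pi_1(\Sigma_{h,m}))=K=\iota^{-1}(\Pi_{g,n}\cap\Gamma)$, so $\iota(\gamma_i)^{d_i}=\iota(p_*[\tilde{\gamma}])$ already lies in $\Gamma$ and Corollary \ref{cor:bridson} applies directly.
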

We first show how to prove that $\mathrm{E}_\psi\circ\sigma_p$ is constant assuming Claim \ref{claim:bounded}. By the result of Boggi--Pikaart \cite[Corollary 2.10]{Boggi2000}, there exists a finite index subgroup $\bar{\Gamma}\leq\mathcal{F}(\Gamma)$, such that $\Teich_{g,n}/\bar{\Gamma}$ has a compactification $\mathcal{M}$ which is a smooth projective variety. Then by standard theory in complex analysis \cite[Theorem (5.24)]{Demailly2012}, the function $\mathrm{E}_\psi\circ\sigma_p:\Teich_{g,n}/\bar{\Gamma}\to\mathbb{R}$ extends to a bounded plurisubharmonic function $\mathcal{M}\to\mathbb{R}$. Then by the strong maximum principle, $\mathrm{E}_\psi\circ\sigma_p$ is constant.
\begin{proof}[Proof of Claim \ref{claim:bounded}]
    The proof is nearly identical to that of \cite[Proposition 6.4]{Markovic2024}, so we only give a sketch.
    \par Suppose that $X_1,X_2,...\in\Teich_{g,n}$ has $\mathrm{E}_\psi(\sigma_p(X_n))\to\infty$. Since $\mathcal{F}(\Gamma)\leq\Mod_{g,n}$ is finite index, there exist mapping classes $T_n\in\mathcal{F}(\Gamma)$ such that $T_nX_n\to Y$, where $Y$ is a marked noded Riemann surface. 
    \par Let $Z$ be the marked noded Riemann surface obtained by lifting the complex structure of $Y$ via $p$. Let $\gamma_1,\gamma_2,...,\gamma_k\in\pi_1(\Sigma_h)$ be the (disjoint) simple closed curves that correspond to nodes of $Z$. Then by Corollary \ref{cor:bridson}, we have $\psi_*(\gamma_i)=1$ for $1\leq i\leq k$. 
    \par After applying a suitable homotopy, we may assume without loss of generality that $\psi:Z\to M$ is smooth on $Z$ and constant in a neighbourhood of each node. But then \[\mathrm{E}_\psi\circ\sigma_p(X_n)=\mathrm{E}_\psi\circ\sigma_p(T_nX_n)\to \mathrm{E}_\psi(Z)\leq \int_Z \abs{D\psi}^2 d\mathrm{vol}_Z<\infty,\]
    leading to the desired contradiction.
\end{proof}
\appendix
\section{Non-abelian Hodge correspondence over a moving Riemann surface}\label{appendix}
The goal of this section is to prove Theorem \ref{thm:nah}, i.e. the non-abelian Hodge correspondence for $\mathrm{PSL}(2,\mathbb{R})$ over a moving Riemann surface. If $\chi_{g,k}$ denotes the representation variety $\pi_1(\Sigma_g)\to\mathrm{PSL}(2,\mathbb{R})$ consisting of representations of Euler class $k$, then Theorem \ref{thm:nah} identifies $\Teich_g\times\chi_{g,k}$ via the non-abelian Hodge correspondence to the total space of a holomorphic fibration $\mathcal{E}(g,k)\to \Teich_g$. For $k>0$, the fibres paramaterize the divisor of the holomorphic energy, and the Hopf differential of the associated equivariant harmonic map to the hyperbolic plane $\mathbb{H}$.
\par An analogous statement was shown by Hitchin over a fixed Riemann surface \cite[Theorem (10.8)]{Hitchin1987}. Over a moving Riemann surface, the moduli spaces of solutions to the Hitchin equations for $\mathrm{GL}(2,\mathbb{C})$ were constructed by the author in the previous paper \cite[\S 4]{Tosic2023}. Since only representations of even Euler class can be lifted to $\mathrm{SL}(2,\mathbb{R})$, here we show Theorem \ref{thm:nah} by combining this construction from \cite{Tosic2023} with the arguments from \cite[\S 4]{Biswas2021}.
\par We first recall some notation and define the Euler class in \S\ref{subsec:appendix-notation}. Then we construct $\mathcal{E}(g,k)$ in \S\ref{subsec:constructing-egk}. We then show Theorem \ref{thm:nah} for $k$ even in \S\ref{subsubsec:even}, and for $k$ odd in \S\ref{subsubsec:odd}.
\subsection{Notation and preliminary bundless}\label{subsec:appendix-notation}
Recall that $\pi:\mathcal{V}_g\to\Teich_g$ is the universal curve, and that $\Sym^n\pi:\Sym^n\Curve_g\to\Teich_g$ is the holomorphic fibration over $\Teich_g$ whose fibre over $S\in\Teich_g$ is $\Sym^n S$.
\par Let $\chi_g$ be the space of representations $\pi_1(\Sigma_g)\to\mathrm{PSL}(2,\mathbb{R})$. For any representation $\rho\in\chi_g$, define its \textit{Euler class} $\mathrm{eu}(\rho)$ to be the Euler number of the flat $\mathbb{RP}^1$ bundle associated to $\rho$ (via the natural action of $\mathrm{PSL}(2,\mathbb{R})$ on $\mathbb{RP}^1$) \cite[\S 10, pp. 117]{Hitchin1987}. Then the Euler class completely classifies the connected components of $\chi_{g,k}$. 
\par We will make use of the fibred product 
\[
    \begin{tikzcd}
        \Sym^n\Curve_g\times_{\Teich_g}\Curve_g \arrow[r, "\rm{pr}_2"]\arrow[d, "\rm{pr}_1"] & \Curve_g \arrow[d, "\pi"]\\
        \Sym^n\Curve_g\arrow[r, "\Sym^n\pi"] & \Teich_g
    \end{tikzcd}
\]
Recall that the fibre of $\Sym^n\Curve_g\times_{\Teich_g}\Curve_g$ over a point $S\in\Teich_g$ is the product $\Sym^n S\times S$. Let $\mathcal{U}_n$ be the tautological line bundle over $\Sym^n\Curve_g\times_{\Teich_g}\Curve_g$ that restricts to $\mathcal{O}(D)$ over the section of $\Sym^n S\times S$ that corresponds $D\in\Sym^n S$, i.e. to the effective degree $n$ divisor $D$ on $S$. Let $K_\pi$ be the relative canonical bundle of $\pi$, i.e. the bundle over $\Curve_g$ that restricts to $K_S$ over the fibre of $\pi$ that corresponds to $S\in\Teich_g$.
\subsection{Constructing $\mathcal{E}(g,k)$}\label{subsec:constructing-egk}We construct $\mathcal{E}(g,k)$ as a holomorphic vector bundle over $\Sym^{2g-2-k}\Curve_g$.  Consider the bundle $\mathcal{U}_{2g-2-k}^{-1}\otimes \mathrm{pr}_2^*K_\pi^2\to\Sym^n\Curve_g\times_{\Teich_g}\Curve_g$. This is a holomorphic vector bundle that resticts over $\{D\}\times S\subset\Sym^{2g-2-k} S\times S=(\pi\circ\mathrm{pr}_2)^{-1}(S)$ to the line bundle $K_S^2(-D)$. Let $\mathcal{E}(g,k)$ be the pushforward of this bundle $\mathcal{U}_{2g-2-k}^{-1}\otimes \mathrm{pr}_2^*K_\pi^2$ via the map $\mathrm{pr}_1:\Sym^{2g-2-k}\Curve_g\times_{\Teich_g}\Curve_g\to\Sym^{2g-2-k}\Curve_g$. 
\par Thus the fibre of $\mathcal{E}(g,k)$ over $D\in\Sym^{2g-2-k} S$ for $S\in\Teich_g$, is naturally isomorphic to the sections of $K_S^2(-D)$. Here in the notation of \S\ref{subsec:nah}, $D$ represents the divisor of $\delta$, and the fibre $\mathcal{E}(g,k)_S=H^0(K_S^2(-D))$ represents the space of holomorphic quadratic differentials that vanish along $D$.
\subsection{Even Euler class}\label{subsubsec:even}
Here we show Theorem \ref{thm:nah} for even $k$. We will define a map $\mathrm{T}:\mathcal{E}(g,k)\to\Teich_g\times\chi_{g,k}$ in two steps: we first define $\mathrm{T}$ locally as a smooth diffeomorphism, then we show that the different local definitions agree. 
\par \textbf{Step 0. Local choices. } Therefore we fix a point $(X_0, D_0, \Phi_0)$ in (the total space of) $\mathcal{E}(g,k)$, that consists of 
\begin{enumerate}
    \item a marked Riemann surface $X_0\in\Teich_g$, 
    \item an effective degree $2g-2-k$ divisor $D_0$ on $X_0$, and 
    \item a holomorphic section $\Phi_0\in H^0(K_{X_0}^2(-D_0))$.
\end{enumerate}
Let $U$ be a small neighbourhood of this point in $\mathcal{E}(g,k)$. We will freely shrink $U$ without mention throughout the proof if necessary. 
\par Let $\mathcal{U}_{2g-2-k}^{1/2}$ be a square root of $\mathcal{U}_{2g-2-k}$ over \begin{align*}
    V&=\mathrm{pr}_1^{-1}(\{(D, X): (X,D,\Phi)\in U\text{ for some }\Phi\})\\
    &=\{(D, x):(X, D, \Phi)\in U\text{ for some }\Phi\text{, and }x\in X\}\subset\Sym^{2g-2-k}\Curve_g\times_{\Teich_g}\Curve_g,
\end{align*}
which exists since $k$ is even. We similarly pick a consistent square root $K_X^{1/2}$ of $K_X$ over $V$. 
\par \textbf{Step 1. Definition of $\mathrm{T}$. } Given $(X, D, \Phi)\in U$, consider the Higgs bundle $E=L\oplus L^{-1}$ over $X$, where $L=\left.\mathcal{U}_{2g-2-k}^{-1/2}\right|_{\{D\}\times X}\otimes K_X^{1/2}$ with Higgs field 
\begin{align*}
    \phi=\begin{pmatrix}
        & \Phi\sigma^{-1} \\ \sigma & 
    \end{pmatrix},
\end{align*}
where $\sigma$ is an arbitrary section of $K_XL^{-2}\cong \mathcal{O}(D)$ with divisor $D$. Note that there is ambiguity here in the scaling of $\sigma$, but all such Higgs bundles are gauge equivalent by \cite[Proposition (10.2)]{Hitchin1987}. By \cite[Proposition (10.2)]{Hitchin1987}, $(E,\phi)$ is stable, the harmonic metric on $(E,\phi)$ is diagonal with respect to the splitting $E=L\oplus L^{-1}$, and the associated flat bundle has holonomy $\rho:\pi_1(\Sigma_g)\to\mathrm{SL}(2,\mathbb{R})$ which projects to $\chi_{g,k}$. We set $\mathrm{T}(X, D, \Phi)=(X, \rho)$. Note that once we show that $\mathrm{T}$ is a diffeomorphism, the description of the zero section follows immediately since $\mathrm{tr}\left(\phi^2\right)=2\Phi$ is the Hopf differential of the $\rho$-equivariant harmonic map (see \cite[\S 5.2]{Li2019})
\par \textbf{Step 2. $\mathrm{T}$ is well-defined. } We need to show that $\mathrm{T}$ does not depend on the local choices of square roots of $\mathcal{U}_{2g-2-k}$ and $K$. Fix a triple $(X,D,\Phi)\in U$, and let $L$ be as above. Let $\tilde{L}$ be a line bundle obtained as above with a different choice of square roots of $\mathcal{U}_{2g-2-k},K_X$, and let $\rho, \tilde{\rho}:\pi_1(\Sigma_g)\to\mathrm{SL}(2,\mathbb{R})$ be the respective representations. Then $\tilde{L}=L\xi$, where $\xi$ is a line bundle such that $\xi^{\otimes 2}\cong \mathcal{O}_X$. In particular the Higgs bundle with this choice $\tilde{L}$ is 
\begin{align*}
    \tilde{E}=\tilde{L}\oplus\tilde{L}^{-1}=\xi\otimes\left(L\oplus L^{-1}\right)=\xi\otimes E.
\end{align*}
In particular, the projective bundles $\mathbb{P}(E)$ and $\mathbb{P}(\tilde{E})$ coincide. Thus $(E,\phi)$ and $(\tilde{E},\tilde{\phi})$ agree as holomorphic $\mathrm{PSL}(2,\mathbb{C})$-Higgs bundles, so $q\circ\rho=q\circ\tilde{\rho}$, where $q:\mathrm{SL}(2,\mathbb{R})\to\mathrm{PSL}(2,\mathbb{R})$ is the natural quotient map.  
\par \textbf{Step 3. $\mathrm{T}$ is smooth. } It suffices to show that the harmonic metric on $L$ depends smoothly on $(X,D,\Phi)$. Note that $L$ is a line bundle over $V$. We can also choose $\sigma$ to be a holomorphic section of $KL^{-2}$ over $V$. Let $\ell_0$ be an arbitrary background Hermitian metric on $L$, and let $h\in C^\infty(T^*\Curve_g^{\otimes 2})$ that restricts to the hyperbolic metric on the fibres of $\pi:\Curve_g\to\Teich_g$.
\par The Hitchin equation for the diagonal metric on $E=L\oplus L^{-1}$ for $(E,\phi)$ over $X\in\Teich_g$ is 
\begin{align*}
    i\Lambda_\omega \Theta_\ell+\norm{\Phi\sigma^{-1}}_{\ell,h}^2-\norm{\sigma}^2_{\ell,h}=0,
\end{align*}
where $\omega$ is the volume form, and $\ell$ is the metric on $L$ \cite[\S 2.3, equation (2.5)]{Biswas2021}. If we write $\ell=e^{\phi}\ell_0$, then the equation becomes
\begin{align}\label{eq:hitchin-decomp}
    i\Lambda_\omega\Theta_{\ell_0}+\Delta_g \phi-e^{2\phi}\norm{\Phi\sigma^{-1}}_{\ell_0,h}^2+e^{-2\phi}\norm{\sigma}_{\ell_0,h}^2=0.
\end{align}
This equation is analogous to \cite[\S 2.3, equation (2.6)]{Biswas2021}. We know by the non-abelian Hodge theorem that (\ref{eq:hitchin-decomp}) admits a unique solution for any $(X,D,\Phi)\in U$.  The following claim shows that this solution depends smoothly on the data $\norm{\Phi\sigma^{-1}}_{\ell_0}^2$ and $\norm{\sigma}_{\ell_0}^2$, that in turn depend smoothly on $(X, D, \Phi)\in U$. Thus $\mathrm{T}$ is smooth. 
\begin{claim}\label{claim:smooth}
    Let $\Sigma$ be a closed surface. Suppose that $g_0$ is a Riemannian metric on $\Sigma$, and that $\lambda_0, \mu_0, \phi_0:\Sigma\to\mathbb{R}$ are smooth functions, such that at least one of $\lambda_0,\mu_0$ does not vanish identically. Then there exists a neighbourhood $U$ of $(g_0, \lambda_0, \mu_0)\in C^\infty(\Sym^2T^*\Sigma)\times C^\infty(\Sigma)\times C^\infty(\Sigma)$ such that, for any $(g, \lambda,\mu)\in U$, there exists a smooth function $\phi$ solving the equation 
    \begin{align}\label{eq:claim-smooth}
        \Delta_g\phi+\mu^2 e^{-2\phi}-\lambda^2 e^{2\phi}=\Delta_{g_0}\phi_0+\mu_0^2 e^{-2\phi_0}-\lambda_0^2e^{2\phi_0},
    \end{align}
    where $\Delta_g$ is the Laplacian with respect to the Riemannian metric $g$. Moreover, this solution depends smoothly on $(g,\lambda,\mu)$.
\end{claim}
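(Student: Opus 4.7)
\textbf{Proof plan for Claim \ref{claim:smooth}.} The plan is to treat the equation as a parametric family of semilinear elliptic PDEs and apply the Banach-space implicit function theorem, using the monotonicity provided by the sign of the exponentials to obtain invertibility of the linearization.

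More precisely, fix a H\"older exponent $\alpha\in(0,1)$ and consider, on a fixed neighbourhood $\mathcal{U}$ of $(g_0,\lambda_0,\mu_0)$ in $C^{k,\alpha}(\mathrm{Sym}^2T^*\Sigma)\times C^{k,\alpha}(\Sigma)\times C^{k,\alpha}(\Sigma)$, the map
\begin{align*}
    F:C^{k+2,\alpha}(\Sigma)\times\mathcal{U}\to C^{k,\alpha}(\Sigma),\qquad F(\phi,g,\lambda,\mu)=\Delta_g\phi+\mu^2e^{-2\phi}-\lambda^2e^{2\phi}.
\end{align*}
Smoothness of $F$ as a map between Banach manifolds is a routine consequence of smoothness of the Nemytskii operators $\phi\mapsto e^{\pm 2\phi}$ on H\"older spaces and of the polynomial dependence of $\Delta_g$ on the metric in local coordinates. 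By construction, $F(\phi_0,g_0,\lambda_0,\mu_0)$ equals the right-hand side of~(\ref{eq:claim-smooth}) evaluated at the base point, so we seek solutions of $F(\phi,g,\lambda,\mu)=F(\phi_0,g_0,\lambda_0,\mu_0)$.

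The linearisation in $\phi$ at the base point is
\begin{align*}
    L_0\psi=\Delta_{g_0}\psi-V_0\psi,\qquad V_0=2\mu_0^2e^{-2\phi_0}+2\lambda_0^2e^{2\phi_0},
\end{align*}
which is a formally self-adjoint, second-order elliptic operator from $C^{k+2,\alpha}(\Sigma)$ to $C^{k,\alpha}(\Sigma)$, and hence Fredholm of index zero. The potential $V_0$ is smooth, non-negative and, by hypothesis on at least one of $\lambda_0,\mu_0$, not identically zero. Integration by parts against $\psi$ then yields (with the sign convention in which $\int\psi\Delta_{g_0}\psi=-\int|\nabla\psi|^2$)
\begin{align*}
    \int_\Sigma\psi\,L_0\psi\,d\mathrm{vol}_{g_0}=-\int_\Sigma|\nabla\psi|^2d\mathrm{vol}_{g_0}-\int_\Sigma V_0\psi^2d\mathrm{vol}_{g_0}\le 0,
\end{align*}
with equality forcing $\psi$ constant and $V_0\psi^2\equiv0$, hence $\psi\equiv 0$. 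Thus $\ker L_0=0$, and Fredholm of index zero gives that $L_0$ is an isomorphism. (If the opposite sign convention is in force, the same argument works with an overall sign flip; the key input is that $V_0\ge 0$ and $V_0\not\equiv 0$ so that the operator is definite after integration by parts.)

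By the implicit function theorem on Banach spaces, there exists a neighbourhood $U$ of $(g_0,\lambda_0,\mu_0)$ and a $C^\infty$ map $(g,\lambda,\mu)\mapsto\phi(g,\lambda,\mu)\in C^{k+2,\alpha}(\Sigma)$ with $\phi(g_0,\lambda_0,\mu_0)=\phi_0$ solving~(\ref{eq:claim-smooth}). Applying this for every $k\ge 0$ and invoking uniqueness of the solution gives that the same $\phi$ lies in $C^{k+2,\alpha}$ for all $k$, i.e.\ is smooth; standard elliptic bootstrap applied to $F(\phi,g,\lambda,\mu)=\mathrm{RHS}$ confirms this. The map $(g,\lambda,\mu)\mapsto\phi$ into $C^\infty(\Sigma)$, equipped with its Fr\'echet topology, is then smooth in the Banach sense for each finite order of regularity, proving the claim. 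The only subtle point is the invertibility of $L_0$; once that is established, everything else is a standard application of the implicit function theorem and elliptic regularity.
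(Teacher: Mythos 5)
Your proof is correct and follows the same overall strategy as the paper: linearize in $\phi$ and apply the implicit function theorem once the linearization is shown to be an isomorphism. Two small differences are worth noting. First, the paper establishes injectivity of the linearized operator via the strong maximum principle, while you use the variational argument (integration by parts, then kernel is trivial by definiteness); both are standard and correct, and both rely on the fact that the potential $2\lambda^2e^{2\phi}+2\mu^2e^{-2\phi}$ is nonnegative and not identically zero. Second, the paper invokes the implicit function theorem directly on $C^\infty(\Sigma)$, which is a Fr\'echet space where the ordinary implicit function theorem does not literally apply; your setup on $C^{k,\alpha}$ H\"older spaces followed by elliptic bootstrap and a uniqueness argument to patch the different $k$ together is the rigorous way to carry out the intended argument, and is a genuine improvement in the level of detail.
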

\begin{proof}
    Consider the map $U\times C^\infty(\Sigma)\to C^\infty(\Sigma)$ given by 
    \begin{align*}
        (g,\lambda,\mu,\phi)\longrightarrow\Delta_g \phi+\mu^2 e^{-2\phi}-\lambda^2 e^{2\phi}.
    \end{align*}
    This is a smooth map with derivative in the $\phi$-direction given by 
    \begin{align*}
        F[\dot{\phi}]=\Delta_g\dot{\phi}-\left(\lambda^2 e^{2\phi}+\mu^2 e^{-2\phi}\right)\dot{\phi},
    \end{align*}
    which is an elliptic self-adjoint operator. Note that $F$ is injective by the strong maximum principle, and thus also surjective. Hence $F$ is an isomorphism, and the result follows by the implicit function theorem.
\end{proof}
\par \textbf{Step 4. $\mathrm{T}$ is a diffeomorphism. } The fact that $\mathrm{T}$ is smooth implies that $\mathrm{T}$ is a diffeomorphism from the implicit function theorem, in the version stated below. 
\begin{claim}
    Let $\mathrm{T}:P_1\to P_2$ be a smooth map between smooth fibrations $p_i:P_i\to X$ over a manifold $X$, i.e. a smooth map making the following diagram commute
    \[\begin{tikzcd}
        P_1 \arrow[rr, "\mathrm{T}"]\arrow[dr, "p_1"] && P_2\arrow[dl, "p_2"]\\
        & X &
    \end{tikzcd}\]
    Then if $\mathrm{T}:p_1^{-1}(x)\to p_2^{-1}(x)$ is a diffeomorphism for all $x\in X$, then $\mathrm{T}$ is a diffeomorphism between total spaces $P_1\to P_2$.
\end{claim}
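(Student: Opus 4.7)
The plan is to observe that $\mathrm{T}$ is automatically a bijection, since every point of $P_i$ lies in some fiber $p_i^{-1}(x)$ and $\mathrm{T}$ restricts to a bijection between corresponding fibers. Thus it suffices to show that $\mathrm{T}$ is a local diffeomorphism, for then $\mathrm{T}^{-1}$ is automatically smooth by the inverse function theorem. By the inverse function theorem, a local diffeomorphism criterion reduces to showing that $D\mathrm{T}_p:T_pP_1\to T_{\mathrm{T}(p)}P_2$ is a linear isomorphism for every $p\in P_1$.

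To verify this, fix $p\in P_1$ and set $x=p_1(p)$, $q=\mathrm{T}(p)$, so that $p_2(q)=x$. Since $p_1$ and $p_2$ are smooth fibrations, they are in particular submersions, so we obtain short exact sequences
\begin{equation*}
0\longrightarrow T_p(p_1^{-1}(x))\longrightarrow T_pP_1\stackrel{Dp_1}{\longrightarrow} T_xX\longrightarrow 0,
\end{equation*}
\begin{equation*}
0\longrightarrow T_q(p_2^{-1}(x))\longrightarrow T_qP_2\stackrel{Dp_2}{\longrightarrow} T_xX\longrightarrow 0.
\end{equation*}
Differentiating the identity $p_2\circ\mathrm{T}=p_1$ gives $Dp_2\circ D\mathrm{T}_p=Dp_1$, so $D\mathrm{T}_p$ descends to the identity on $T_xX$ and carries $T_p(p_1^{-1}(x))$ into $T_q(p_2^{-1}(x))$. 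On these vertical subspaces, $D\mathrm{T}_p$ restricts to the derivative at $p$ of the fiber map $\mathrm{T}|_{p_1^{-1}(x)}:p_1^{-1}(x)\to p_2^{-1}(x)$, which is an isomorphism by hypothesis. A direct application of the five lemma to the commuting diagram of the two sequences above, with identity on the base and an isomorphism on the vertical parts, yields that $D\mathrm{T}_p$ is an isomorphism.

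Since $p$ was arbitrary, $D\mathrm{T}$ is everywhere an isomorphism, so $\mathrm{T}$ is a local diffeomorphism; combined with bijectivity, this shows that $\mathrm{T}$ is a global diffeomorphism. There is no real obstacle here: the only thing to be careful about is confirming that the fibrations $p_i$ are submersions (so the vertical subspaces are indeed the kernels of $Dp_i$) and that $P_1, P_2$ have equal dimension (which follows from the fiber-wise diffeomorphism hypothesis together with the shared base $X$). Everything else is straightforward linear algebra and a standard application of the inverse function theorem.
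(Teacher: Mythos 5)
Your proof is correct and follows essentially the same strategy as the paper: both observe that $\mathrm{T}$ is a bijection and reduce to showing it is a local diffeomorphism via the inverse/implicit function theorem. The only cosmetic difference is that you show $D\mathrm{T}_p$ is an isomorphism coordinate-freely via the five lemma applied to the vertical/horizontal short exact sequences, whereas the paper picks fibration-adapted local coordinates $(x,y)\mapsto(x,F(x,y))$ and builds the local inverse directly from the implicit function theorem.
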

\begin{proof}
    It suffices to show that $T$ is a local diffeomorphism since it is clearly a bijection. We pick local coordinates $(x, y)$ of $P_1$, $(x, z)$ of $P_2$, and $x$ on $X$, such that $p_1$ takes the form $(x,y)\to x$ and $p_2$ takes the form $(x,z)\to x$. Here $x\in\mathbb{R}^{\dim X}, y,z\in\mathbb{R}^{\dim P_i-\dim X}$. We represent $\mathrm{T}$ locally as a map $(x, y)\to (x, F(x, y))$. Then $F(x,-)$ is a local diffeomorphism, so in particular $\frac{\partial F}{\partial y}$ is an isomorphism. Therefore by the implicit function theorem, there is a smooth map $G(x, z)$ such that $F(x, G(x, z))=z$. Then $\mathrm{T}$ has a local smooth inverse $\mathrm{id}_X\times G$. 
\end{proof}
 \subsection{Odd Euler class}\label{subsubsec:odd} We now show Theorem \ref{thm:nah} for odd $k$. Let $p:\Sigma_{2g-1}\to\Sigma_g$ be an arbitrary unramified double cover. Given $(X,D, \Phi)\in\mathcal{E}(g, k)$, by lifting everything via $p$, we get an element $(\hat{X}, \hat{D}, \hat{\Phi})\in \mathcal{E}(2g-1, 2k)$. The even Euler class case from \S\ref{subsubsec:even} then defines a map 
 \begin{align*}
    \hat{\mathrm{T}}:\mathcal{E}(g,k)&\longrightarrow \Teich_{2g-1}\times\chi_{g,2k}\\
    (X, D, \Phi)&\longrightarrow \mathrm{T}(\hat{X}, \hat{D}, \hat{\Phi})=(\hat{X}, \hat{\rho}).
 \end{align*}
 Let $(E,\phi)$ be as in \S\ref{subsubsec:even}. Then by \cite[\S 4]{Biswas2021}, the $\mathrm{PSL}(2,\mathbb{C})$-Higgs bundle $(\mathbb{P}(E), \phi)$ over $\hat{X}$ descends to a $\mathrm{PSL}(2,\mathbb{C})$-Higgs bundle on $X$. From the uniqueness of the solutions to the Hitchin equations, we see that the harmonic metric over $\hat{X}$ must be a lift of the harmonic metric on $X$. Therefore $\hat{\rho}:\pi_1(\Sigma_{2g-1})\to\mathrm{PSL}(2,\mathbb{R})$ can be uniquely extended to a representation $\rho:\pi_1(\Sigma_g)\to\mathrm{PSL}(2,\mathbb{R})$ such that $\rho\circ p_*=\tilde{\rho}$. It follows that $\hat{\mathrm{T}}$ descends to a smooth map $\mathcal{E}(g,k)\to\Teich_g\times\chi_{g,k}$. The fact that this map is a diffeomorphism follows as in Step 4 in \S\ref{subsubsec:even}.
\bibliographystyle{amsplain}
\bibliography{main}
\end{document}